\title{Gelfand-Tsetlin degeneration of shift of argument subalgebras in types B and C}
\author{Leonid Rybnikov and Mikhail Zavalin}
\begin{document}

\newtheorem{fact}{Fact}
\newtheorem{predl}{Theorem}
\newtheorem{prop}{Proposition}
\newtheorem{lemma}{Lemma}
\newtheorem{conj}{Conjecture}
\newtheorem{cor}{Corollary}
\newtheorem{rem}{Remark}
\newtheorem{defn}{Definition}

\maketitle
\begin{center}
{\it To our teacher Rafail Kalmanovich Gordin}
\end{center}

\medskip

\begin{abstract} The universal enveloping algebra of any semisimple Lie algebra $\mathfrak{g}$ contains a family of maximal commutative subalgebras, called shift of argument subalgebras, parametrized by regular Cartan elements of $\mathfrak{g}$. For $\mathfrak{g}=\mathfrak{gl}_n$ the Gelfand-Tsetlin commutative subalgebra in $U(\mathfrak{g})$ arises as some limit of subalgebras from this family. We study the analogous limit of shift of argument subalgebras for the Lie algebras $\mathfrak{g}=\mathfrak{sp}_{2n}$ and $\mathfrak{g}=\mathfrak{so}_{2n+1}$. The limit subalgebra is described explicitly in terms of Bethe subalgebras in twisted Yangians $Y^-(2)$ and $Y^+(2)$, respectively. We index the eigenbasis of such limit subalgebra in any irreducible finite-dimensional representation of $\mathfrak{g}$ by Gelfand-Tsetlin patterns of the corresponding type, and conjecture that this indexing is, in appropriate sense, natural. According to \cite{KR} such eigenbasis has a natural $\mathfrak{g}$-crystal structure. We conjecture that this crystal structure coincides with that on Gelfand-Tsetlin patterns defined by Littelmann in \cite{L}.  
\end{abstract}

\tableofcontents
\newpage

\addcontentsline{toc}{section}{Introduction}
\section{Introduction}
\subsection{Maximal commutative subalgebras in $S(\mathfrak{g})$ and $U(\mathfrak{g})$.}

\textit{Shift of argument subalgerbas} form a family of maximal Poisson-commutative subalgebras of the Poisson algebra $S(\mathfrak{g})$ of a semisimple Lie algebra $\mathfrak{g}$. These subalgebras are parametrized by regular elements $\mu\in\mathfrak{g}^*$. More precisely, for any $\mu\in\mathfrak{g}^*$ all partial derivatives of $\mathfrak{g}$-invariants in $S(\mathfrak{g})$ along $\mu$ generate a Poisson-commutative subalgebra $A_{\mu}\subset S(\mathfrak{g})$. For regular $\mu\in\mathfrak{g}^*$ the subalgebra $A_\mu$ is known to be a polynomial algebra in $\frac{1}{2}(\dim\mathfrak{g}+\text{rk}\mathfrak{g})$ generators (hence having maximal possible transcendence degree).  These subalgebras were first introduced by Mishchenko and Fomenko in \cite{MF} and are also known as \emph{Mishchenko-Fomenko subalgebras}. 

We fix an invariant scalar product on $\mathfrak{g}$ (thus having $\mathfrak{g}=\mathfrak{g}^*$) and a Cartan decomposition $\mathfrak{g}=\mathfrak{n}_+\oplus\mathfrak{h}\oplus\mathfrak{n}_-$. From now on we assume that the parameter $\mu$ is a regular element from $\mathfrak{h}\subset\mathfrak{g}=\mathfrak{g}^*$. Note that the subalgebra $A_\mu\subset S(\mathfrak{g})$ does not change under dilations of $\mu$, so the parameter space for the shift of argument subalgebras with $\mu\in\mathfrak{h}$ is the projectivization of the set of regular Cartan elements, $\mathbb{P}(\mathfrak{h})^{reg}$. 

The space $\mathbb{P}(\mathfrak{h})^{reg}$, which parametrizes the family $A_\mu$, is noncompact. Following \cite{Sh,V1} we extend this family of subalgebras to some compactification of $\mathbb{P}(\mathfrak{h})^{reg}$. Namely, one can consider \emph{limit} shift of argument subalgebras obtained as the $A_{\mu(\varepsilon)}$ as $\varepsilon\to0$ where element $\mu(\varepsilon)\in\mathfrak{g}$ is regular and semisimple for sufficiently small values of $\varepsilon$. These subalgebras are also known to be polynomial algebras in $\frac{1}{2}(\dim\mathfrak{g}+\text{rk}\ \mathfrak{g})$ generators, see \cite{Sh}. In \cite{KR} the parameter space for all possible limit subalgebras was identified with the De Concini-Procesi wonderful closure of $\mathbb{P}(\mathfrak{h})^{reg}$ (regarded as the complement of the root hyperplane arrangement).

In \cite{V1} Vinberg raised the problem of quantization of subalgebras $A_{\mu}$, i.e. lifting them to commutative subalgebras $\mathcal{A}_\mu$ in the universal enveloping algebra $U(\mathfrak{g})$ such that $\text{gr}\ \mathcal{A}_\mu=A_{\mu}$. In \cite{R} this problem was solved for regular $\mu$. In \cite{R2} it was shown that this lifting is unique for generic $\mu$. Moreover, according to \cite{KR} this lifting extends uniquely to the limit subalgebras. 

\subsection{Gelfand-Tsetlin limit of shift of argument subalgebras for $\mathfrak{g}=\mathfrak{gl}_n$.}

Let $\mathfrak{g}=\mathfrak{gl}_n$ and let $E_{ij}$ ($i,j=1,...,n$) denote matrix units of $\mathfrak{gl}_n$. In \cite{V1} Vinberg observed that the limit shift of argument subalgebra $\lim\limits_{\varepsilon\to0}A_{\mu(\varepsilon)}$ for $\mu(\varepsilon)=E_{nn}+E_{n-1,n-1}\varepsilon+...+E_{11}\varepsilon^{n-1}$ is the associated graded of the \textit{Gelfand-Tsetlin subalgebra} $\mathcal{G}_n$ of $U(\mathfrak{gl}_n)$. This subalgebra has simple spectrum in any irreducible $\mathfrak{gl}_n$-module and is diagonalizable in \textit{Gelfand-Tsetlin basis}. Here we describe these objects following \cite{M}.

For irreducible $\mathfrak{gl}_n$-module $V_\lambda$ with the highest weight $\lambda=(\lambda_1,\lambda_2,...,\lambda_n)$ its restriction to $\mathfrak{gl}_{n-1}$ is isomorphic to the direct sum of $V'_\mu$ over all $\mathfrak{gl}_{n-1}$-weights $\mu$, satisfying $\lambda_i-\mu_i\in\mathbb{Z}_+$ and $\mu_i-\lambda_{i+1}\in\mathbb{Z}_{+}$. Iterating this restriction for the chain of embedded Lie subalgebras $\mathfrak{gl}_n\supset\mathfrak{gl}_{n-1}\supset\ldots\supset\mathfrak{gl}_1$ we get a basis of $V_\lambda$, indexed by \textit{Gelfand-Tsetlin patterns} $\Lambda$
$$\begin{matrix}
\lambda_{n1} &\lambda_{n2} &&& \ldots &&& \lambda_{nn}\\
&\lambda_{n-1,1} & \lambda_{n-1,2} && \ldots && \lambda_{n-1,n-1} &&\\
&&\ldots && \ldots && \ldots &&\\
&&&\lambda_{21} && \lambda_{22} &&\\
&&&&\lambda_{11} &&&&\\
\end{matrix}$$
with $\lambda_i$ satisfying conditions $\lambda_{ij}-\lambda_{i-1,j}\in\mathbb{Z}_+$
and $\lambda_{i-1,j}-\lambda_{i,j+1}\in\mathbb{Z}_+$. This basis $\xi_{\Lambda}$ is called the \textit{Gelfand-Tsetlin basis} of $V_\lambda$. 

%Coefficients of the \textit{Capelli determinant}
%$$\mathcal{C}(u)=\sum_{\sigma\in\mathfrak{S}_n}sign(\sigma)\cdot(u+E)_{\sigma(1),1}\cdot\cdot\cdot(u+E-n+1)_{\sigma(n),n}$$
%where $E=(E_{ij})$, generate the center of $U(\mathfrak{gl}_n)$. For $\mathfrak{gl}_m\subset\mathfrak{gl}_n$ the coefficients of $$\mathcal{C}_m(u)=u^m+c_{m1}u^{m-1}+...+c_{mm}$$ for $1\leq i\leq m\leq n$ generate commutative subalgebra $\mathcal{G}_n$ in $U(\mathfrak{gl}_n)$ called \textit{Gelfand-Tsetlin subalgebra}. For these generators basis vectors $\xi_{\Lambda}$ are eigenvectors and the action of the generators on Gelfand-Tsetlin vectors is given by
%$$c_{mi}\xi_{\Lambda}=\alpha_{mi}(\Lambda)\xi_{\Lambda},$$
%where $\alpha_{mi}$ - i-th elementary symmetric polynomial in variables $l_{m1},...,l_{mm}$ (where $l_{mi}:=\lambda_i-i+1$).
The Gelfand-Tsetlin subalgebra $\mathcal{G}_n\subset U(\mathfrak{gl}_n)$ is generated by the centers of $U(\mathfrak{gl}_k)$, $k=1,\ldots,n$. Clearly all elements from $\mathcal{G}_n$ are diagonal in the Gelfand-Tsetlin basis. The eigenvalues of central generators of $U(\mathfrak{gl}_k)$ are shifted elementary symmetric functions of the elements of the $k$-th row in the Gelfand-Tsetlin pattern described above. In particular, the joint eigenvalues on different elements of the Gelfand-Tsetlin basis are different. So $\mathcal{G}_n$ has simple spectrum in any irreducible representation of $\mathfrak{gl}_n$.

\subsection{The cases of $\mathfrak{g}=\mathfrak{sp}_{2n}$ and $\mathfrak{g}=\mathfrak{o}_{2n+1}$.}

Recall that for $N=2n$ the following elements of $\mathfrak{gl}_N$
$$F_{ij}:=E_{ij}-\theta_{ij}E_{-j,-i},$$
where $i,j\in\{-n,...,-1,1,...,n\}$ for $\theta_{ij}:=\textrm{sgn}(i)\textrm{sgn}(j)$ span the subalgebra $\mathfrak{g}_n=\mathfrak{sp}_{2n}=\mathfrak{sp}_N\subset\mathfrak{gl}_N$. For $N=2n+1$, the above elements with $i,j\in\{-n,...,-1,0,1,...,n\}$ and $\theta_{ij}:=1$ span the subalgebra $\mathfrak{g}_n=\mathfrak{o}_{2n+1}=\mathfrak{o}_N\subset\mathfrak{gl}_N$.

In this paper we describe the limit of the quantum shift of argument subalgebra, $\lim\limits_{\varepsilon\to0}\mathcal{A}_{\mu(\varepsilon)}$ for $\mu(\varepsilon)=F_{nn}+F_{n-1,n-1}\varepsilon+...+F_{11}\varepsilon^{n-1}$. As in the $\mathfrak{gl}_n$ case, we consider the chain of Lie subalgebras $\mathfrak{g}_{n}\supset \mathfrak{g}_{n-1}\supset\ldots\supset \mathfrak{g}_{1}$ embedded in the standard way (i.e. $\mathfrak{g}_k$ is generated by $F_{ij}$ with $-k\le i,j\le k$). The problem is that contrary to the case of $\mathfrak{gl}_n$, the centers of all $U(\mathfrak{g}_{k})$ from this chain generate a smaller subalgebra than is expected from the limiting procedure. In particular, the joint spectrum of such centers is not simple. It turns out that in both symplectic and orthogonal cases we have some additional generators of this limit subalgebra. To describe them, we consider the \emph{twisted Yangian} $\textrm{Y}^{-}(2)$ (resp. $\textrm{Y}^{+}(2)$) and its commutative Bethe subalgebra $\mathcal{B}^{-}$ (resp. $\mathcal{B}^{+}$) for the case of $\mathfrak{sp}_{2n}$ (resp. $\mathfrak{o}_{2n+1}$). By \cite{MO}, for both symplectic and orthogonal cases, there is a homomorphism $$\varphi_k:  \textrm{Y}^{\mp}(2)\to  U(\mathfrak{g}_{k})^{\mathfrak{g}_{k-1}},$$ which is surjective modulo the center of $U(\mathfrak{g}_{k})$. We denote by $\mathcal{A}^{\mp}_k$ the subalgebra in $U(\mathfrak{g}_{k})^{\mathfrak{g}_{k-1}}$ generated by the image of commutative subalgebra $\mathcal{B}^{\mp}$ under this homomorphism and the center of $U(\mathfrak{g}_{k})$. The subalgebra $\mathcal{A}^{\mp}\subset U(\mathfrak{g}_n)$ generated by the union $\cup_{k=1}^n\mathcal{A}^{\mp}_k$ is a commutative subalgebra of $U(\mathfrak{g}_n)$. The first main result of the present paper is the following
%Consider the lift of limit Mischenko-Fomenko commutative subalgebras provided by the work \cite{KR} of J. Kamnitzer, L. Rybnikov, et al.

\medskip

\noindent\textbf{Theorem A.} \textit{The limit of the quantum shift of argument subalgebras $\lim\limits_{\varepsilon\to0}\mathcal{A}_{\mu(\varepsilon)}$ for $\mu(\varepsilon)=F_{nn}+F_{n-1,n-1}\varepsilon+...+F_{11}\varepsilon^{n-1}$ in the universal enveloping algebra $U(\mathfrak{g}_n)$ coincides with $\mathcal{A}^{-}$ for $\mathfrak{g}_n=\mathfrak{sp}_{2n}$ and with $\mathcal{A}^{+}$ for $\mathfrak{g}_n=\mathfrak{o}_{2n+1}$.}

\medskip

\begin{rem} \emph{The case of $\mathfrak{g}_n=\mathfrak{o}_{2n}$ seems harder due to Pfaffian-related reasons. We are going to consider this case separately in our future work.}
\end{rem}

\begin{rem} \emph{In the case of $\mathfrak{g}_n=\mathfrak{sp}_{2n}$ the subalgebra $\lim\limits_{\varepsilon\to0}\mathcal{A}_{\mu(\varepsilon)}$ was described by Molev and Yakimova in \cite{MY}. We show that $\mathcal{A^-}$ is the same subalgebra.}
\end{rem}

Now we want to describe the spectra of $\mathcal{A}^{\mp}$ in irreducible finite-dimensional representations of $\mathfrak{g}_n$. According to \cite{KR}, for any dominant integral highest weight $\lambda$ the spectrum of any limit shift of argument subalgebra in the corresponding representation $V_\lambda$ is simple. On the other hand, there are no explicit formulas for the joint eigenvalues of Bethe subalgebras in twisted Yangians known. The eigenvectors can be obtained by an appropriate version of Bethe ansatz method, see \cite{ansatz}. We do not address Bethe ansatz completeness problem in this paper. Rather, for the cases $\mathfrak{g}_n=\mathfrak{sp}_{2n}$ and $\mathfrak{g}_n=\mathfrak{o}_{2n+1}$ we describe some indexing of the eigenbasis of $\mathcal{A}^\mp$ in $V_\lambda$ by the analogs of Gelfand-Tsetlin patterns. This indexing depends on the choice of a path in a certain parameter space. We conjecture that this indexing is natural, i.e. does not in fact depend on any choices. 

\subsection{Gelfand-Tsetlin type patterns for Lie algebras $\mathfrak{sp}_{2n}$ and $\mathfrak{o}_{2n+1}$.}\label{ss:GTpatterns}
In \cite{M} Molev describes the uniform way to get a Gelfand-Tsetlin type bases for all classical Lie algebras, based on the representation theory of twisted Yangians.

For $\mathfrak{g}=\mathfrak{sp}_{2n}$ Gelfand-Tsetlin basis of irreducible finite-dimensional representation does not exist because the restriction $\mathfrak{sp}_{2n} \downarrow \mathfrak{sp}_{2n-2}$ is not multiplicity-free. Instead, there is an action of the twisted Yangian $\textrm{Y}^{-}(2)$ on $U(\mathfrak{sp}_{2n})^{\mathfrak{sp}_{2n-2}}$ through homomorphism $\varphi_n$ providing a $\textrm{Y}^-(2)$-module structure on each multiplicity space. The multiplicity spaces turn out to be irreducible highest weight $\textrm{Y}^-(2)$-modules thus leading to the generalization of the Gelfand-Tsetlin basis called \textit{weight basis}. The elements of the weight basis of an irreducible representation $V_\lambda$ of highest weight $\lambda$ are numbered by the following combinatorial objects, called \textit{C type patterns} $\Lambda$
$$\begin{matrix}
& \lambda_{n1} && \lambda_{n2} && \ldots && \lambda_{nn}\\
\lambda'_{n1} && \lambda'_{n2} && \ldots  && \lambda'_{nn}\\
& \lambda_{n-1,1} && \ldots && \lambda_{n-1,n-1}\\
\lambda'_{n-1,1} && \ldots && \lambda'_{n-1,n-1}\\
&& \ldots && \ldots\\
& \lambda_{11}\\
\lambda'_{11}\\
\end{matrix}$$
with $\lambda=(\lambda_{n1},...,\lambda_{nn})$ being the highest weight (which for $\mathfrak{sp}_{2n}$ satisfy $-\lambda_1\in\mathbb{Z}_{+}$ and $\lambda_i-\lambda_{i+1}\in\mathbb{Z}_+$) and the rest entries being non-positive integers, satisfying the following inequalities:
$$0\geq\lambda'_{k1}\geq\lambda_{k1}\geq\lambda'_{k2}\geq\lambda_{k2}\geq...\geq\lambda'_{k,k-1}\geq\lambda_{k,k-1}\geq\lambda'_{kk}\geq\lambda_{kk}$$
for $k=1,...,n$, and
$$0\geq\lambda'_{k1}\geq\lambda_{k-1,1}\geq\lambda'_{k2}\geq\lambda_{k-1,2}\geq...\geq\lambda'_{k,k-1}\geq\lambda_{k-1,k-1}\geq\lambda'_{kk}$$
for $k=2,...,n$.

In the orthogonal case we similarly have an irreducible highest weight $\textrm{Y}^+(2)$-module structure on multiplicity spaces of irreducible highest weight finite-dimensional representation $V_\lambda$ of $\mathfrak{o}_N$ restricted to $\mathfrak{o}_{N-2}$. 
%This allows us to introduce weight basis of irreducible representation $V(\lambda)$ corresponding to the combinatorial objects called \textit{D type pattern} $\Lambda$ for $\mathfrak{g}=\mathfrak{o}_{2n}$
%$$\begin{matrix}
%\lambda_{n1} && \lambda_{n2} && \ldots && \lambda_{nn}\\
%& \lambda'_{n1} && \ldots  && \lambda'_{n-1,n-1}\\
%\lambda_{n-1,1} && \ldots && \lambda_{n-1,n-1}\\
%& \ldots && \ldots && \\
%\lambda_{21} && \lambda_{22}\\
%& \lambda'_{11}\\
%\lambda_{11}
%\end{matrix}$$
%with $\lambda=(\lambda_{n1},...,\lambda_{nn})$ and all entries are simultaneously from $\mathbb{Z}_{\leq0}$ or $\{m+\frac{1}{2}|m\in\mathbb{Z},m+\frac{1}{2}\leq0\}$, satisfying the following set of inequalities
%$$-|\lambda_{k1}|\geq\lambda'_{k-1,1}\geq\lambda_{k2}\geq\lambda'_{k-1,2}\geq...\geq\lambda_{k,k-1}\leq\lambda'_{k-1,k-1}\geq\lambda_{kk},$$
%$$-|\lambda_{k-1,1}|\geq\lambda'_{k-1,1}\geq\lambda_{k-1,2}\geq\lambda'_{k-1,2}\geq...\geq\lambda_{k-1,k-1}\geq\lambda'_{k-1,k-1}$$
%when $k=2,...,n$.
For $\mathfrak{g}=\mathfrak{o}_{2n+1}$, this gives a weight basis of irreducible representation $V_\lambda$ with elements numbered by \textit{B type patterns} $\Lambda$
$$\begin{matrix}
\sigma_n && \lambda_{n1} && \lambda_{n2} && \ldots && \lambda_{nn}\\
& \lambda'_{n1} && \lambda'_{n2} && \ldots  && \lambda'_{nn}\\
\sigma_{n-1} && \lambda_{n-1,1} && \ldots && \lambda_{n-1,n-1}\\
& \lambda'_{n-1,1} && \ldots && \lambda'_{n-1,n-1}\\
& \ldots & \ldots && \ldots &\\
\sigma_1 && \lambda_{11}\\
& \lambda'_{11}
\end{matrix}$$
with $\lambda=(\lambda_{n1},...,\lambda_{nn})$,  $\sigma_i\in\{0,1\}$ and all $\lambda$-entries are simultaneously from $\mathbb{Z}_{\leq0}$ or $\{m+\frac{1}{2}|m\in\mathbb{Z},m+\frac{1}{2}\leq0\}$, satisfying the following set of inequalities
$$\lambda'_{k1}\geq\lambda_{k1}\geq\lambda'_{k2}\geq\lambda_{k2}\geq...\geq\lambda'_{k,k-1}\geq\lambda_{k,k-1}\geq\lambda'_{kk}\geq\lambda_{kk}$$
for $k=1,...,n$, and
$$\lambda'_{k1}\geq\lambda_{k-1,1}\geq\lambda'_{k2}\geq\lambda_{k-1,2}\geq...\geq\lambda'_{k,k-1}\geq\lambda_{k-1,k-1}\geq\lambda'_{kk}$$
for $k=2,...,n$. Additionally, if $\lambda$ consists of integers and $\sigma_k=1$ then we have $\lambda'_{k1}\leq-1$.

\subsection{Indexing the eigenvectors by patterns.}
Suppose that $\mathfrak{g}=\mathfrak{g}_n$ is either $\mathfrak{sp}_{2n}$ or $\mathfrak{o}_{2n+1}$. According to \cite{KR} the spectrum of limit algebra $\mathcal{A}^\mp=\lim\limits_{\varepsilon\to0}\mathcal{A}_{\mu(\varepsilon)}$ in any irreducible finite-dimensional $\mathfrak{g}$-module $V_\lambda$ is simple. Moreover, from Theorem~A we know that the limit algebra $\mathcal{A}^\mp$ is generated by commutative subalgebras $\mathcal{A}_k=\varphi_k(\mathcal{B}^{\mp})$ in the successive centralizer algebras $U(\mathfrak{g}_k)^{\mathfrak{g}_{k-1}}$ (which are quotients of $\text{Y}^\mp(2)$). So the eigenbasis of $V_\lambda$  with respect to $\mathcal{A}^\mp$ agrees with the decomposition of $V_\lambda=\bigoplus\limits_{\mu} V_\lambda^\mu\otimes V_\mu$ with respect to $\mathfrak{g}_{n-1}$. This means that any eigenvector has the form $v=u\otimes w$ where $u\in V_\lambda^\mu$ and $w\in V_\mu$ for some $\mu$. Applying the same argument to the factor $w$ and proceeding by induction we obtain that any eigenvector $v$ with respect to $\mathcal{A}^{\mp}$ in $V_\lambda$ has the following form: for some collection of highest weights $\lambda_k$ of $\mathfrak{g}_k$ (with $\lambda_n=\lambda$) have $v=\bigotimes\limits_{k=1}^nu_k$ where $u_k$ is an eigenvector of $\mathcal{B}^{\mp}$ in the multiplicity space $V^{\lambda_{k-1}}_{\lambda_k}$. 

The multiplicity space $V^{\lambda_{k-1}}_{\lambda_k}=\text{Hom}_{\mathfrak{g}_{k-1}}(V_{\lambda_{k-1}},V_{\lambda_k})$ is known to be the (sum of) tensor products $\bigotimes\limits_{j=1}^kL(\alpha_{kj},\beta_{kj})\otimes W$ where $L(\alpha_{kj},\beta_{kj})$ is the restriction of the ``string'' representation of $Y(2)$ to $Y^{\mp}(2)$, and $W$ is a $1$-dimensional representation of $Y^{\mp}(2)$. The restriction of each factor $L(\alpha_{kj},\beta_{kj})$ to $\mathfrak{sl}_2\subset Y(2)$ is just the irreducible module with the highest weight $\alpha_{kj}-\beta_{kj}$. So we can regard the eigenbasis for $\mathcal{A}^\mp$ in $V_\lambda$ as an element of the continuous family of eigenbases for $\bigotimes\limits_{k=1}^n\mathcal{B}^\mp$ in the tensor products $\bigotimes\limits_{k=1}^n\bigotimes\limits_{j=1}^kL(\alpha_{kj},\beta_{kj})$ with $\alpha_{kj}$ being free parameters and the differences $\alpha_{kj}-\beta_{kj}$ being fixed integers. The second main result of the present paper is the following  

\medskip
\noindent\textbf{Theorem B.} \textit{There is a path $\alpha(t)$ ($t\in[0,\infty)$) in the space of parameters $\alpha_{kj}$ such that \begin{itemize} \item $\alpha(0)$ is the collection of $\alpha_{kj}$ which occurs in our $\mathfrak{g}_n$-module $V_\lambda$;
\item the spectrum of $\bigotimes\limits_{k=1}^n\mathcal{B}^{\mp}$ on $\bigoplus\bigotimes\limits_{k=1}^n\bigotimes\limits_{j=1}^kL(\alpha_{kj}(t),\beta_{kj}(t))\otimes W$ is simple for all $t>0$;
\item the limit of the corresponding eigenbasis as $t\to+\infty$ is just the product of the $\mathfrak{sl}_2$-weight bases in each $L(\alpha_{kj},\beta_{kj})=V_{\alpha_{kj}-\beta_{kj}}$.
\end{itemize}}
\medskip

The weight basis of $V_{\alpha_{kj}-\beta_{kj}}$ is numbered by the integers $\lambda_{kj}'$ satisfying the betweenness conditions from the Gelfand-Tsetlin pattern (of the corresponding type). So, as a corollary of Theorem~B, we get an indexing of the eigenbasis of $\mathcal{A}^\mp$ in our $\mathfrak{g}_n$-module $V_\lambda$ by the Gelfand-Tsetlin patterns described in Section~\ref{ss:GTpatterns}. By construction, our indexing of the eigenbasis depends on the choice of the path $\alpha(t)$. We conjecture that this indexing is in fact independent on this choice.

\subsection{Relation to crystals.} According to \cite{KR} there is a natural $\mathfrak{g}_n$-crystal structure on the set of eigenlines for any limit subalgebra from the family $\mathcal{A}_\mu$ in the space $V_\lambda$. In particular we have such a structure on the set of eigenlines for $\mathcal{A}^\mp$ acting on $V_\lambda$ which is the set of Gelfand-Tsetlin patterns according to Theorem~B. On the other hand, in \cite{L} Littelmann defines a crystal structure on the set Gelfand-Tsetlin patterns for all classical types. We conjecture that these two crystal structures on Gelfand-Tsetlin patterns are the same.

%The same arguments that we use in the proof, namely $Y^{\pm}(2)$-module structure on multiplicity space (\cite{M}) and isotypic components (\cite{K}), can be applied to the case of orthogonal Lie algebra $\mathfrak{o}_n$.

\subsection{The paper is organized as follows.} In Section~1 we recall some classical facts about twisted Yangians $\textrm{Y}^{\mp}(N)$ and their Bethe subalgebras following \cite{MNO} and \cite{NO}. In Section~2 we discuss shift of argument subalgebras, their limits and their quantization for regular semisimple $\mu\in\mathfrak{g}^*$ following \cite{V1}, \cite{Sh} and \cite{R}. Section~3 is devoted to the proof Theorem~A. In Section~4 we prove Theorem~B and formulate conjectures relating it to Littelmann's presentation of crystals.

\subsection{Acknowledgements.} We thank Alexander Molev for explanations on Yangians. The paper was finished during our stay at University of Tokyo. We are grateful to University of Tokyo and especially to Junichi Shiraishi for hospitality. The work of both authors has been funded by the Russian Academic Excellence Project ’5-100’. Theorem~B has been obtained under support of the Russian Science Foundation grant 16-11-10160. The first
author has also been supported in part by the Simons Foundation.

\newpage

\section{Yangians.}
\subsection{Yangian $\textrm{Y}(N)$.}

\begin{defn} \emph{The \emph{Yangian} $Y(N)$ is the associative unital algebra with infinite family of generators $t^{(r)}_{ij}$ with $i,j=1,...,N$ and $r\in\mathbb{Z}_{\geq0}$ satisfying the following conditions:
\begin{equation}\label{sootnY(N)}
[t^{(r+1)}_{ij}, t^{(s)}_{kl}]-[t^{(r)}_{ij}, t^{(s+1)}_{kl}]=t^{(r)}_{kj}t^{(s)}_{il}-t^{(s)}_{kj}t^{(r)}_{il},
\end{equation}
with $i,j,k,l=1,...,N$; $r,s\in\mathbb{Z}_{\geq0}$ and $t^{(0)}_{ij}:=\delta_{ij}.$}
\end{defn}

For $i,j=1,...,N$ introduce the power series $t_{ij}(u)$ in $u^{-1}$: 
\begin{equation}\label{T-series}
t_{ij}(u):=t^{(0)}_{ij}+t^{(1)}_{ij}u^{-1}+t^{(2)}_{ij}u^{-2}+...=\sum_{r\geq0}t_{ij}^{(r)}u^{-r}\in\textrm{Y}(N)\otimes\mathbb{C}[[u^{-1}]],
\end{equation}
and unite them all in the following \emph{T-matrix}:
\begin{equation}\label{T-matrix}
T(u):=\sum^N_{i,j=1}t_{ij}(u)\otimes E_{ij}\in\textrm{Y}(N)\otimes\textrm{End}(W)
\end{equation}
with $W=\mathbb{C}^N$. The defining relations then can be written as a single relation in the algebra $\textrm{Y}(N)\otimes\textrm{End}(W)^{\otimes2}((u^{-1},v^{-1}))$: 
\begin{equation}\label{trenary_relation}
R_{12}(u-v)T_1(u)T_2(v)=T_2(v)T_1(u)R_{12}(u-v),
\end{equation}
where $R_{12}(u-v)=1-\frac{\sum_{i,j=1}^NE_{ij}\otimes E_{ji}}{u-v}\in 1\otimes\textrm{End}(W)^{\otimes2}$ and the subscript $m$ in $T_m(u)$ means that we act on the $m$-th copy of the tensor product. Both the left-hand side and the hand-side belong to $\mathrm{Y}(N)[[u^{-1},v^{-1}]]\otimes\textrm{End}(W^{\otimes2})$ (localized by a certain multiplicative system), i.e. they can be regarded as operators on $W^{\otimes2}$ with the coefficients in $\textrm{Y}(N)$. This relation is usually referred to as \textit{ternary relation.}

\subsection{Twisted Yangian $\textrm{Y}^{\mp}(N)$.}

In this subsection we recall some standard facts about twisted Yangians $\textrm{Y}^{\mp}(N)$ following \cite{MNO}.

For the symplectic case we set $N=2n$ and consider a non-degenerate skew-symmetric bilinear form\\ $\langle\cdot,\cdot\rangle_{-}$ on $W=\mathbb{C}^N$ defined on basis vectors by the formulas:
\begin{equation}\label{skew_form}
\langle e_i, e_j\rangle_{-}=sgn(i)\delta_{i,-j},
\end{equation}
where $i,j=-n,...,-1,1,...,n$. Denote by $t$ the transposition associated with the form ($\ref{skew_form}$), i.e. 
\begin{equation}\label{transposition}
(E_{ij})^t=\theta_{ij}E_{-j,-i},
\end{equation}
where $\theta_{ij}:=sgn(i)sgn(j)$.

In the orthogonal case we consider a non-degenerate symmetric bilinear form and the corresponding transposition on $\textrm{End}(W)$ is defined by the same formulas ($\ref{transposition}$) but with $\theta_{ij}:=1$ for $i,j=-n,...,-1,0,1,...,n$.

For both cases introduce the $S$-matrix 
\begin{equation}\label{S-matrix}
S(u):=T(u)T^t(-u)\in\textrm{Y}(N)\otimes\textrm{End}(W),
\end{equation}
where $T^t(u)=\sum^n_{i,j=-n} \theta_{ij}t_{-j,-i}(u)\otimes E_{ij}$ is the image of usual T-matrix (but with indices $1,...,N$ changed to $-n,...,n$) under transposition $t$. The $(i,j)$-th entry of $S(u)$ is a power series in $u^{-1}$:
\begin{equation}\label{S-series}
s_{ij}(u)=\delta_{ij}+s^{(1)}_{ij}u^{-1}+s^{(2)}_{ij}u^{-2}+...
\end{equation}

The series $s_{ij}(u)$ can be expressed in terms of $t_{ij}(u)$ as
\begin{equation}\label{S_series}
s_{ij}(u)=\sum^n_{a=-n}\theta_{aj}t_{ia}(u)t_{-j,-a}(-u),
\end{equation}
and its coefficients have the following presentation:
\begin{equation}\label{texisted_generators}
s^{(M)}_{ij}=\sum^n_{a=-n}\sum^M_{r=0}\theta_{aj}t^{(M-r)}_{ia}(-1)^rt^{(r)}_{-j,-a}.
\end{equation}

\begin{defn} \textit{Twisted Yangian} $\textrm{Y}^{\mp}(N)$ is the subalgebra of Yangian $\textrm{Y}(N)$, generated by all $s^{(M)}_{ij}$ (where ``-'' corresponds to the construction via transposition associated with the skew-symmetric bilinear form and ``+'' corresponds to the symmetric form).
\end{defn}

\begin{rem} From now on every time we have symbols ``$\pm$'' or ``$\mp$'' inside the formulas the upper sign corresponds to $\textrm{Y}^-(N)$ while the lower sign corresponds to $\textrm{Y}^+(N)$.
\end{rem}

\begin{prop} 
(\cite{MNO}, Proposition 3.7 and Theorem 3.6) The generators of twisted Yangian $\textrm{Y}^{\mp}(N)$ satisfy following commutation relations:
\begin{eqnarray}\label{scruch3}
[s_{ij}(u),s_{kl}(v)]=\frac{1}{u-v}(s_{kj}(u)s_{il}(v)-s_{kj}(v)s_{il}(u))-\notag\\
-\frac{1}{u+v}\cdot(\theta_{k,-j}s_{i,-k}(u)s_{-j,l}(v)-\theta_{i,-l}s_{k,-i}(v)s_{-l,j}(u))+\\
+\frac{1}{u^2-v^2}\theta_{i,-j}(s_{k,-i}(u)s_{-j,l}(v)-s_{k,-i}(v)s_{-j,l}(u)).\notag\\
\notag
\end{eqnarray}
\begin{center}
and
\end{center}
\begin{equation}\label{scruch4}
\theta_{ij}s_{-j,-i}(-u)=s_{ij}(u)\mp\frac{s_{ij}(u)-s_{ij}(-u)}{2u}
\end{equation}
for $|i|,|j|,|k|,|l|\leq n$.
\end{prop}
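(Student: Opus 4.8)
\medskip
\noindent\textbf{Proof proposal.} The plan is to deduce both relations from the ternary relation \eqref{trenary_relation} for $T(u)$ together with the definition \eqref{S-matrix} of the $S$-matrix, $S(u)=T(u)T^t(-u)$, exploiting the embedding $\textrm{Y}^{\mp}(N)\subset\textrm{Y}(N)$; this is exactly the route of \cite{MNO}. The first step is to record how the $R$-matrix $R(u)=1-P/u$, with $P=\sum_{i,j}E_{ij}\otimes E_{ji}$, behaves under the partial transpositions $t_1,t_2$ associated with the form \eqref{transposition}. Setting $Q:=P^{t_2}=\sum_{i,j}\theta_{ij}E_{ij}\otimes E_{-i,-j}$, one readily checks the identities $P^{t_1}=P^{t_2}=Q$, $Q^2=N\,Q$ and $PQ=QP=\mp Q$, together with the rational-function identities they entail. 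These are the only points at which the symmetric/skew-symmetric distinction enters the computation, so the two cases can be treated uniformly at the cost of carrying the signs.

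The second step is to obtain the reflection-type (``quaternary'') relation for $S(u)$. Applying the transposition $t$ in one auxiliary space to suitably combined instances of the ternary relation --- one has to be careful here, since $t$ reverses the order of the matrix factors but not the order of their Yangian entries --- and using $P^{t_2}=Q$, one arrives at a relation of the form
\[
R_{12}(u-v)\,S_1(u)\,\widetilde R_{12}(u+v)\,S_2(v)=S_2(v)\,\widetilde R_{12}(u+v)\,S_1(u)\,R_{12}(u-v),
\]
where $\widetilde R_{12}$ is the partial transpose of the $R$-matrix (its precise form, including a sign that depends on the type, being as in \cite{MNO}). Now one reads off the $E_{ij}\otimes E_{kl}$-coefficient of this operator identity in $\textrm{Y}^{\mp}(N)\otimes\textrm{End}(W)^{\otimes2}$; expanding $R_{12}$, $\widetilde R_{12}$ and the double cross term $R_{12}\widetilde R_{12}$ (and using $PQ=QP=\mp Q$ to simplify the latter) produces precisely the three rational terms $\tfrac1{u-v}$, $\tfrac1{u+v}$, $\tfrac1{u^2-v^2}$ on the right-hand side of \eqref{scruch3}, with the conjugated indices $-i,-j$ and the factor $\theta_{i,-j}$ emerging from the action of $Q$.

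The symmetry relation \eqref{scruch4} is established by a separate and more hands-on computation. Specialising the ternary relation to $v=-u$ gives $R_{12}(2u)T_1(u)T_2(-u)=T_2(-u)T_1(u)R_{12}(2u)$; applying a partial transposition, using that the transposed $R$-matrix at the argument $2u$ is $1\mp Q/(2u)$, and invoking $Q^2=N\,Q$ and $PQ=QP=\mp Q$ to collapse the resulting expression, one extracts the appropriate matrix entry of $S(u)=T(u)T^t(-u)$ and of its transpose, which leaves exactly $\theta_{ij}s_{-j,-i}(-u)=s_{ij}(u)\mp\tfrac{s_{ij}(u)-s_{ij}(-u)}{2u}$. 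Alternatively, one can substitute the explicit formula \eqref{S_series}, $s_{ij}(u)=\sum_{a}\theta_{aj}t_{ia}(u)t_{-j,-a}(-u)$, into both sides of \eqref{scruch4} and reduce using the commutation relations \eqref{sootnY(N)} of $\textrm{Y}(N)$; this is elementary but bookkeeping-heavy.

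The one genuinely laborious point --- and the only real obstacle --- is the final extraction step in each case. Matching the exact $\tfrac1{u-v}$, $\tfrac1{u+v}$, $\tfrac1{u^2-v^2}$ structure and the index pattern of \eqref{scruch3}, and likewise isolating the correction term in \eqref{scruch4}, requires careful partial-fraction manipulation, vigilance about which indices get conjugated, and consistent tracking of the $\pm/\mp$ signs between the symplectic ($\textrm{Y}^{-}$) and orthogonal ($\textrm{Y}^{+}$) cases. Everything upstream is formal manipulation of the ternary relation; no idea beyond the reflection equation is needed, which is why we are content to cite \cite{MNO} for the details.
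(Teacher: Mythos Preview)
The paper does not prove this proposition at all: it is stated with the attribution ``(\cite{MNO}, Proposition 3.7 and Theorem 3.6)'' and no argument is given, the text moving directly to the next subsection. So there is nothing in the paper to compare your proposal against beyond the citation itself.

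That said, your outline is the correct one and matches the approach of \cite{MNO}: derive the reflection (quaternary) equation $R_{12}(u-v)S_1(u)R^{t}_{12}(-u-v)S_2(v)=S_2(v)R^{t}_{12}(-u-v)S_1(u)R_{12}(u-v)$ from the ternary relation by partial transposition, then read off matrix entries to get \eqref{scruch3}; and obtain the symmetry relation \eqref{scruch4} by comparing $S(u)$ with $S^t(-u)$ using $S(u)=T(u)T^t(-u)$ and the $v=-u$ specialisation of the ternary relation. Your identification of the bookkeeping (index conjugation via $Q$, the $\mp$ sign from $PQ=QP=\mp Q$, and the partial-fraction split into $\tfrac{1}{u-v}$, $\tfrac{1}{u+v}$, $\tfrac{1}{u^2-v^2}$) is accurate. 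Since the paper is content to cite \cite{MNO}, your sketch already goes further than what the paper itself provides.
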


%One can verify that the following theorem (\cite{MNO}, Theorem 3.8) holds.
%\begin{predl}
%These relations ((\ref{scruch3}) and (\ref{scruch4})) are defining relations of the twisted Yangian $\textrm{Y}^{\mp}(N)$.
%\end{predl}

\subsection{Maps of the twisted Yangian $\textrm{Y}^{\mp}(N)$.}
Throughout the whole paper when we work with $\textrm{Y}^-(N)$ we assume $N=2n$, $\mathfrak{g}_n=\mathfrak{sp}_{2n}=\mathfrak{sp}_N$, and for $\textrm{Y}^+(N)$ we assume $N=2n+1$, $\mathfrak{g}_n=\mathfrak{o}_{2n+1}=\mathfrak{o}_N$.

There exist maps between $\textrm{Y}^{\mp}(N)$ and $U(\mathfrak{g}_n)$ which allow us to study representations of one algebra via representations of the other.\\
Recall that $F_{ij}:=E_{ij}-\theta_{ij}E_{-j,-i}$  ($-n\leq i,j\leq n$) span the subalgebra $\mathfrak{g}_n$ in $\mathfrak{gl}(N)$, then the following proposition holds (\cite{MNO}, Propositions 3.11 and 3.12).

\begin{prop}\label{pr:EvaluationMap}
(i) The map $$\xi:s_{ij}(u)\mapsto\delta_{ij}+(u\mp\frac{1}{2})^{-1}F_{ij}$$ defines an algebra homomorphism
\begin{equation}\label{xi_map_scruch}
\xi:\textrm{Y}^{\mp}(N)\rightarrow U(\mathfrak{g}_n).
\end{equation}

(ii) The map
\begin{equation}
\nu:F_{ij}\mapsto s^{(1)}_{ij}
\end{equation}
is an embedding of $U(\mathfrak{g}_n)$ into $\textrm{Y}^{\mp}(N)$.
\end{prop}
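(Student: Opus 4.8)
The plan is to prove (i) by checking that $\xi$ respects a defining set of relations of $\textrm{Y}^{\mp}(N)$ and (ii) by recognizing $\nu$ as a one-sided inverse of $\xi$. Since the coefficients of a power series $s_{ij}(u)$ with $s^{(0)}_{ij}=\delta_{ij}$ generate $\textrm{Y}^{\mp}(N)$ and the relations \eqref{scruch3}, \eqref{scruch4} cut out the relation ideal (\cite{MNO}), it suffices to show that the matrix $\mathcal{S}(u):=1+(u\mp\tfrac12)^{-1}F$, where $F:=\sum_{i,j}F_{ij}\otimes E_{ij}\in U(\mathfrak{g}_n)\otimes\textrm{End}(W)$, satisfies the matrix forms of \eqref{scruch3} and \eqref{scruch4}. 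First I would dispose of the symmetry relation \eqref{scruch4}. From $F_{ij}=E_{ij}-\theta_{ij}E_{-j,-i}$ one gets $\theta_{ij}F_{-j,-i}=-F_{ij}$ (i.e.\ $F^t=-F$) in both the symplectic and orthogonal cases, hence $\mathcal{S}^t(-u)=1+(u\pm\tfrac12)^{-1}F$; a one-line partial-fractions computation shows this equals $\mathcal{S}(u)\mp\tfrac{1}{2u}\big(\mathcal{S}(u)-\mathcal{S}(-u)\big)$, and in fact this requirement already forces the shift to be exactly $\mp\tfrac12$. (Note that although there is an evaluation homomorphism $\textrm{Y}(N)\to U(\mathfrak{gl}_N)$, its restriction to $\textrm{Y}^{\mp}(N)$ does not land in $U(\mathfrak{g}_n)$ — the $u^{-2}$-coefficients of $s_{ij}(u)$ go to genuinely quadratic elements of $\mathfrak{gl}_N$ — so $\xi$ is not simply a restriction and must be verified directly.)

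The substantive step is the quaternary relation. I would substitute $\mathcal{S}(u)=1+(u\mp\tfrac12)^{-1}F$ into the matrix reflection equation underlying the coordinate relations \eqref{scruch3}, clear the scalar denominators, and compare coefficients of the resulting polynomial identity in $u$ and $v$. The purely scalar terms cancel by the Yang--Baxter relation and the unitarity of $R_{12}(u)=1-u^{-1}\sum_{i,j}E_{ij}\otimes E_{ji}$ and of its partial transpose; the terms linear in $F$ cancel using $F^t=-F$ and the standard trace identities for the permutation operator and its transpose; and the terms quadratic in $F$ reduce precisely to the commutation relations $[F_{ij},F_{kl}]$ holding in $U(\mathfrak{g}_n)$ because the $F_{ij}$ are the images of a Lie subalgebra of $\mathfrak{gl}_N$. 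To keep the computation finite, one may instead use that $\textrm{Y}^{\mp}(N)$ is generated by the coefficients $s_{ij}^{(1)}$ and $s_{ij}^{(2)}$: expanding $(u\mp\tfrac12)^{-1}=u^{-1}\pm\tfrac12 u^{-2}+\cdots$ gives $\xi(s_{ij}^{(1)})=F_{ij}$ and $\xi(s_{ij}^{(2)})=\pm\tfrac12 F_{ij}$, so only the relations among these two families need to be checked, those involving $s^{(2)}$ collapsing under $\xi$ to consequences of the $\mathfrak{g}_n$ relations. I expect this matrix/commutator verification to be the principal obstacle: it is elementary but bulky, and the $R$-matrix bookkeeping above is what makes it tractable.

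For (ii) I would first check that $\nu$ is well defined, i.e.\ that $s_{ij}^{(1)}\in\textrm{Y}^{\mp}(N)$ satisfy the defining relations of $\mathfrak{g}_n$. Extracting the coefficient of $u^{-1}v^{-1}$ in \eqref{scruch3} expresses $[s_{ij}^{(1)},s_{kl}^{(1)}]$ as a linear combination of the $s^{(1)}$'s with exactly the structure constants of $[F_{ij},F_{kl}]$, while the coefficient of $u^{-1}$ in \eqref{scruch4} gives $s_{ij}^{(1)}=-\theta_{ij}s_{-j,-i}^{(1)}$, matching $F_{ij}=-\theta_{ij}F_{-j,-i}$; hence $F_{ij}\mapsto s_{ij}^{(1)}$ extends to an algebra homomorphism $\nu\colon U(\mathfrak{g}_n)\to\textrm{Y}^{\mp}(N)$. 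Injectivity is then automatic: by part (i), $\xi(s_{ij}^{(1)})=F_{ij}$, so $\xi\circ\nu=\mathrm{id}_{U(\mathfrak{g}_n)}$, exhibiting a left inverse of $\nu$ (and showing in passing that $\xi$ is surjective). Thus $\nu$ is an embedding.
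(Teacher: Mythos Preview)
The paper does not give its own proof of this Proposition: it is simply quoted from \cite{MNO}, Propositions~3.11 and~3.12. So there is no internal argument to compare your attempt against. Your reconstruction of the argument is essentially the one in \cite{MNO}: use that the quaternary relation \eqref{scruch3} together with the symmetry relation \eqref{scruch4} give a presentation of $\textrm{Y}^{\mp}(N)$, check both for $\mathcal{S}(u)=1+(u\mp\tfrac12)^{-1}F$, and for (ii) read off the $\mathfrak{g}_n$-relations among the $s_{ij}^{(1)}$ and use $\xi\circ\nu=\mathrm{id}$ for injectivity. The symmetry-relation check and the injectivity argument are correct as written.

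One caveat: your proposed ``shortcut'' --- that since $\textrm{Y}^{\mp}(N)$ is generated by $s_{ij}^{(1)},s_{ij}^{(2)}$ it suffices to verify the relations among these --- is not a valid replacement for the full check. Knowing a generating set does not by itself reduce the relation ideal to relations among those generators; you would need a presentation in terms of $s^{(1)},s^{(2)}$ with an explicit complete list of relations, which you have not stated. So you should either drop this remark or stay with the direct verification of the full reflection equation for $\mathcal{S}(u)$, which (as you outline) boils down to the $\mathfrak{g}_n$-commutation relations for $F_{ij}$ after clearing denominators.
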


\subsection{Commutative subalgebras of $\textrm{Y}^{\mp}(N)$.}
To be able to describe the construction of Bethe subalgebras in $\textrm{Y}^{\mp}(N)$ (following \cite{NO}) we need to introduce few more notations. First of all, we work with tensors from $\textrm{Y}^{\mp}(N)[[u_1^{-1},...,u_m^{-1}]]\otimes\textrm{End}(W^{\otimes N})$. For the operators $$R_{ij}(u_i-u_j):=1-\frac{P_{ij}}{u_i-u_j},$$ where $P_{ij}$ is just permuting $i$-th and $j$-th terms of the tensors, to be correctly defined on the above space we need to localize this space by a multiplicative system $\{(u_i^{-1}-u_j^{-1}), (u_i^{-1}+u_j^{-1})|-n\leq i,j\geq n;\;i\neq j\}$. Such localization will allow us to work with specializations of the form $u_k=u_l+a$ when $a\neq0$ for some $-n\leq k,l\geq n$ as well.\\

Set $S_i:=S_i(u-i+1)$ to be an operator from the localized $\textrm{Y}^{\mp}(N)[[u_1^{-1},...,u_m^{-1}]]\otimes\textrm{End}(W^{\otimes N})$ acting on the $i$-th copy of $W$ as the S-matrix $S(u-i+1)$ (here we abuse the fact that localizations are possible) and as 1 on all other terms:
\begin{equation}
S_i=\sum_{i,j=-n}^ns_{ij}(u-i+1)\otimes1^{\otimes(i-1)}\otimes E_{ij}\otimes1^{\otimes(n-i)}.
\end{equation}

By $R^t(u)$ we denote the following operator from $\textrm{Y}^{\mp}(N)[[u^{-1}]]\otimes\textrm{End}(W^{\otimes2})$:
\begin{equation}
R^t(u):=1-\frac{\sum_{i,j=-n}^nE_{ij}^t\otimes E_{ji}}{u}=1-\frac{\sum_{i,j=-n}^nE_{ij}\otimes E_{ji}^t}{u},
\end{equation}
where $t:\textrm{End}(W^{\otimes N})\rightarrow\textrm{End}(W^{\otimes N})$ is the transposition defined earlier in Subsection 1.2.

Then $R'_{ij}=R^t_{ij}(-2u+i+j-2)$ is another element of the localization of $\textrm{Y}^{\mp}(N)[[u_1^{-1},...,u_m^{-1}]]\otimes\textrm{End}(W^{\otimes N})$ acting as $R^t(-2u+i+j-2)$ on the $i$-th and the $j$-th copies of tensor product.

Finally, the following elements of $\textrm{Y}^{\mp}[[u^{-1}]]\otimes\textrm{End}(W^{\otimes N})$ can be introduced:
\begin{equation}\label{S_uk}
S(u,k)=S_1(R'_{12}\cdot...\cdot R'_{1k})S_2(R'_{23}\cdot...\cdot R'_{2k})\cdot...\cdot S_k
\end{equation}
\begin{center}
and
\end{center}
\begin{equation}\label{C_uk}
C(u,k)=C_{k+1}\tilde{R}^t_{k+1,k+2}\cdot...\cdot\tilde{R}^t_{k+2,N}C_{k+2}\cdot...\cdot C_{N-1}\tilde{R}^t_{N-1,N}C_N,
\end{equation}
where $\tilde{R}^t_{ij}=R^t_{ij}(-2u-N+i+j+2)$ and $C\in\textrm{End}(W)$ satisfies $C^t=-C$.

Consider the following series with the coefficients in $\textrm{Y}^{\mp}(N)$:
\begin{equation}
\sigma_k(u,C)=\textrm{tr}(A_NS(u,k)(\prod^{\rightarrow}_{i=1,...,k}\;\prod^{\rightarrow}_{j=k+1,...,N}R^t_{ij})C(u,k)),
\end{equation}
where $A_N$ denotes the image of normalised  antisymmetrizer $a_N =\frac{1}{N!}\sum_{p\in\mathfrak{S}_N}sgn(p)p$ under the natural map from the symmetric group $\mathfrak{S}_N$ to $\textrm{End}(W^{\otimes N})$.

The main results about these series for $C\in\textrm{End}(W^{\otimes N})$ satisfying $C^t=-C$ (\cite{NO}, Proposition 3.3, Theorem 3.4 and Theorem 3.5) are gathered in the Theorem below.

\begin{predl}\label{th:BetheSubalgebra}
(i) The coefficients of $\sigma_N(u,C)$  generate the center of $Y^{\mp}(N)$.

(ii) The coefficients of $\sigma_1(u,C),...,\sigma_N(u,C)$ generate a commutative subalgebra of $Y^{\mp}(N)$ called \emph{Bethe subalgebra}.

(iii) If $C$ has pairwise distinct eigenvalues, then the coefficients at $u^{-2}, u^{-4},...$ of the series $\sigma_N(u,C)$, $\sigma_{N-2}(u,C)$, ...  and the coefficients at $u^{-1}, u^{-3},...$ of the series $\sigma_{N-1}(u,C)$, $\sigma_{N-3}(u,C)$,... are free generators of this commutative subalgebra. This commutative subalgebra is maximal.
\end{predl}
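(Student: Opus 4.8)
The plan is to reduce the whole statement to two inputs: the \emph{reflection (quaternary) equation} satisfied by the $S$-matrix and a \emph{fusion procedure} built from the antisymmetrizer $A_N$. In addition to the ternary relation for $T(u)$, the matrix $S(u)=T(u)T^t(-u)$ satisfies, after the localization described above, a relation of the shape
\begin{equation*}
R_{12}(u-v)\,S_1(u)\,R^t_{12}(-u-v)\,S_2(v)=S_2(v)\,R^t_{12}(-u-v)\,S_1(u)\,R_{12}(u-v)
\end{equation*}
in $\textrm{Y}^{\mp}(N)[[u^{-1},v^{-1}]]\otimes\textrm{End}(W^{\otimes2})$ (this is in \cite{MNO}). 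I would first promote this, by repeated application, to an exchange relation between the layered operators $S(u,k)$ and $S(v,l)$ of \eqref{S_uk}, and then contract everything with $A_N$.

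Next I would record the absorption identities for $A_N$: since $a_N$ is the normalized full antisymmetrizer, $A_NP_{i,i+1}=-A_N=P_{i,i+1}A_N$, so each $A_NR_{ij}(u)$ collapses to a scalar multiple of $A_N$ and each $A_NR^t_{ij}(u)$ collapses to a scalar times a lower-rank antisymmetrizer; these identities are precisely what make the products of $R'_{ij}$ and $\tilde R^t_{ij}$ inside \eqref{S_uk}--\eqref{C_uk} assemble, under $\textrm{tr}(A_N\,\cdot\,)$, into well-defined series. Part~(i) then follows by taking $k=N$: here $C(u,N)=1$ and $A_NS(u,N)$ acts in the one-dimensional space $A_NW^{\otimes N}=\Lambda^NW$, hence is a scalar series --- the Sklyanin determinant $\mathrm{sdet}\,S(u)$ --- whose coefficients commute with every $s_{ij}(v)$ by a direct check against \eqref{scruch3}--\eqref{scruch4} (equivalently by the reflection equation applied to the pair $A_NS(u,N)$, $S(v)$). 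Knowing that $Z(\textrm{Y}^{\mp}(N))$ is polynomial on the expected number of generators, one checks that these coefficients exhaust it.

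For part~(ii) I would run the standard ``train'' argument: starting from $\sigma_k(u,C)\,\sigma_l(v,C)$, insert $R_{ab}^{-1}R_{ab}$ (at the appropriate shifted arguments) between the two layered blocks and use the reflection equation to drag each factor $S_a(u-a+1)$ through the block of $S_b(v-b+1)$'s, collecting $R$- and $R^t$-factors that are absorbed by $A_N$; the mismatched ``diagonal'' contributions drop out because the relevant $R^t_{ij}$ become proportional to projectors annihilated by $A_N$. Once the blocks are interchanged, the auxiliary factors $C(u,k)$ and $C(v,l)$ commute since the $C$'s act in disjoint tensor legs, and one reads off $[\sigma_k(u,C),\sigma_l(v,C)]=0$, i.e.\ commutativity of the Bethe subalgebra. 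For part~(iii) I would pass to the classical limit: $\mathrm{gr}\,\textrm{Y}^{\mp}(N)$ is a polynomial Poisson algebra, the symbols of the selected coefficients of the $\sigma_k(u,C)$ generate a Poisson-commutative subalgebra $\overline{\mathcal B}$, and one shows that for $C$ with distinct eigenvalues these symbols are algebraically independent and their number matches the Mishchenko--Fomenko/argument-shift transcendence-degree count for the relevant twisted current Poisson variety; a filtered algebra whose associated graded is free polynomial on the symbols of given elements is itself free polynomial on those elements. Maximality then reduces to maximality of the Mishchenko--Fomenko subalgebra in the regular-semisimple case (the distinct-eigenvalue hypothesis on $C$ being the avatar of regularity), via the same dimension bound.

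The main obstacle is the commutativity computation in part~(ii): turning a single reflection equation into the interchange of the two multi-layered blocks $S(u,k)$ and $S(v,l)$ is a lengthy bookkeeping argument in which one must track exactly which $R^t$-factors are killed by the antisymmetrizer and verify that no leftover terms survive. The even-versus-odd $N$ distinction and the Pfaffian phenomena alluded to for $\mathfrak{o}_{2n}$ (absent here, since $N$ is odd for $\textrm{Y}^+$) are a symptom of how delicate this is. A secondary difficulty is the algebraic-independence and transcendence-degree count underlying maximality in (iii), which needs an explicit hold on the classical limit of the Sklyanin-determinant-type series.
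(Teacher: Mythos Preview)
The paper does not give its own proof of this theorem: it is quoted from \cite{NO} (Proposition~3.3, Theorems~3.4 and~3.5 there), so there is no in-paper argument to compare against. Your sketch follows essentially the strategy of the original Nazarov--Olshanski proof --- promote the reflection (quaternary) relation for $S(u)$ to the fused multi-layered blocks using the antisymmetrizer absorption identities to obtain commutativity, identify $\sigma_N(u,C)$ with the Sklyanin determinant for~(i), and pass to the associated graded (twisted polynomial current algebra) for the independence and maximality count in~(iii) --- so at the level of a plan there is no divergence to report.
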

In this paper we restrict to the Bethe subalgebras corresponding to regular Cartan (i.e. diagonal) $C$.

\subsection{Commutative subalgebras in $\textrm{Y}^{\mp}(2)$.}
For $\mathfrak{sp}_2$ the regular Cartan element $F_{11}=E_{11}-E_{-1,-1}$ is unique up to a constant factor. Therefore, the maximal commutative subalgebra (Bethe subalgebra) provided by the above construction is unique as well. We denote this subalgebra by $\mathcal{B}^-$. By Theorem~\ref{th:BetheSubalgebra}(i), the coefficients of $\sigma_2(u,F_{11})$ generate the center of $\textrm{Y}^-(2)$. Other generators of $\mathcal{B}^-$ are the coefficients of $\sigma_1(u, F_{11})$.

By definition,
\begin{equation}\label{sigma2}
\sigma_1(u, C)=\textrm{tr} \left[A_2\cdot S_1\cdot (1-\frac{\sum_{i,j}E^t_{ij}\otimes E_{ji}}{3-2u})\cdot C_2\right],
\end{equation}
where $A_2=\frac{1}{2!}(1-\sum_{i,j}E_{ij}\otimes E_{ji})$. 

The image of $A_2$ is one-dimensional -  $\mathbb{C}\cdot(e_{-1}\otimes e_1-e_1\otimes e_{-1})$, hence the basis vectors $e_{-1}\otimes e_{-1}$ and $e_1\otimes e_1$ have zero contribution to the trace in ($\ref{sigma2}$). For the remaining basis vectors before evaluating the trace we get the following images:
$$e_{-1}\otimes e_1\mapsto\frac{1}{2}\left(s_{-1,-1}(u)-\frac{1}{3-2u}(s_{-1,-1}(u)+s_{11}(u))\right)(e_{-1}\otimes e_1-e_1\otimes e_{-1}),$$
$$e_1\otimes e_{-1}\mapsto\frac{1}{2}\left(-s_{11}(u)+\frac{1}{3-2u}(s_{11}(u)+s_{-1,-1}(u))\right)(e_1\otimes e_{-1}-e_{-1}\otimes e_1).$$
Hence we have $$\sigma_1(u, F_{11})=\frac{1}{2}(s_{-1,-1}(u)-s_{11}(u)),$$ so the elements $s^{(2m+1)}_{11}-s^{(2m+1)}_{-1,-1}$ ($m\in\mathbb{Z}_{\geq0}$) together with the center of $\textrm{Y}^{-}(2)$ generate a maximal commutative subalgebra in $\textrm{Y}^{-}(2)$ by Theorem~\ref{th:BetheSubalgebra} (iii). From the \textit{symmetry relation} ($\ref{scruch4}$) it follows that $s_{11}^{(2m+1)}=-s_{-1,-1}^{(2m+1)}$. Thus we can state that $\mathcal{B}^-$ is generated by the center of $\textrm{Y}^-(2)$ and the elements $s_{11}^{(2m+1)}$ ($m\in\mathbb{Z}_{\geq0}$).

A similar situation occurs for $\mathfrak{o}_2$. The element $F_{11}$ defines the Bethe subalgebra $\mathcal{B}^+$ of $Y^+(2)$. The non-trivial generators are the coefficients of $\sigma_1(u,C)$ but in this case $E^t_{ij}=E_{-j,-i}$ leading to:
$$e_{-1}\otimes e_1\mapsto\frac{1}{2}\left(s_{-1,-1}(u)+\frac{1}{3-2u}\left(s_{11}(u)-s_{-1,-1}(u)\right)\right)e_{-1}\otimes e_1\;+\;\ldots,$$
$$e_1\otimes e_{-1}\mapsto\frac{1}{2}\left(-s_{11}(u)+\frac{1}{3-2u}\left(s_{11}(u)-s_{-1,-1}(u)\right)\right)e_1\otimes e_{-1}\;+\;\ldots$$
After taking the trace we obtain $$\sigma_1(u,F_{11})=\frac{2u-1}{6-4u}(s_{11}(u)-s_{-1,-1}(u)).$$
Again from the symmetry relation ($\ref{scruch4}$) for $\textrm{Y}^+(N)$ we know that $s_{11}^{(2m+1)}=-s_{-1,-1}^{(2m+1)}$ ($m\in\mathbb{Z}_{\geq0}$) and $s_{11}^{(2m)}-s_{-1,-1}^{(2m)}=s_{-1,-1}^{(2m-1)}$ ($m\in\mathbb{Z}_{>0}$). This observation allows us to state that the subalgebra generated by the coefficients of $\sigma_1(u, F_{11})$ at $u^{-1}$, $u^{-3}$, ... coincides with the subalgebra generated by $s_{11}^{(2m+1)}$ ($m\in\mathbb{Z}_{\geq0}$). Together with the central elements of $\textrm{Y}^+(2)$ they generate Bethe subalgebra $\mathcal{B}^+\subset\textrm{Y}^+(2)$.

The latter results can be united in the following proposition.

\begin{prop}
Bethe subalgebra $\mathcal{B}^{\mp}\subset\textrm{Y}^{\mp}(2)$ corresponding to $C=F_{11}$ is generated by all $s_{11}^{(2m+1)}$ with $m\in\mathbb{Z}_{\geq0}$ and the center $ZY^{\mp}(2)$ of the twisted Yangian $Y^{\mp}(2)$.
\end{prop}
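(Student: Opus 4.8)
The plan is to combine the free-generation statement of Theorem~\ref{th:BetheSubalgebra}(iii) with an explicit evaluation of the series $\sigma_1(u,F_{11})$ and the symmetry relation (\ref{scruch4}). Since here $N=2$ and $F_{11}=E_{11}-E_{-1,-1}$ has the two distinct eigenvalues $\pm1$, part (iii) of Theorem~\ref{th:BetheSubalgebra} applies and tells us that the coefficients of $\sigma_2(u,F_{11})$ at $u^{-2},u^{-4},\dots$ together with the coefficients of $\sigma_1(u,F_{11})$ at $u^{-1},u^{-3},\dots$ form a free generating set for $\mathcal{B}^{\mp}$. By part (i) the center $ZY^{\mp}(2)$ is generated by the coefficients of $\sigma_2(u,F_{11})$, and by part (ii) it lies in $\mathcal{B}^{\mp}$, while it obviously contains the chosen even coefficients of $\sigma_2(u,F_{11})$; hence $\mathcal{B}^{\mp}$ is equally well generated by $ZY^{\mp}(2)$ together with the coefficients of $\sigma_1(u,F_{11})$ at odd powers of $u^{-1}$. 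So the proposition reduces to the claim that those odd coefficients and the elements $s_{11}^{(2m+1)}$, $m\geq0$, generate the same subalgebra of $\textrm{Y}^{\mp}(2)$ — in fact span the same subspace, since the change of generators will turn out to be linear.

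Next I would evaluate $\sigma_1(u,F_{11})$ directly from (\ref{sigma2}), using that the image of the antisymmetrizer $A_2$ is the line spanned by $e_{-1}\otimes e_1-e_1\otimes e_{-1}$, so that only $e_{-1}\otimes e_1$ and $e_1\otimes e_{-1}$ contribute to the trace. This yields $\sigma_1(u,F_{11})=\tfrac12\bigl(s_{-1,-1}(u)-s_{11}(u)\bigr)$ in the symplectic case and $\sigma_1(u,F_{11})=\tfrac{2u-1}{6-4u}\bigl(s_{11}(u)-s_{-1,-1}(u)\bigr)$ in the orthogonal case, the difference coming from the two forms of $E_{ij}^{t}$. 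Specializing (\ref{scruch4}) to $i=j=1$ gives $s_{11}^{(2m+1)}=-s_{-1,-1}^{(2m+1)}$ for both signs, and in addition $s_{11}^{(2m)}-s_{-1,-1}^{(2m)}=s_{-1,-1}^{(2m-1)}=-s_{11}^{(2m-1)}$ for $\textrm{Y}^{+}(2)$. In the symplectic case this already closes the argument: the coefficient of $\sigma_1(u,F_{11})$ at $u^{-(2m+1)}$ is $\tfrac12\bigl(s_{-1,-1}^{(2m+1)}-s_{11}^{(2m+1)}\bigr)=-s_{11}^{(2m+1)}$, so the two generating families agree up to sign.

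In the orthogonal case the symmetry relations give $s_{11}(u)-s_{-1,-1}(u)=(2-u^{-1})\,g(u)$ with $g(u):=\sum_{k\geq0}s_{11}^{(2k+1)}u^{-(2k+1)}$, hence $\sigma_1(u,F_{11})=h(u)\,g(u)$ where $h(u)=\tfrac{(2u-1)^{2}}{u(6-4u)}$ expands as a power series in $u^{-1}$ with nonzero constant term $-1$. Comparing coefficients then shows that the coefficient of $\sigma_1(u,F_{11})$ at $u^{-(2\ell+1)}$ equals $-s_{11}^{(2\ell+1)}$ plus a scalar combination of $s_{11}^{(1)},\dots,s_{11}^{(2\ell-1)}$; this triangular, hence invertible, linear relation between the two families shows they span the same subspace, and together with the first paragraph this proves the proposition. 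The only genuine computation is this last step — expanding the rational prefactor $\tfrac{2u-1}{6-4u}$ and checking triangularity — whereas the symplectic case and the reduction through Theorem~\ref{th:BetheSubalgebra} are immediate.
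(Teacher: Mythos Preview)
Your proposal is correct and follows essentially the same route as the paper: compute $\sigma_1(u,F_{11})$ explicitly on the one-dimensional image of $A_2$, invoke Theorem~\ref{th:BetheSubalgebra}, and then use the symmetry relation (\ref{scruch4}) to pass from the odd coefficients of $\sigma_1$ to the elements $s_{11}^{(2m+1)}$. Your treatment of the orthogonal case is in fact more explicit than the paper's, which simply asserts the equality of the two subalgebras; your rewriting $s_{11}(u)-s_{-1,-1}(u)=(2-u^{-1})g(u)$ and the resulting triangular change of generators makes that step transparent.
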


\subsection{Representation theory of Yangian $\textrm{Y}(2)$ and twisted Yangians $\textrm{Y}^{\mp}(2)$.}
The Yangian $\textrm{Y}(2)$ is a Hopf algebra with the coproduct $\Delta$ on $\mathrm{Y}(2)$ determined by
\begin{equation}\label{coproduct}
\Delta\left(t_{ij}\left(u\right)\right)=t_{i1}(u)\otimes t_{1j}(u)+t_{i,-1}(u)\otimes t_{-1,j}(u).
\end{equation}
This defines a $\textrm{Y}(2)$-module structure on any tensor products of $\textrm{Y}(2)$-modules. Moreover, the twisted Yangians $\textrm{Y}^{\mp}(2)$ are left coideal subalgebras in $\textrm{Y}(2)$ with respect to $\Delta$, so any tensor product of a $\textrm{Y}(2)$-module by a $\textrm{Y}^{\mp}(2)$ is still a $\textrm{Y}^{\mp}(2)$-module. We will construct all necessary $\textrm{Y}^{\mp}(2)$-modules by tensoring very simple ones using the above Hopf coideal structure.  

For a pair of complex numbers $(\alpha, \beta)$ with $\alpha-\beta\in\mathbb{Z}_+$ we consider the irreducible representation $L(\alpha, \beta)$ of $\mathfrak{gl}_2$ with highest weight $(\alpha,\beta)$. One can define the action of the Yangian $\mathrm{Y}(2)$ on $L(\alpha,\beta)$ via the evaluation homomorphism:
\begin{equation}\label{yangian_action}
\mathrm{Y}(2)\rightarrow U\left(\mathfrak{gl}_2\right)
\end{equation}
\begin{equation}
t_{ij}(u)\mapsto\delta_{ij}+E_{ij}u^{-1},\;\;i,j=-1,1
\end{equation}

The coproduct $\Delta$ on $\mathrm{Y}(2)$ allows us to construct algebra homomorphism $\mathrm{Y}(2)\rightarrow U\left(\mathfrak{gl}_2\right)^{\otimes k}$ for $k\in\mathbb{Z}_+$. This homomorphism endows the $\mathfrak{gl}_2$-module 
\begin{equation}\label{irr_modules}
L=L(\alpha_1,\beta_1)\otimes...\otimes L(\alpha_k,\beta_k)
\end{equation}
with a structure of a $\mathrm{Y}(2)$-module. We can restrict $L$ to the twisted Yangian $\mathrm{Y}^-(2)$ using the expression (\ref{S_series}) of the generators $s_{ij}^{(M)}$ ($i,j=-1,1$; $M\in\mathbb{Z}_{\geq0}$) in terms of the generators of $\mathrm{Y}(2)$:
\begin{equation}\label{s_t_relation_2}
s_{ij}(u)=\theta_{1j}t_{i1}(u)t_{-j,-1}(-u)+\theta_{-1,j}t_{i,-1}(u)t_{-j,1}(-u)
\end{equation}

Next, for any $\delta\in\mathbb{C}$ we have a one-dimensional representation $W\left(\delta\right)$ of $\mathrm{Y}^+(2)$ spanned by vector $w$ with
\begin{equation}\label{1_dim_repr_+}
s_{11}(u)w=\frac{u+\delta}{u+1/2}w,\;\;\;s_{-1,-1}(u)w=\frac{u-\delta+1}{u+1/2}w,
\end{equation}
and $s_{1,-1}(u)w=s_{-1,1}(u)w=0$. From the Hopf coideal structure on $\mathrm{Y}^+(2)$ we have a structure of $\mathrm{Y}^+(2)$-module on $L\otimes W\left(\delta\right)$.

We will compute the images of non-trivial generators $s_{11}^{(2m+1)}$ ($m\in\mathbb{Z}_{\geq0}$) of $\mathcal{B}^-$ under homomorphism
\begin{equation}
\mathrm{Y}^-(2)\rightarrow U(\mathfrak{gl}_2)^{\otimes k}
\end{equation}
and of $\mathcal{B}^+$ under homomorphism
\begin{equation}
\mathrm{Y}^+(2)\rightarrow U(\mathfrak{gl}_2)^{\otimes k}\otimes \mathbb{C}[\delta]
\end{equation}
in the last section of this paper.

\subsection{Homomorphism to the centralizer algebra.}

%The center of Yangian $\textrm{Y}(N)$ is generated by coefficients of the following series from $\textrm{Y}(N)[[u^{-1}]]$, which is called \textit{quantum determinant}
%\begin{equation}\label{quantum_determinant}
%\mathrm{qdet}\;T(u)=\sum_{\sigma\in\mathfrak{S}_N}sgn(\sigma)t_{\sigma(1),1}(u)...t_{\sigma(N),N}(u-N+1).
%\end{equation}

%The center of the twisted Yangian is generated by even coefficients of the series, called \textit{Sklyanin determinant}, and defined for $\textrm{Y}^-(N)$ as
%\begin{equation}\label{sklyanin_determinant_sp}
%\mathrm{sdet}\;S(u)=\frac{2u+1}{2u-2n+1}\mathrm{qdet}\;\textit{T(u)}\mathrm{qdet}\;\textit{T(-u+N-1)},
%\end{equation}
%and for $\textrm{Y}^+(N)$ as
%\begin{equation}\label{sklyanin_determinant_o}
%\mathrm{sdet}\;S(u)=\mathrm{qdet}\;\textit{T(u)}\mathrm{qdet}\;\textit{T(-u+N-1)}.
%\end{equation}

%The map $\xi$ from Proposition~\ref{pr:EvaluationMap} (i) takes central elements of twisted Yangians $\textrm{Y}^{\mp}(N)$ to central elements of universal enveloping algebra $U(\mathfrak{g}_N)$. 

One of the main features of twisted Yangians is the existence of evaluation homomorphisms (contrary to the usual Yangians of classical Lie algebras except type A). To define such homomorphisms, consider the following matrix with the coefficients from $U(\mathfrak{g}_n)$:
\begin{equation}\label{calF-matrix}
\mathcal{F}:=(F_{ij})_{i,j=-n}^n=\sum_{i,j=-n}^nF_{ij}\otimes E_{ij}\in\mathfrak{g}_n\otimes\textrm{End}(W)
\end{equation} 

Consider series with coefficients from the algebra of polynomial functions in the coordinates $l_i=\lambda_i+\rho_i$  (given by the components of a weight shifted by $\rho$) on the dual of the Cartan subalgebra $(\mathfrak{h}_n)^*$, which are invariant under the ``shifted'' action of the Weyl group:
$$\chi_n(u):=\prod^n_{i=1}\frac{(u+1/2)^2-l^2_i}{(u+1/2)^2-\rho_i^2}.$$ The coefficients of these series can be regarded as central elements of the universal enveloping algebra via Harish-Chandra homomorphism. In fact the only property of $\chi_n$ we need in this paper is that it is a series of the form 
\begin{equation}\label{def:chi} \chi_n(u)=1+\sum\limits_{r=1}^\infty \chi_{n,r}u^{-r},
\end{equation} where $\chi_{n,r}$ are central elements of $U(\mathfrak{g}_n)$ of the PBW degree $r$.

We have the following (see \cite{MO}, Proposition 4.14 and Proposition 4.15).

\begin{predl}\label{th:centralizer}
(i) The map $$\varphi_n:S(u)\mapsto \chi_n(u)(1-\frac{\mathcal{F}}{u+\frac{N\pm1}{2}})^{-1}$$ is a surjective homomorphism of algebras $\textrm{Y}^{\mp}(N)\rightarrow U(\mathfrak{g}_{n})$.

(ii) For $m<n$ the image of $Y^{\mp}(M)$ under $\varphi_n$ is contained in the centralizer subalgebra $U(\mathfrak{g}_{n})^{\mathfrak{g}_{n-m}}$.
\end{predl}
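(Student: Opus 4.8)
The statement is quoted from \cite{MO} (Propositions~4.14 and 4.15); here is the approach I would take. The plan is to prove (i) by checking that the matrix
\[
\mathcal{S}(u):=\chi_n(u)\Bigl(1-\tfrac{\mathcal{F}}{u+\kappa}\Bigr)^{-1},\qquad \kappa:=\tfrac{N\pm1}{2},
\]
satisfies the defining relations of $\mathrm{Y}^{\mp}(N)$ -- i.e.\ \eqref{scruch3} and \eqref{scruch4} -- and then reading surjectivity off its leading term; part (ii) will follow from a single matrix commutation identity. Write $\mathcal{R}(u):=\bigl(1-(u+\kappa)^{-1}\mathcal{F}\bigr)^{-1}$, so that $\mathcal{S}(u)=\chi_n(u)\mathcal{R}(u)$.

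First I would isolate the two facts about $\mathcal{F}$ on which everything rests. The first is $\mathcal{F}^{t}=-\mathcal{F}$, immediate from $F_{ij}=E_{ij}-\theta_{ij}E_{-j,-i}$ and $\theta_{-j,-i}=\theta_{ij}$. The second repackages the $\mathfrak{gl}_N$ commutation relations carried by $\mathcal{F}$ as
\[
[\,X\otimes 1,\ \mathcal{F}\,]=[\,1\otimes X^{\mathsf T},\ \mathcal{F}\,]\qquad(X\in\mathfrak{g}_n),
\]
where ${}^{\mathsf T}$ is the ordinary matrix transpose, which -- unlike the $t$ of \eqref{transposition} -- preserves $\mathfrak{g}_n$ (indeed $F_{pq}^{\mathsf T}=F_{qp}$); this identity is checked on the generators $F_{pq}$ by expanding both sides. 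Then I would verify the two defining relations for $\mathcal{S}(u)$. For \eqref{scruch3} (equivalently, in $R$-matrix form, the reflection equation for $\mathcal{S}$): the scalar series $\chi_n(u)$ commutes with everything and cancels from both sides, so it suffices to verify the relation for $\mathcal{R}(u)$; passing to inverse matrices and using unitarity of the $R$-matrices converts this into a reflection-type identity for the \emph{polynomial} matrix $\mathcal{R}(u)^{-1}=1-(u+\kappa)^{-1}\mathcal{F}$, which unwinds to a finite computation matching the coefficients of $1$, $(u-v)^{-1}$, $(u+v)^{-1}$, $(u^2-v^2)^{-1}$ and amounts exactly to the two facts above. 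For \eqref{scruch4} (in matrix form $\mathcal{S}^{t}(-u)=\mathcal{S}(u)\mp\tfrac{1}{2u}(\mathcal{S}(u)-\mathcal{S}(-u))$): I would compute $\mathcal{S}^{t}(-u)$ using $\mathcal{F}^{t}=-\mathcal{F}$ together with the commutation relations, and find that the relation forces a definite functional equation relating $\chi_n(u)$ to $\chi_n(-u)$; the displayed product $\chi_n(u)=\prod_{i=1}^n\tfrac{(u+1/2)^2-l_i^2}{(u+1/2)^2-\rho_i^2}$, read through Harish-Chandra as a central series with $\chi_n(u)=1+O(u^{-1})$, is its unique solution. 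This makes $\varphi_n$ a well-defined homomorphism; surjectivity is then immediate, since $\mathcal{S}(u)=1+(\chi_{n,1}\cdot 1+\mathcal{F})u^{-1}+\dots$ gives $\varphi_n(s^{(1)}_{ij})=F_{ij}$ for $i\neq j$, and these off-diagonal $F_{ij}$ ($-n\le i,j\le n$) are root vectors of the semisimple algebra $\mathfrak{g}_n$, hence generate it as a Lie algebra.

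For part (ii) I would take $\mathrm{Y}^{\mp}(M)$ ($M=2m$) to be the sub-twisted-Yangian on the $2m$ outer indices $\{\,n-m<|i|\le n\,\}$ and $\mathfrak{g}_{n-m}\subset\mathfrak{g}_n$ to be the subalgebra on the inner indices $\{|i|\le n-m\}$. Since $\chi_n(u)$ is central it suffices to show $[X,\mathcal{R}_{ij}(u)]=0$ for $X\in\mathfrak{g}_{n-m}$ whenever both $i,j$ are outer ($\mathcal{R}_{ij}(u)$ denoting the $(i,j)$ entry of $\mathcal{R}(u)$). The displayed commutation identity propagates through every power of $\mathcal{F}$ -- the telescoping $[Y,\mathcal{F}^k]=\sum_{l}\mathcal{F}^{l}[Y,\mathcal{F}]\mathcal{F}^{k-1-l}$ applies verbatim to $Y=X\otimes 1$ and to $Y=1\otimes X^{\mathsf T}$ -- so $[X\otimes 1,\mathcal{R}(u)]=[1\otimes X^{\mathsf T},\mathcal{R}(u)]$; taking $(i,j)$ entries, $[X,\mathcal{R}_{ij}(u)]=\sum_k\bigl((X^{\mathsf T})_{ik}\mathcal{R}_{kj}(u)-(X^{\mathsf T})_{kj}\mathcal{R}_{ik}(u)\bigr)$. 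For $X\in\mathfrak{g}_{n-m}$ the matrix $X^{\mathsf T}$ is supported on the inner block, so the right-hand side vanishes unless $i$ or $j$ is inner; in particular it vanishes when $i,j$ are both outer, which is exactly the claim.

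The main obstacle is the verification of \eqref{scruch3} for $\mathcal{S}(u)$ in part (i): it is a genuine, if mechanical, $R$-matrix computation, and pinning down the normalizing series $\chi_n$ correctly via \eqref{scruch4} takes care. Once (i) is settled, surjectivity and part (ii) are short.
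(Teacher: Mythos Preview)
The paper does not supply its own proof of this statement: it is cited from \cite{MO}, Propositions~4.14 and~4.15, and is used as a black box. So there is no in-paper argument to compare your proposal against.

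Your sketch is a reasonable reconstruction of the standard proof. Two small remarks. First, in your surjectivity step, not every off-diagonal $F_{ij}$ is literally a root vector (in the orthogonal case $F_{i,-i}=0$, for instance); but the nonzero ones do contain all simple root vectors, so the conclusion stands. Second, in part~(ii) you treat only $M=2m$, whereas the Remark after the statement also allows odd $M$ for $\mathrm{Y}^{+}$, where the ``outer'' index set is $\{0\}\cup\{n-m+1\le |i|\le n\}$; your argument goes through unchanged once one observes that for $X\in\mathfrak{g}_{n-m}$ the matrix $X^{\mathsf T}$ is supported on $\{1\le|i|\le n-m\}$, which is disjoint from that outer set as well.
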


\begin{rem} \emph{Here $\mathrm{Y}^{\mp}(M)$ is naturally embedded in $\mathrm{Y}^{\mp}(N)$ as a subalgebra generated by all $s_{ij}^{(L)}$ with $|i|,|j|=n-m+1,...,n$ and $L\in\mathbb{Z}_{\geq0}$ when $M$ is even. In case of odd $M$ subalgebra $\mathrm{Y}^+(M)$ is generated by $s_{ij}^{(L)}$ with $|i|,|j|=0,n-m+1,...,n$.}
\end{rem}

In particular, we have a natural homomorphism $\varphi_n:\mathrm{Y}^\mp(2)\to U(\mathfrak{g}_{n})^{\mathfrak{g}_{n-1}}$. The centralizer subalgebra $U(\mathfrak{g}_{n})^{\mathfrak{g}_{n-1}}$ acts naturally on any multiplicity space of the restriction of a $\mathfrak{g}_n$-module to $\mathfrak{g}_{n-1}$, so all such multiplicity spaces are naturally $\mathrm{Y}^\mp(2)$-modules. From Theorem 3.15 (ii) of $\cite{M}$ we have the following statement.

\begin{predl}\label{th:GTsp}
Let $\lambda=(\lambda_1,...,\lambda_n)$ and $\mu=(\mu_1,...,\mu_n)$ be highest weights of finite-dimensional irreducible representations of $\mathfrak{sp}_{2n}$ and $\mathfrak{sp}_{2n-2}$ and $V_\lambda^{\mu}$ denote the corresponding multiplicity space. Then the action of $\mathrm{Y}^-(2)$ on $V_\lambda^{\mu}$ defined as a composition of homomorphism $\varphi_n$ to $U(\mathfrak{sp}_{2n})^{\mathfrak{sp}_{2n-2}}$ and a natural projection is irreducible and isomorphic to
\begin{equation}
L(\alpha_1,\beta_1)\otimes...\otimes L(\alpha_n,\beta_n),
\end{equation}
where $\alpha_1=-1/2$ and 
\begin{equation}
\alpha_{i}=\min\{\lambda_{i-1},\mu_{i-1}\}-i+1/2,\;i=2,...,n,
\end{equation}
\begin{equation}
\beta_i=\max\{\lambda_i,\mu_i\}-i+1/2,\;i=1,...,n.
\end{equation}
\end{predl}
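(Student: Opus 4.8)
The plan is to reduce the statement to a result of Molev on the structure of the multiplicity space as a module over the twisted Yangian, and then identify the evaluation parameters by a careful bookkeeping of how $\varphi_n$ interacts with the fusion procedure. Concretely, Theorem 3.15 of \cite{M} is stated in terms of the restriction $\mathfrak{sp}_{2n}\downarrow\mathfrak{sp}_{2n-2}$ and the $\mathrm{Y}^-(2)$-action on $V_\lambda^\mu=\mathrm{Hom}_{\mathfrak{sp}_{2n-2}}(V_\mu,V_\lambda)$ coming from the Olshanski-type homomorphism; our job is essentially to match conventions and to extract the explicit list of $\alpha_i,\beta_i$ from the form of the weight constraints.

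First I would recall the relevant branching rule: by the classical Zhelobenko description, the multiplicity of $V_\mu$ in $\mathrm{Res}\,V_\lambda$ for $\mathfrak{sp}_{2n}\downarrow\mathfrak{sp}_{2n-2}$ equals the number of integer arrays $(\lambda'_1,\dots,\lambda'_n)$ interlacing both $\lambda$ and $\mu$ in the precise sense recorded in the $C$-type pattern of Section~\ref{ss:GTpatterns}. This multiplicity should be reproduced as $\dim\bigl(L(\alpha_1,\beta_1)\otimes\cdots\otimes L(\alpha_n,\beta_n)\bigr)^{\text{sing}}$ — equivalently the number of weight vectors of the tensor product with a fixed $\mathfrak{sl}_2$-weight — once one substitutes the claimed values $\alpha_1=-1/2$, $\alpha_i=\min\{\lambda_{i-1},\mu_{i-1}\}-i+1/2$, $\beta_i=\max\{\lambda_i,\mu_i\}-i+1/2$. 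So the first genuine step is to check that $\alpha_i-\beta_i\in\mathbb{Z}_{\ge0}$ for all $i$ (this uses the dominance of $\lambda,\mu$ together with the interlacing $\lambda_i\ge\mu_i\ge\lambda_{i+1}$ or $\mu_i\ge\lambda_i\ge\mu_{i+1}$ forced by nonzero multiplicity), and that the product of the one-dimensional ranges $[\beta_i,\alpha_i]\cap\mathbb{Z}$, shifted appropriately, is in bijection with the set of admissible $\lambda'$-rows. This is the combinatorial heart of why the parameters take exactly this form.

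Next I would invoke the Yangian-representation-theoretic input. By Propositions~4.14–4.15 of \cite{MO} (our Theorem~\ref{th:centralizer}), $\varphi_n$ realizes $U(\mathfrak{sp}_{2n})^{\mathfrak{sp}_{2n-2}}$ as a quotient of $\mathrm{Y}^-(2)$ (modulo the central character fixed by $\lambda$), so the $\mathrm{Y}^-(2)$-module $V_\lambda^\mu$ is a genuine finite-dimensional $\mathrm{Y}^-(2)$-module; by Drinfeld-type classification of finite-dimensional irreducibles for $\mathrm{Y}^-(2)$ (again as in \cite{M}, using the $sp(2)$-highest-weight theory and the Sklyanin determinant), such a module is determined by its Drinfeld polynomial / highest-weight data, and one shows it is isomorphic to a tensor product $L(\alpha_1,\beta_1)\otimes\cdots\otimes L(\alpha_n,\beta_n)$ of evaluation modules restricted from $\mathrm{Y}(2)$. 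The point is that the highest weight of $V_\lambda^\mu$ — i.e. the joint eigenvalue of $s_{11}(u)$ on the $\mathfrak{sp}_{2n-2}$-highest vector of the $\lambda$-isotypic piece — can be computed directly from the formula $\varphi_n:S(u)\mapsto\chi_n(u)(1-\mathcal{F}/(u+\tfrac{N-1}{2}))^{-1}$ by evaluating on a highest-weight vector of $V_\lambda$, where $\mathcal{F}$ acts triangularly. Comparing this rational function in $u$ with the known highest weight of $L(\alpha_1,\beta_1)\otimes\cdots\otimes L(\alpha_n,\beta_n)$ (a product of factors $\tfrac{u+\alpha_i}{u+\beta_i}$-type expressions coming from the evaluation homomorphism and coproduct) pins down the $\alpha_i,\beta_i$ up to the symmetry relation \eqref{scruch4}, and matching the $\mathfrak{sl}_2$-content $\alpha_i-\beta_i$ with the interlacing data fixes them completely.

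The main obstacle, I expect, is the precise bookkeeping of shifts: the half-integer offsets $-i+1/2$, the shift in $\chi_n(u)$ versus the shift $u+\tfrac{N\pm1}{2}$ in the evaluation map, and the reordering of tensor factors under the coproduct all conspire to produce the stated formulas, and getting every $\pm1/2$ right while keeping track of which factor of the tensor product corresponds to which index $i$ is genuinely delicate. A secondary subtlety is that $V_\lambda^\mu$ need only a priori be a subquotient of such a tensor product; proving it is actually isomorphic to the full tensor product (and in particular that the tensor product is irreducible as a $\mathrm{Y}^-(2)$-module for these special parameters) requires a dimension count matching the multiplicity from the branching rule, which is exactly where the combinatorial bijection of the first step is used. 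Since all of this is carried out in \cite{M}, the cleanest route is to cite Theorem 3.15(ii) there directly and limit the present argument to translating its hypotheses and conclusion into our normalization of $\varphi_n$.
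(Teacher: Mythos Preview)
Your proposal is correct and aligns with the paper's own treatment: the paper gives no proof at all, simply stating ``From Theorem 3.15 (ii) of \cite{M} we have the following statement'' and quoting the result. Your sketch of the underlying argument (branching rule, highest-weight computation via $\varphi_n$, dimension count) is reasonable context, but note one small slip: the multiplicity equals the full dimension $\prod_i(\alpha_i-\beta_i+1)$ of the tensor product, not the dimension of its singular part or of a fixed weight space.
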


Similarly, Theorem 3.14 (ii) of $\cite{M}$ implies the following:

\begin{predl}\label{th:GTo}
Let $\lambda=(\lambda_1,...,\lambda_n)$ and $\mu=(\mu_1,...,\mu_n)$ be highest weights of finite-dimensional irreducible representations of $\mathfrak{o}_{2n+1}$ and $\mathfrak{o}_{2n-1}$ and $V_\lambda^{\mu}$ denote the corresponding multiplicity space. Then the action of $\mathrm{Y}^+(2)$ on $V_\lambda^{\mu}$ defined as a composition of homomorphism $\varphi_n$ to $U(\mathfrak{o}_{2n+1})^{\mathfrak{o}_{2n-1}}$ and a natural projection is isomorphic to the direct sum of two irreducible submodules, $V_\lambda^{\mu}\simeq U\oplus U'$, where
\begin{equation}\label{first_direct_summand_first_case}
U=L\left(0,\beta_1\right)\otimes L\left(\alpha_2,\beta_2\right)\otimes...\otimes L\left(\alpha_n,\beta_n\right)\otimes W\left(1/2\right),
\end{equation}
\begin{equation}\label{second_direct_summand_first_case}
U'=L\left(-1,\beta_1\right)\otimes L\left(\alpha_2,\beta_2\right)\otimes...\otimes L\left(\alpha_n,\beta_n\right)\otimes W\left(1/2\right),
\end{equation}
if the $\lambda_i$ are integers (it is supposed that $U'=\{0\}$ if $\beta_1=0$); or
\begin{equation}\label{first_direct_summand_second_case}
U=L\left(-1/2,\beta_1\right)\otimes L\left(\alpha_2,\beta_2\right)\otimes...\otimes L\left(\alpha_n,\beta_n\right)\otimes W\left(0\right),
\end{equation}
\begin{equation}\label{second_direct_summand_second_case}
U'=L\left(-1/2,\beta_1\right)\otimes L\left(\alpha_2,\beta_2\right)\otimes...\otimes L\left(\alpha_n,\beta_n\right)\otimes W\left(1\right),
\end{equation}
if the $\lambda_i$ are half-integers, and the following notation is used
\begin{equation}
\alpha_{i}=\min\{\lambda_{i-1},\mu_{i-1}\}-i+1,\;i=2,...,n,
\end{equation}
\begin{equation}
\beta_i=\max\{\lambda_i,\mu_i\}-i+1,\;i=1,...,n.
\end{equation}
\end{predl}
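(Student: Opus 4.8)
\emph{Proof strategy.} The statement is the translation of \cite[Theorem 3.14(ii)]{M} into the present normalization, and the plan is to reconstruct that argument along the same lines as the symplectic case (Theorem~\ref{th:GTsp}): realize $V_\lambda^\mu$ through the two-step chain $\mathfrak{o}_{2n+1}\supset\mathfrak{o}_{2n}\supset\mathfrak{o}_{2n-1}$, determine its $\mathrm{Y}^+(2)$-module structure by computing the Drinfeld-type highest weight of $V_\lambda^\mu$ via the explicit form of $\varphi_n$ from Theorem~\ref{th:centralizer}, and then invoke the classification of finite-dimensional irreducible $\mathrm{Y}^+(2)$-modules by their highest weights to identify each constituent with the claimed tensor product. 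The genuinely new feature compared with Theorem~\ref{th:GTsp}, and the main obstacle, is the splitting $V_\lambda^\mu\simeq U\oplus U'$ into two irreducible summands; this reflects the presence of the intermediate algebra $\mathfrak{o}_{2n}$ and its Pfaffian subtlety, and is exactly the phenomenon the authors flag in the $\mathfrak{o}_{2n}$ remark.

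First I would compute dimensions. Both branchings $\mathfrak{o}_{2n+1}\downarrow\mathfrak{o}_{2n}$ and $\mathfrak{o}_{2n}\downarrow\mathfrak{o}_{2n-1}$ are multiplicity-free, so $\dim V_\lambda^\mu$ equals the number of $\mathfrak{o}_{2n}$-highest weights $\nu$ interlacing $\lambda$ from above and $\mu$ from below, counted with the binary datum distinguishing the two irreducible $\mathfrak{o}_{2n}$-modules that share the same truncated highest weight (this is where the Pfaffian enters). On the other side, $\dim L(\alpha_i,\beta_i)=\alpha_i-\beta_i+1$ and $\dim W(\delta)=1$, so $\dim U$ and $\dim U'$ are products of interlacing ranges determined by the $\alpha_i,\beta_i$ of the theorem. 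Matching the two counts — and checking that when $\beta_1=0$ one of the ranges collapses, forcing $U'=\{0\}$ in the integer case — yields the decomposition at the level of characters and identifies the binary $\mathfrak{o}_{2n}$-choice with the choice between $U$ and $U'$ (i.e. between $\alpha_1=0$ and $\alpha_1=-1$ for integer $\lambda$, and between $W(0)$ and $W(1)$ for half-integer $\lambda$).

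Next I would pin down the module structure by the highest weight. The commutative subalgebra of $\mathrm{Y}^+(2)$ generated by the coefficients of $s_{11}(u)$ and $s_{-1,-1}(u)$ — linked by the symmetry relation \eqref{scruch4} — acts by scalar series on the extremal vector of $V_\lambda^\mu$, namely the image in $\mathrm{Hom}_{\mathfrak{o}_{2n-1}}(V_\mu,V_\lambda)$ of the $\mathfrak{o}_{2n+1}$-highest vector of $V_\lambda$. Using $\varphi_n\colon S(u)\mapsto\chi_n(u)\bigl(1-\frac{\mathcal F}{u+\frac{N\pm1}{2}}\bigr)^{-1}$, these scalars are read off from the eigenvalues of $\mathcal F$ on the relevant $\mathfrak{o}_{2n+1}$-weight spaces together with the central series $\chi_n(u)$; this is the orthogonal analogue of the computation underlying Theorem~\ref{th:GTsp}. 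The outcome exhibits the highest weight of $V_\lambda^\mu$ as the product, under the $\mathrm{Y}^+(2)$-coideal coproduct, of the highest weights of the restrictions of the $L(\alpha_i,\beta_i)$ (with the stated $\alpha_i,\beta_i$, in particular $\alpha_1\in\{0,-1,-\tfrac12\}$ according to the case) and of the one-dimensional factor $W(\delta)$ given by \eqref{1_dim_repr_+}.

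Finally, since a finite-dimensional irreducible $\mathrm{Y}^+(2)$-module is determined up to isomorphism by its highest weight (\cite{MNO}, \cite{M}), each irreducible constituent of $V_\lambda^\mu$ is isomorphic to the predicted tensor product; combined with the dimension count of the second paragraph this forces $V_\lambda^\mu\simeq U\oplus U'$ exactly as stated, the integer versus half-integer dichotomy arising solely from whether the $\mathfrak{o}_{2n}$-weights have integer or half-integer entries. The delicate bookkeeping throughout is on the $\mathfrak{o}_{2n}$ side: matching its two spin-type modules with coinciding truncated highest weights to $U$ and $U'$ correctly, and verifying that $U'$ disappears precisely when $\beta_1=0$.
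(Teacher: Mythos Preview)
The paper does not give its own proof of this statement: it is quoted directly as a consequence of \cite[Theorem~3.14(ii)]{M}, with no argument beyond the citation. So there is nothing in the paper to compare your proposal against; what you have sketched is essentially an outline of Molev's original proof, and as such it is broadly correct.

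One point worth sharpening: since $V_\lambda^\mu$ is not irreducible here, the phrase ``the extremal vector of $V_\lambda^\mu$'' is misleading. You need to exhibit \emph{two} $\mathrm{Y}^+(2)$-highest-weight vectors (one for each summand $U$, $U'$), verify that $s_{-1,1}(u)$ annihilates each, compute the eigenvalue series of $s_{11}(u)$ on each separately, and then match each with the highest weight of the corresponding tensor product. In Molev's argument this pair of vectors arises naturally from the intermediate $\mathfrak{o}_{2n}$-branching you mention, but the identification of which $\mathfrak{o}_{2n}$-extremal vector gives $\alpha_1=0$ versus $\alpha_1=-1$ (integer case) or $\delta=0$ versus $\delta=1$ (half-integer case) is precisely the bookkeeping you flag as delicate, and it has to be carried out rather than asserted.
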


\section{Shift of argument subalgebras and their quantization.}
\subsection{Construction of Poisson-commutative subalgebras.}

A semisimple complex Lie algebra $\mathfrak{g}$ can be identified with its dual space $\mathfrak{g}^*$ via the Killing form, therefore we can think of Poisson algebra $S(\mathfrak{g})$ as the space of functions on $\mathfrak{g}$. By the classical result of Chevalley, the Poisson center $S(\mathfrak{g})^{\mathfrak{g}}$ is a free polynomial algebra in $n=\text{rk}\ \mathfrak{g}$ generators $\Phi_1,\ldots,\Phi_n$. For $\mathfrak{g}=\mathfrak{sp}_{2n}$ and $\mathfrak{g}=\mathfrak{o}_{2n+1}$ we can write these generators explicitly as follows. Consider the following matrix with the coefficients from $S(\mathfrak{g})$:
\begin{equation}\label{F-matrix}
F:=(F_{ij})_{i,j=-n}^n=\sum_{i,j=-n}^nF_{ij}\otimes E_{ij}\in\mathfrak{g}\otimes\textrm{End}(W)
\end{equation} 
Then the following elements are free generators of $S(\mathfrak{g})^{\mathfrak{g}}$:

\begin{equation}\label{center_generators}
\Phi_r=\text{Tr}\ F^{2r} \;(r=1,2,\ldots,n)
\end{equation}

%(see \cite{H1}):
%\begin{equation}\label{center_generators}
%\Phi_r=\sum_{i_1<...<i_{2r}}M_{i_1,...,i_{2r}}\;(r=1,2,\ldots,n),
%\end{equation}
%where $$M_{i_1,...,i_{2r}}=\sum_{\sigma\in\mathfrak{S}_{2r}}sgn(\sigma)F_{i_1,i_{\sigma(1)}}\cdot...\cdot F_{i_{2r},i_{\sigma(2r)}}$$ is the principal minor, corresponding to the set of indices $(i_1,...,i_{2r})$.

\begin{rem} \emph{When appropriate we regard the matrix $F$ and its powers as elements of $S(\mathfrak{g})\otimes\textrm{End}(W)$ or as elements of $U(\mathfrak{g})\otimes\textrm{End}(W)$. In the latter case we denote it by $\mathcal{F}$.}
\end{rem}

For any element $\mu\in\mathfrak{g}$ we define a Poisson-commutative subalgebra $A_{\mu}$ called \textit{shift of argument} or \textit{Mischenko-Fomenko algebra}. This algebra is generated by all possible derivatives of $\Phi_r$ along $\mu$, i.e. by all elements of the form $\partial_{\mu}^k\Phi_r$.

Consider regular power series in the sense of Vinberg's work \cite{V1} $\mu(\varepsilon)=\mu_0+\mu_1\varepsilon+...+\mu_l\varepsilon^l$. They are such series from $\mathfrak{g}\otimes\mathbb{C}[[\varepsilon]]$ that for sufficiently small nonzero values of $\varepsilon$ elements $\mu(\varepsilon)$ are regular and semisimple. 

The dimension of the $k$-th graded component $(A_{\mu(\varepsilon)})_k=A_{\mu(\varepsilon)}\cap S^k(\mathfrak{g})$ does not depend on $\varepsilon$ for sufficiently small nonzero values of $\varepsilon$. Since the coefficients of the derivatives of $\mathfrak{g}$-invariants are convergent power series in $\varepsilon$, it follows that Plücker coordinates of subspace $(A_{\mu(\varepsilon)})_k\subset S^k(\mathfrak{g})$ are convergent power series in $\varepsilon$. Hence there is a well defined limit $A_k:=\lim_{\varepsilon\to0}(A_{\mu(\varepsilon)})_k$ in the corresponding Grassmann variety. That allows us to define the commutative subalgebra $\lim_{\varepsilon\to0}A_{\mu(\varepsilon)}:=\oplus A_k$. It is called \textit{limit shift of argument subalgebra}.

The following theorem provides us with maximal Poisson-commutative subalgebras constructed from regular semisimple $\mu\in\mathfrak{g}$ and regular series $\mu(\varepsilon)=\mu_0+\mu_1\varepsilon+...+\mu_l\varepsilon^l$ such that all $\mu_i$ belong to some fixed Cartan subalgebra $\mathfrak{h}$ and $\mathfrak{z}(\mu_0)\cap\mathfrak{z}(\mu_1)\cap...\cap\mathfrak{z}(\mu_l)=\mathfrak{h}$ (\cite{T}, Theorem 1 and Theorem 2).

\begin{predl}
(i) For regular semisimple element $\mu\in\mathfrak{g}$ all partial derivatives $\partial^k_{\mu}\Phi_r$ with $k=0,1,\ldots,\deg \Phi_r$ freely generate a maximal commutative subalgebra $A_{\mu}$ of transcendence degree equal to $\frac{1}{2}(\dim\mathfrak{g}+\text{rk}\ \mathfrak{g})$ in the Poisson algebra $S(\mathfrak{g})$.

(ii) Limit shift of argument subalgebras $\lim\limits_{\varepsilon\to0}A_{\mu(\varepsilon)}$ are maximal commutative subalgebras in $S(\mathfrak{g})$ of the same transcendence degree $\frac{1}{2}(\dim\mathfrak{g}+\text{rk}\ \mathfrak{g})$.
\end{predl}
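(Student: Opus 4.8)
The plan is to reduce both statements to the known theory of Mishchenko--Fomenko subalgebras for regular semisimple parameters and their limits, as developed by Mishchenko--Fomenko~\cite{MF}, Vinberg~\cite{V1}, Shuvalov~\cite{Sh} and refined by Tarasov~\cite{T}. For part (i), I would first recall that the generators $\Phi_r = \mathrm{Tr}\, F^{2r}$ ($r=1,\dots,n$) freely generate $S(\mathfrak{g})^{\mathfrak{g}}$, so their degrees are $2,4,\dots,2n$, and $\sum_r (\deg\Phi_r) = n(n+1)$. A direct count then gives $\sum_{r=1}^n(\deg\Phi_r + 1)$ partial derivatives $\partial_\mu^k\Phi_r$, $k=0,\dots,\deg\Phi_r$, and one checks this equals $\tfrac12(\dim\mathfrak{g}+\mathrm{rk}\,\mathfrak{g})$: for $\mathfrak{sp}_{2n}$ (resp.\ $\mathfrak{o}_{2n+1}$) one has $\dim\mathfrak{g}=2n^2+n$ (resp.\ $2n^2+n$) and $\mathrm{rk}=n$, so $\tfrac12(\dim\mathfrak{g}+n) = n^2+n = n(n+1)$, matching $\sum_r(\deg\Phi_r+1) = \sum_r 2r + n = n(n+1)$. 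The Poisson-commutativity of the $\partial_\mu^k\Phi_r$ is the classical ``argument shift'' lemma: shifting the argument $x\mapsto x+z\mu$ in a Casimir and expanding in $z$ produces a Poisson-commutative family because the Casimirs of the shifted Poisson bracket Poisson-commute with each other and their leading/lower terms do too. The nontrivial input — algebraic independence of all the $\partial_\mu^k\Phi_r$ and maximality of the subalgebra they generate — is exactly Mishchenko--Fomenko's theorem for regular semisimple $\mu$; I would simply cite \cite{MF} (or Tarasov~\cite{T}, Theorem~1, as the excerpt already does) rather than reprove it, the key geometric fact being that the differentials of these functions are linearly independent on a Zariski-dense subset of $\mathfrak{g}$, which can be verified at a generic point using regularity of $\mu$.

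For part (ii), the strategy is a flatness/semicontinuity argument in the Grassmannian. As spelled out in the paragraph preceding the statement, for a regular series $\mu(\varepsilon)$ the Pl\"ucker coordinates of the $k$-th graded piece $(A_{\mu(\varepsilon)})_k \subset S^k(\mathfrak{g})$ are convergent power series in $\varepsilon$, so $A_k := \lim_{\varepsilon\to 0}(A_{\mu(\varepsilon)})_k$ exists and $\lim_{\varepsilon\to0}A_{\mu(\varepsilon)} = \bigoplus_k A_k$ is a well-defined graded subalgebra. Commutativity of the limit is automatic: the Poisson bracket is continuous and each $A_{\mu(\varepsilon)}$ is Poisson-commutative for small $\varepsilon\neq 0$, so the bracket vanishes on the limit subspace as well. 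The transcendence degree is lower semicontinuous under such degenerations (the dimension of the generic fibre of the moment-type map cannot jump down in the limit) while $\dim(A_{\mu(\varepsilon)})_k$ is \emph{constant} in $\varepsilon$, so $\mathrm{tr.deg}\big(\lim_{\varepsilon\to0}A_{\mu(\varepsilon)}\big) \geq \mathrm{tr.deg}\, A_{\mu(\varepsilon)} = \tfrac12(\dim\mathfrak{g}+\mathrm{rk}\,\mathfrak{g})$ for $\varepsilon\neq 0$; since a Poisson-commutative subalgebra of $S(\mathfrak{g})$ cannot exceed this bound, equality holds, and maximality follows because a strictly larger commutative subalgebra would violate the same dimension bound. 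Again, the condition $\mathfrak{z}(\mu_0)\cap\cdots\cap\mathfrak{z}(\mu_l) = \mathfrak{h}$ is precisely what Shuvalov~\cite{Sh} and Tarasov~\cite{T} use to guarantee the limit retains maximal transcendence degree, so I would invoke \cite{T}, Theorem~2.

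The main obstacle is part (ii): one must be careful that the naive limit $\bigoplus_k A_k$ really is a \emph{subalgebra} (closed under multiplication, not just a graded subspace) and that it has the right transcendence degree rather than something smaller — a priori the limit of a family of maximal commutative subalgebras could degenerate badly. The resolution is that $A_k\cdot A_m \subseteq A_{k+m}$ holds in the limit because it holds for each $\varepsilon\neq0$ and multiplication $S^k\otimes S^m\to S^{k+m}$ is continuous, so the product of limit subspaces lies in the limit subspace; and the transcendence-degree bound is forced from below by constancy of the Hilbert function of $A_{\mu(\varepsilon)}$ in $\varepsilon$ together with the fact that the Hilbert function determines the transcendence degree (via Krull dimension of the associated graded algebra). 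This is exactly the content packaged in \cite{Sh}, \cite{T}; I would reference those results for the delicate maximality claim rather than reconstruct the argument in detail here.
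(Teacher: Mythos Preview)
The paper does not give its own proof of this statement; it is quoted directly from \cite{T} (Theorems~1 and~2), and your proposal correctly identifies that the substance lies in the Mishchenko--Fomenko, Shuvalov and Tarasov theory and proposes to cite those same sources, so your approach matches the paper's. One minor slip worth fixing in your sketch: the top derivative $\partial_\mu^{\deg\Phi_r}\Phi_r$ is a constant, so the free generators are $\partial_\mu^k\Phi_r$ for $k=0,\ldots,\deg\Phi_r-1$, giving $\sum_r\deg\Phi_r=2+4+\cdots+2n=n(n+1)$ generators, not $\sum_r(\deg\Phi_r+1)=n(n+1)+n$; with this correction the count matches $\tfrac12(\dim\mathfrak g+\mathrm{rk}\,\mathfrak g)$ as you intended.
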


\subsection{Explicit description of limit shift of argument subalgebras.}

In \cite{Sh} Shuvalov gives an explicit description of the limit shift of argument subalgebras in the following terms. Consider a regular power series $\mu(\varepsilon)=\mu_0+\mu_1\varepsilon+...+\mu_l\varepsilon^l$ and set $$\mathfrak{z}_i=\mathfrak{z}_{\mathfrak{g}}(\mu_0)\cap...\cap\mathfrak{z}_{\mathfrak{g}}(\mu_i)\;\;(i=0,...l),\;\mathfrak{z}_{-1}=\mathfrak{g}.$$

Clearly, we have $\mathfrak{z}_{l}=\mathfrak{h}$. We define subalgebras $A_k$ ($k=0,...,l+1$) of Poisson algebra $S(\mathfrak{g})$ inductively:
\begin{itemize}
\item $A_0=A_{\mu_0}$;
\item $A_k$ is the subalgebra generated by $A_{k-1}$ and the derivatives of the invariants of $\mathfrak{z}_{k-1}$ in $S(\mathfrak{z}_{k-1})$ along $\mu_k$;
\item $A_{l+1}$ is the subalgebra generated by $A_l$ and $\mathfrak{z}_l=\mathfrak{h}$.
\end{itemize}

The following result holds for regular power series $\mu(\varepsilon)$ with all $\mu_i$'s lying in some Cartan subalgebra $\mathfrak{h}\subset\mathfrak{g}$ and satisfying $\mathfrak{z}(a_0)\cap\mathfrak{z}(\mu_1)\cap...\cap\mathfrak{z}(\mu_l)=\mathfrak{h}$ (\cite{Sh}, Theorem 1).

\begin{predl}
(i) For reductive complex Lie algebra $\mathfrak{g}$ and regular series $\mu(\varepsilon)=\mu_0+\mu_1\varepsilon+...+\mu_l\varepsilon^l$ the limit shift of argument subalgebra coincide with $A_{l+1}$: $$\lim\limits_{\varepsilon\to0}A_{\mu(\varepsilon)}=A_{l+1}.$$
(ii) Limit shift of argument subalgebra $\lim\limits_{\varepsilon\to0}A_{\mu(\varepsilon)}$ is freely generated by some of the derivatives of the invariants of $\mathfrak{z}_{k-1}$ along $\mu_k$ for $k=0,\ldots,l$ and by $\mathfrak{z}_l=\mathfrak{h}$.
\end{predl}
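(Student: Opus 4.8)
The plan is to prove both parts by induction on the length $l$ of the series $\mu(\varepsilon)$, using the generating-function repackaging of the shift-of-argument construction, a Vinberg-type lemma on invariants of centralizers, and a Poincar\'e-series count. For a $\mathfrak{g}$-invariant $\Phi$ of degree $d$ write $\Phi(x+z\mu)=\sum_{j=0}^{d}z^{j}\,\partial^{(j)}_{\mu}\Phi(x)$, so $A_{\mu}$ is generated by the coefficients $\partial^{(j)}_{\mu}\Phi_{r}$. Since for $\mu=\mu(\varepsilon)$ these depend polynomially on $\varepsilon$ and the $\varepsilon$-adic valuation on $S(\mathfrak{g})[\varepsilon]$ is multiplicative, the graded family $\big((A_{\mu(\varepsilon)})_{k}\big)_{k}$ is flat over $\mathbb{C}[[\varepsilon]]$; hence $\lim_{\varepsilon\to0}A_{\mu(\varepsilon)}$ is the graded subalgebra spanned, in each degree, by the lowest-$\varepsilon$-order coefficients of homogeneous elements of $A_{\mu(\varepsilon)}$ --- in particular it contains every ``leading term'' $\varepsilon^{-m}f(\varepsilon)\big|_{\varepsilon=0}$ of a homogeneous $f(\varepsilon)\in A_{\mu(\varepsilon)}$ of $\varepsilon$-order $m$ --- and, by the flatness already recorded above, it has the same Poincar\'e series as $A_{\mu}$ for regular semisimple $\mu$.

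For the base case $l=0$, $\mu_{0}$ is regular semisimple, $\mathfrak{z}_{0}=\mathfrak{h}$, and $\lim A_{\mu(\varepsilon)}=A_{\mu_{0}}$; the linear coefficient $\partial^{(\deg\Phi_{r}-1)}_{\mu_{0}}\Phi_{r}$ is, up to a scalar, the functional $\langle d\Phi_{r}|_{\mu_{0}},\,\cdot\,\rangle$, which by invariance lies in $\mathfrak{z}(\mu_{0})=\mathfrak{h}$ and, by regularity, the $n$ of these span $\mathfrak{h}$; so $\mathfrak{h}\subset A_{\mu_{0}}$, $A_{1}=A_{\mu_{0}}=\lim A_{\mu(\varepsilon)}$, and freeness is the earlier Proposition on $A_{\mu}$ for regular semisimple $\mu$. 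For $l\ge1$, write $\mu(\varepsilon)=\mu_{0}+\varepsilon\nu(\varepsilon)$ with $\nu(\varepsilon)=\mu_{1}+\varepsilon\mu_{2}+\dots+\varepsilon^{l-1}\mu_{l}$, and note that all $\mu_{i}$ lie in $\mathfrak{h}\subset\mathfrak{z}_{0}=\mathfrak{z}(\mu_{0})$, that $\mu_{0}$ is central in the reductive algebra $\mathfrak{z}_{0}$, and that the centralizer chain of $\nu(\varepsilon)$ inside $\mathfrak{z}_{0}$ is the tail $\mathfrak{z}_{0}\supset\mathfrak{z}_{1}\supset\cdots\supset\mathfrak{z}_{l}$ of the original chain. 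The crux is that $A_{l+1}$, which by the inductive hypothesis applied inside $\mathfrak{z}_{0}$ is exactly $\big\langle A^{\mathfrak{g}}_{\mu_{0}},\ \lim_{\varepsilon\to0}A^{\mathfrak{z}_{0}}_{\nu(\varepsilon)}\big\rangle$ (the second piece viewed in $S(\mathfrak{g})$ via $S(\mathfrak{z}_{0})\subset S(\mathfrak{g})$), is contained in $\lim_{\varepsilon\to0}A^{\mathfrak{g}}_{\mu(\varepsilon)}$.

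To establish this containment one argues along the chain. The generators $\partial^{(j)}_{\mu_{0}}\Phi_{r}$ of $A^{\mathfrak{g}}_{\mu_{0}}$ are the $\varepsilon^{0}$-coefficients of $\partial^{(j)}_{\mu(\varepsilon)}\Phi_{r}$, so they lie in the limit. For the rest one needs a Vinberg-type lemma: for a $\mathfrak{z}_{0}$-invariant $\Psi$ it realizes $\partial^{(i)}_{\mu_{1}}\Psi$ (and its $\nu(\varepsilon)$-deformation) as the $\varepsilon$-leading term --- \emph{after the lower $\varepsilon$-orders have been used up} --- of a suitable $\mathbb{C}[\varepsilon]$-combination of the $\partial^{(j)}_{\mu(\varepsilon)}\Phi_{r}$ restricted along $\mathfrak{z}_{0}$, using that $\mu_{0}$ is central in $\mathfrak{z}_{0}$ to keep the restrictions compatible; iterating this down the chain produces all generators of $\lim_{\varepsilon\to0}A^{\mathfrak{z}_{0}}_{\nu(\varepsilon)}$. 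Once $A_{l+1}\subset\lim_{\varepsilon\to0}A^{\mathfrak{g}}_{\mu(\varepsilon)}$ is known, part (i) follows from a Poincar\'e-series count: one computes the series of $A_{l+1}$ directly from the recursion --- at each link the new generators form a free shift-of-argument family for the reductive $\mathfrak{z}_{k-1}$, of the degrees read off from the fundamental invariants of $\mathfrak{z}_{k-1}$, and are independent of the earlier levels --- obtaining the series of $A_{\mu}$ for regular semisimple $\mu$, which is also that of $\lim_{\varepsilon\to0}A_{\mu(\varepsilon)}$; a graded subalgebra with this series must coincide with the whole. Part (ii) is the freeness established in this count.

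The main obstacle is exactly the Vinberg-type lemma together with the bookkeeping of $\varepsilon$-orders across several scales at once: one must verify that restriction to $\mathfrak{z}_{k-1}$ on invariants, corrected by derivatives along the central directions $\mu_{0},\dots,\mu_{k-1}$, surjects onto precisely the $\mu_{k}$-derivatives of $\mathfrak{z}_{k-1}$-invariants that the construction prescribes --- enough that the containment holds, yet not so much that extraneous elements survive in the limit --- and that the degree arithmetic at each reductive link, resting on $\sum_{r}(\deg\Phi_{r}-1)=|\Delta_{+}|$ applied to each $\mathfrak{z}_{k-1}$, sums to the Poincar\'e series of $A_{\mu}$ for regular semisimple $\mu$. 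The remaining ingredients --- multiplicativity of the $\varepsilon$-adic valuation, flatness of the graded family, and the value of the transcendence degree from the earlier Proposition --- are routine.
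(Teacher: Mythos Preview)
The paper does not prove this statement; it is quoted as Theorem~1 of Shuvalov~\cite{Sh}. So there is no proof in the paper to compare your proposal against.

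As for the proposal itself: the overall architecture --- induction on $l$, passing to $\nu(\varepsilon)=\mu_1+\varepsilon\mu_2+\cdots$ inside $\mathfrak{z}_0$, establishing the containment $A_{l+1}\subset\lim_{\varepsilon\to0}A_{\mu(\varepsilon)}$, then closing with a Poincar\'e-series count --- is the right one, and is essentially Shuvalov's strategy. But you yourself flag the ``Vinberg-type lemma'' as the main obstacle and do not prove it; what you have written is a description of what the lemma must accomplish, not an argument that it holds. This step is the entire content of the theorem. Concretely, one must show that the $\varepsilon$-leading coefficients of suitable $\mathbb{C}[\varepsilon]$-combinations of the $\partial^{(j)}_{\mu(\varepsilon)}\Phi_r$ recover \emph{all} the $\mu_1$-derivatives of a full generating set of $\mathfrak{z}_0$-invariants. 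This relies on the surjectivity of the restriction $S(\mathfrak{g})^{\mathfrak{g}}\to S(\mathfrak{z}_0)^{\mathfrak{z}_0}$ (a Chevalley/Levi-type statement) and on a precise control of which $\varepsilon$-orders carry which pieces; neither is supplied. Without this the containment $A_{l+1}\subset\lim A_{\mu(\varepsilon)}$ is unproved.

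A second gap is in the Poincar\'e-series step. Matching the series of $A_{l+1}$ with that of $A_\mu$ for regular $\mu$ requires knowing that the proposed generators at the successive levels are algebraically independent, not merely that their degrees add up correctly. You assert that ``at each link the new generators form a free shift-of-argument family'' and are ``independent of the earlier levels,'' but independence across levels is exactly what needs proof (typically by exhibiting a point where the differentials are linearly independent). In short: the plan is sound, but both the key containment lemma and the independence of generators are stated rather than established, so the proposal is a sketch, not a proof.
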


\subsection{Quantization of shift of argument subalgebras.}
The problem of lifting shift of argument commutative subalgebras to universal enveloping algebra was solved in the case of regular semisimple $\mu\in\mathfrak{g}^*$ in \cite{R}. Here is the key theorem which provides us with quantization of $A_{\mu}$.

\begin{predl}\label{th:quantization}\cite{R} (i) For regular $\mu\in\mathfrak{g}^*$ there exists commutative subalgebra $\mathcal{A}_{\mu}\subset U(\mathfrak{g})$ of the universal enveloping algebra such that $gr\mathcal{A}_{\mu}=A_{\mu}$.\\
(ii) Moreover, there exists commutative subalgebra $\widehat{A}\subset(U(\mathfrak{g})\otimes S(\mathfrak{g}))^{\mathfrak{g}}$, such that for evaluation map at $\mu\in\mathfrak{g}^*$:
\begin{equation}
ev_{\mu}:U(\mathfrak{g})\otimes S(\mathfrak{g})=U(\mathfrak{g})\otimes\mathbb{C}[\mathfrak{g}^*]\longrightarrow U(\mathfrak{g}),
\end{equation}
we have $ev_{\mu}(\widehat{A})=\mathcal{A}_{\mu}$.
\end{predl}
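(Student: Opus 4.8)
The plan is to follow \cite{R} and realise $\mathcal{A}_\mu$ as a commutative quotient of the Feigin--Frenkel centre of the affine Kac--Moody algebra at the critical level, so that commutativity is automatic and the equality $\mathrm{gr}\,\mathcal{A}_\mu=A_\mu$ reduces to a computation of principal symbols. First I would recall the relevant facts. Let $\widehat{\mathfrak{g}}=\mathfrak{g}((t))\oplus\mathbb{C}K$ and let $V=\mathrm{Ind}_{\mathfrak{g}[[t]]\oplus\mathbb{C}K}^{\widehat{\mathfrak{g}}}\mathbb{C}$ be the vacuum module at the critical level $K=-h^\vee$. Its centre $\mathfrak{z}(\widehat{\mathfrak{g}})=\mathrm{End}_{\widehat{\mathfrak{g}}}(V)$ is a commutative algebra; by Frobenius reciprocity $\mathfrak{z}(\widehat{\mathfrak{g}})=V^{\mathfrak{g}[[t]]}$, which under the PBW identification $V\cong U(t^{-1}\mathfrak{g}[t^{-1}])$ becomes a commutative subalgebra of $U(t^{-1}\mathfrak{g}[t^{-1}])$. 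By the Feigin--Frenkel theorem this subalgebra is a free polynomial algebra generated by certain Segal--Sugawara vectors $S_1,\dots,S_\ell$ (where $\ell=\mathrm{rk}\,\mathfrak{g}$) together with all their $\partial_t$-derivatives, and the principal symbols of the $S_i$ freely generate the classical (Poisson) limit of $\mathfrak{z}(\widehat{\mathfrak{g}})$, namely the classical Segal--Sugawara currents associated with the Chevalley generators $\Phi_1,\dots,\Phi_\ell$ of $S(\mathfrak{g})^{\mathfrak{g}}$.

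Next I would construct, for regular $\mu\in\mathfrak{g}^*$, an evaluation homomorphism $\Psi_\mu\colon\mathfrak{z}(\widehat{\mathfrak{g}})\to U(\mathfrak{g})$. The idea is to let the centre act on a suitable smooth $\widehat{\mathfrak{g}}$-module $M_\mu$ ``concentrated at one point with a pole of order two governed by $\mu$'': $M_\mu$ is cyclically generated by a vector $w_\mu$ annihilated by $\mathfrak{g}\otimes t^2\mathbb{C}[[t]]$, on which $x\otimes t$ acts by $\langle\mu,x\rangle$ up to lower order terms, and on which the constant loops $\mathfrak{g}=\mathfrak{g}\otimes t^0$ act so that $U(\mathfrak{g})\cdot w_\mu$ is a free $U(\mathfrak{g})$-module. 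Since every element of $\mathfrak{z}(\widehat{\mathfrak{g}})$ acts on $M_\mu$ by a $\widehat{\mathfrak{g}}$-endomorphism, hence commutes with the $\mathfrak{g}$-action and preserves $U(\mathfrak{g})\cdot w_\mu$, evaluation at $w_\mu$ gives an algebra homomorphism $\Psi_\mu$, and $\mathcal{A}_\mu:=\mathrm{Im}\,\Psi_\mu$ is automatically commutative. Concretely, $\Psi_\mu$ sends $S_i$ and its $\partial_t$-derivatives to the operators extracted from the Laurent expansion in $w$ of the field of $M_\mu$ attached to $S_i$, evaluated on $w_\mu$; in the classical limit this expansion reproduces $\Phi_i(x+z\mu)=\sum_k\tfrac{z^k}{k!}\,\partial^k_\mu\Phi_i(x)$ (with $z=w^{-2}$), so the images of $S_i,\partial_t S_i,\partial_t^2 S_i,\dots$ have principal symbols proportional to $\Phi_i,\partial_\mu\Phi_i,\partial_\mu^2\Phi_i,\dots$, of the correct PBW degrees $2i,2i-1,2i-2,\dots$ respectively.

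It then remains to identify $\mathrm{gr}\,\mathcal{A}_\mu$ and to assemble the universal family. The homomorphism $\Psi_\mu$ respects the PBW filtrations, and by the previous step its associated graded carries the generators of $\mathrm{gr}\,\mathfrak{z}(\widehat{\mathfrak{g}})$ onto the argument-shift generators $\partial_\mu^k\Phi_i$, whence $\mathrm{gr}\,\mathcal{A}_\mu\supseteq A_\mu$. On the other hand $\mathrm{gr}\,\mathcal{A}_\mu$ is a Poisson-commutative subalgebra of $S(\mathfrak{g})$ (for commuting $a,b\in U(\mathfrak{g})$ one has $\{\bar a,\bar b\}=0$), while $A_\mu$ is a maximal Poisson-commutative subalgebra — for regular semisimple $\mu$ this is the theorem quoted above, and the general regular case follows by a flatness argument or a direct dimension count — so $\mathrm{gr}\,\mathcal{A}_\mu=A_\mu$, which is part (i). For part (ii) I would run the construction of $M_\mu$ over the base $S(\mathfrak{g})=\mathbb{C}[\mathfrak{g}^*]$, replacing the character $x\otimes t\mapsto\langle\mu,x\rangle$ by the tautological one $x\otimes t\mapsto x\in\mathfrak{g}\subset S(\mathfrak{g})$; this produces a single homomorphism $\Psi\colon\mathfrak{z}(\widehat{\mathfrak{g}})\to U(\mathfrak{g})\otimes S(\mathfrak{g})$ with $ev_\mu\circ\Psi=\Psi_\mu$, so $\widehat{A}:=\mathrm{Im}\,\Psi$ satisfies $ev_\mu(\widehat{A})=\mathcal{A}_\mu$. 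Finally the whole construction is $\mathfrak{g}$-equivariant: $\mathfrak{g}$ acts on $\widehat{\mathfrak{g}}$ through the constant loops and fixes $\mathfrak{z}(\widehat{\mathfrak{g}})$, acts on $S(\mathfrak{g})$ coadjointly and on $U(\mathfrak{g})$ adjointly, so $\Psi$ is $\mathfrak{g}$-equivariant and $\widehat{A}\subseteq(U(\mathfrak{g})\otimes S(\mathfrak{g}))^{\mathfrak{g}}$.

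The hard part is the symbol computation behind the inclusion $\mathrm{gr}\,\mathcal{A}_\mu\supseteq A_\mu$: one must verify that the classical limit of $\Psi_\mu$ really carries the classical Segal--Sugawara currents to the $\partial_\mu^k\Phi_i$, and this is exactly where the structure of the Feigin--Frenkel centre is used and what dictates the correct normalisation of the auxiliary module $M_\mu$ — so that central elements preserve $U(\mathfrak{g})\cdot w_\mu$ (giving a homomorphism with commutative image for free) while the leading terms of the images are precisely the argument-shift generators. Once this input is in place, everything else — commutativity, Poisson-commutativity of the associated graded, compatibility with $ev_\mu$, and $\mathfrak{g}$-equivariance — is formal.
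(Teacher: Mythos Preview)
The paper does not give its own proof of this statement: it is quoted verbatim from \cite{R} and used as a black box. Your sketch is a faithful outline of the argument in \cite{R} --- realising $\mathcal{A}_\mu$ as the image of the Feigin--Frenkel centre $\mathfrak{z}(\widehat{\mathfrak{g}})$ under an evaluation homomorphism, checking that the principal symbols of the images of the Segal--Sugawara generators are the shift-of-argument generators $\partial_\mu^k\Phi_i$, and then invoking maximality of $A_\mu$ to conclude $\mathrm{gr}\,\mathcal{A}_\mu=A_\mu$; the universal version over $S(\mathfrak{g})$ is obtained exactly as you say. So there is nothing to compare: your approach \emph{is} the approach of the cited reference, and the present paper simply imports the result.
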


A result similar to Theorem~\ref{th:quantization} (ii) holds for limit shift of argument subalgebras as well. It has been proved in \cite{KR} (Theorem 10.7 and Theorem 10.8). Namely, the lift $\lim\limits_{\varepsilon\rightarrow0}\mathcal{A}_{\mu\left(\varepsilon\right)}$ of the limit Mischenko-Fomenko algebra corresponding to regular series $\mu(\varepsilon)=\mu_0+\mu_1\varepsilon+...+\mu_l\varepsilon^{l}$ can be constructed inductively:
\begin{itemize}
\item $\mathcal{A}_0=ev_{\mu_0}(\widehat{A}_0)\subset U\left(\mathfrak{g}\right)^{\mathfrak{z}_0}$ ($\widehat{A}_0$ is provided by Theorem 5(ii));
\item By Theorem 5, we have the universal quantum shift of argument subalgebra $\widehat{A}_k\subset\left[U\left(\mathfrak{z}_k\right)\otimes S\left(\mathfrak{z}_k\right)\right]^{\mathfrak{z}_k}$, then $\mathcal{A}_{k+1}=ev_{\mu_{k+1}}(\widehat{A}_k)\cdot\mathcal{A}_{k}$.
\end{itemize}
\begin{predl}\label{th:quantShuvalov}\cite{KR} Quantization of the limit shift of argument subalgebra coincides with $\mathcal{A}_l$: $$\lim_{\varepsilon\rightarrow0}\mathcal{A}_{\mu\left(\varepsilon\right)}=\mathcal{A}_l.$$
\end{predl}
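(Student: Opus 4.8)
This is \cite{KR}, Theorems~10.7 and~10.8; here is the strategy I would follow. The plan is to show that the two commutative subalgebras $\lim_{\varepsilon\to0}\mathcal A_{\mu(\varepsilon)}$ and $\mathcal A_l$ of $U(\mathfrak g)$ have the same associated graded subalgebra of $S(\mathfrak g)$, and that $\mathcal A_l$ is contained in $\lim_{\varepsilon\to0}\mathcal A_{\mu(\varepsilon)}$; equality of a filtered subalgebra contained in another one with the same symbol then follows from a degreewise dimension count.

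For the limit, one first checks it is a well-defined commutative subalgebra: the convergence argument recalled above for $S(\mathfrak g)$ applies verbatim to the filtered algebra $U(\mathfrak g)$, since for small $\varepsilon\neq0$ the dimension of each filtered piece $\mathcal A_{\mu(\varepsilon)}\cap U(\mathfrak g)_{\le k}$ is constant and the Pl\"ucker coordinates of these subspaces are convergent power series in $\varepsilon$, and the limit stays commutative because the commutator is a polynomial (hence continuous) map. A semicontinuity-plus-dimension-count argument then shows that passing to symbols commutes with this flat limit, $\operatorname{gr}\bigl(\lim_{\varepsilon\to0}\mathcal A_{\mu(\varepsilon)}\bigr)=\lim_{\varepsilon\to0}\operatorname{gr}\mathcal A_{\mu(\varepsilon)}$. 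By Theorem~\ref{th:quantization}(i), $\operatorname{gr}\mathcal A_{\mu(\varepsilon)}=A_{\mu(\varepsilon)}$ for $\varepsilon\neq0$, and by Shuvalov's theorem recalled above (which identifies $\lim_{\varepsilon\to0}A_{\mu(\varepsilon)}$ with $A_{l+1}$) the right-hand side equals $A_{l+1}$; hence $\operatorname{gr}\bigl(\lim_{\varepsilon\to0}\mathcal A_{\mu(\varepsilon)}\bigr)=A_{l+1}$.

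For $\mathcal A_l$, I would argue by induction along the flag $\mathfrak g=\mathfrak z_{-1}\supset\mathfrak z_0\supset\dots\supset\mathfrak z_l=\mathfrak h$ that each $\mathcal A_k$ is commutative with $\operatorname{gr}\mathcal A_k=A_k$, Shuvalov's inductively defined subalgebra, so that $\operatorname{gr}\mathcal A_l=A_{l+1}$. The key input is the extension of Theorem~\ref{th:quantization} to non-regular parameters: for an arbitrary $\mu$ in a reductive Lie algebra, $ev_\mu(\widehat A)$ is a commutative subalgebra lying in the centralizer of $\mathfrak z(\mu)$, with associated graded $A_\mu$. Applied to $\mathfrak z_{k-1}$ and $\mu_k$, this makes the $k$-th increment $ev_{\mu_k}(\widehat A_{k-1})$ commutative with the expected symbol; since the pieces sit in nested centralizers, checking that it commutes with $\mathcal A_{k-1}$ reduces to commutativity inside $U(\mathfrak z_{k-1})$, again the universal statement, and taking symbols turns $\mathcal A_k=ev_{\mu_k}(\widehat A_{k-1})\cdot\mathcal A_{k-1}$ into Shuvalov's $A_k$. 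It then remains to establish the inclusion $\mathcal A_l\subseteq\lim_{\varepsilon\to0}\mathcal A_{\mu(\varepsilon)}$: each generator of $\mathcal A_l$ is the image of some $\partial^{\,j}_{\mu_k}\Phi$ for a $\mathfrak z_{k-1}$-invariant $\Phi$, or lies in $\mathfrak h$; lifting $\Phi$ to a $\mathfrak g$-invariant $\widetilde\Phi$, expanding $\partial^{\,j}_{\mu(\varepsilon)}\widetilde\Phi\in\mathcal A_{\mu(\varepsilon)}$ in powers of $\varepsilon$ and extracting the coefficient of the appropriate power recovers that generator modulo elements produced at earlier stages, and after rescaling this coefficient survives in the flat limit. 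This is Shuvalov's computation for $S(\mathfrak g)$ transported to $U(\mathfrak g)$; in \cite{KR} it is organized via the De Concini--Procesi wonderful compactification of $\mathbb P(\mathfrak h)^{reg}$, the relevant boundary stratum being the flag $\mathfrak z_0\supset\dots\supset\mathfrak z_l$.

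Combining these, $\mathcal A_l$ is a commutative subalgebra of $\lim_{\varepsilon\to0}\mathcal A_{\mu(\varepsilon)}$ with the same associated graded, hence with the same Poincar\'e series in every filtered degree, which forces $\mathcal A_l=\lim_{\varepsilon\to0}\mathcal A_{\mu(\varepsilon)}$. The main obstacle is making the limit honest: one must know that $\lim_{\varepsilon\to0}\mathcal A_{\mu(\varepsilon)}$ is generated precisely by the $\varepsilon$-leading terms appearing in the inductive recipe and is not strictly larger --- equivalently, that the universal family of quantum shift-of-argument subalgebras extends flatly over a suitable compactification of the parameter space. This flatness, together with the compatible computation of the boundary fiber, is the technical content of \cite{KR}, Section~10; the commutativity of $\mathcal A_l$ and the identifications of symbols are comparatively routine given Theorem~\ref{th:quantization} and Shuvalov's description.
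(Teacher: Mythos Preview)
The paper does not give its own proof of this statement; it simply records it as a citation to \cite{KR}, Theorems~10.7 and~10.8, immediately after describing the inductive construction of $\mathcal{A}_l$. Your proposal correctly identifies this source and gives a faithful outline of the strategy used there: compare associated gradeds via Shuvalov's classical description, establish the inclusion $\mathcal{A}_l\subseteq\lim_{\varepsilon\to0}\mathcal{A}_{\mu(\varepsilon)}$ by extracting leading $\varepsilon$-coefficients, and conclude by a Poincar\'e-series count, with flatness over the De~Concini--Procesi compactification supplying the needed control on the limit. This is the same approach; nothing further is required.
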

%$\mathcal{A}_k\subset U\left(\mathfrak{z}_{k-1}\right)^{\mathfrak{z}_k}$

\section{Proof of the theorem A.}
\subsection{Isotypic components.}

Consider the action of $\mathfrak{g}$-invariants $S(\mathfrak{g)}^{\mathfrak{g}}=\mathbb{C}[\Phi_1,\ldots,\Phi_n]$ on $S(\mathfrak{g})$. We have the following well-known result of Kostant (\cite{K}, Theorem 0.12).

\begin{predl}
[Kostant] $S(\mathfrak{g})$ is a free $S(\mathfrak{g)}^{\mathfrak{g}}$-module, $S(\mathfrak{g})=S(\mathfrak{g})^{\mathfrak{g}}\otimes H$, where the space of generators $H=\bigoplus m_{\lambda}V_{\lambda}$ is the sum of all irreducible finite-dimensional representations $V_{\lambda}$ of Lie algebra $\mathfrak{g}$ with highest weight $\lambda$ taken with the multiplicity $m_{\lambda}=\dim V_{\lambda}(0)$, where $V_{\lambda}(0)$ is the $0$-weight subspace of $V_\lambda$.
\end{predl}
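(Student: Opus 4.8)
The plan is to realize the space of generators $H$ as the coordinate ring of the nilpotent cone $\mathcal{N}\subset\mathfrak{g}$, to deduce freeness from the fact that $\Phi_1,\dots,\Phi_n$ form a regular sequence, and then to compute the $\mathfrak{g}$-module structure of $\mathbb{C}[\mathcal{N}]$ by deforming to a generic fiber of the adjoint quotient. Write $A=S(\mathfrak{g})^{\mathfrak{g}}=\mathbb{C}[\Phi_1,\dots,\Phi_n]$ and $d_i=\deg\Phi_i$.

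First I would check that $\Phi_1,\dots,\Phi_n$ is a regular sequence in $S(\mathfrak{g})$, which reduces to showing that their common zero locus has codimension $n$. In fact this zero locus is set-theoretically the nilpotent cone $\mathcal{N}$: a positive-degree $G$-invariant vanishes on every nilpotent element $e$ because $t\cdot e$ stays in the orbit of $e$ for all $t\in\mathbb{C}^{*}$ (conicity of nilpotent orbits, via an $\mathfrak{sl}_2$-triple through $e$), and conversely, if all $\Phi_i$ vanish at $x$ then $\Phi_i(x)=\Phi_i(x_s)$ for the semisimple part $x_s\in\overline{G\cdot x}$, forcing $x_s=0$ by the Chevalley restriction theorem, so $x$ is nilpotent. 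Since $\mathcal{N}$ is the closure of the regular nilpotent orbit, $\dim\mathcal{N}=\dim\mathfrak{g}-n$ (the centralizer of a regular element has dimension $\operatorname{rk}\mathfrak{g}$); as $n$ equations cut out codimension $n$ in the Cohen--Macaulay ring $S(\mathfrak{g})$, they form a regular sequence. Hence the Koszul complex on the $\Phi_i$ resolves $\mathbb{C}$ over $A$ and remains exact after $\otimes_A S(\mathfrak{g})$, so $\operatorname{Tor}_1^A(S(\mathfrak{g}),\mathbb{C})=0$ and $S(\mathfrak{g})$ is flat, hence graded free, over $A$. Choosing a $\mathfrak{g}$-stable graded complement $H$ lifting $S(\mathfrak{g})/(\Phi_1,\dots,\Phi_n)S(\mathfrak{g})$ (graded Nakayama) gives $S(\mathfrak{g})=A\otimes H$ with $H\cong S(\mathfrak{g})/(\Phi_i)S(\mathfrak{g})$ as graded $\mathfrak{g}$-modules; and since this quotient is a complete intersection that is generically reduced (it is smooth along the dense regular orbit), the ideal $(\Phi_i)$ is radical and $H\cong\mathbb{C}[\mathcal{N}]$.

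To identify $H$ as a $\mathfrak{g}$-module, note that $S(\mathfrak{g})=A\otimes H$ forces every fiber $S(\mathfrak{g})\otimes_A\mathbb{C}_c=\mathbb{C}[\pi^{-1}(c)]$ of the adjoint quotient $\pi=(\Phi_1,\dots,\Phi_n)\colon\mathfrak{g}\to\mathbb{C}^n$ to be isomorphic to $H$ as a $\mathfrak{g}$-module. For generic $c$ the fiber $\pi^{-1}(c)$ is a single closed regular semisimple orbit, reduced (by the same Cohen--Macaulay-plus-generically-reduced argument), and isomorphic to $G/T$. Hence $H\cong\mathbb{C}[G/T]$, and the algebraic Peter--Weyl theorem gives $\mathbb{C}[G/T]=\mathbb{C}[G]^T=\bigl(\bigoplus_\lambda V_\lambda\otimes V_\lambda^{*}\bigr)^T=\bigoplus_\lambda V_\lambda\otimes(V_\lambda^{*})^T$. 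Since $(V_\lambda^{*})^T$ is the zero weight space of $V_\lambda^{*}$, whose dimension equals $\dim V_\lambda(0)$, we obtain $m_\lambda=\dim V_\lambda(0)$, as claimed.

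The main obstacle is the single piece of genuine geometry: that the common zero set of the $\Phi_i$ is precisely $\mathcal{N}$ and has dimension $\dim\mathfrak{g}-\operatorname{rk}\mathfrak{g}$ (equivalently, that the $\Phi_i$ form a regular sequence) — everything afterwards is commutative-algebra bookkeeping together with the standard Peter--Weyl decomposition. One could instead avoid the deformation step by computing the graded character $\operatorname{ch}H=\operatorname{ch}S(\mathfrak{g})\cdot\prod_{i=1}^n(1-t^{d_i})$ directly and matching it termwise against $\bigoplus_\lambda V_\lambda^{\oplus\dim V_\lambda(0)}$ via Weyl's character formula, but passing to a generic regular semisimple fiber is the cleaner and more conceptual route.
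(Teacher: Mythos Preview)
The paper does not actually prove this statement: it is quoted as a classical result of Kostant, with a bare reference to \cite{K}, Theorem~0.12, and no argument is given. Your sketch is a correct outline of (one standard presentation of) Kostant's proof. The logic --- the $\Phi_i$ cut out $\mathcal{N}$ set-theoretically, $\dim\mathcal{N}=\dim\mathfrak{g}-\operatorname{rk}\mathfrak{g}$, hence they form a regular sequence and $S(\mathfrak{g})$ is graded free over $A$, with $H\cong\mathbb{C}[\mathcal{N}]$ by the $R_0+S_1$ criterion, and then the multiplicity is read off from any fiber of the adjoint quotient, in particular a regular semisimple one where it becomes Peter--Weyl for $\mathbb{C}[G/T]$ --- is sound. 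The only point worth flagging is that Kostant's original 1963 paper obtains the multiplicity formula $m_\lambda=\dim V_\lambda(0)$ by a somewhat different route, via harmonic polynomials and an explicit graded character computation rather than by deforming to a generic fiber; the $\mathbb{C}[G/T]$ argument you give is a later, cleaner repackaging. Either way, there is nothing for you to compare against in the present paper.
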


In particular, the isotypic component of $V_{\lambda}$ in $S(\mathfrak{g})$ is a direct sum of $\dim V_\lambda(0)$ copies of $S(\mathfrak{g)}^{\mathfrak{g}}\otimes V_\lambda$. The space of generators of each copy is homogeneous of some degree. We will be interested in such isotypic component for $V_\lambda$ being the adjoint representation. The multiplicity $\dim V_\lambda(0)$ is in this case just the rank of $\mathfrak{g}$, and the degrees are the exponents of $\mathfrak{g}$, i.e. $m_r:=\deg\Phi_r-1$ for $r=1,\ldots,n$. The space of degree $m_r$ generators of this isotypic component can be given in the following way. Consider the following homomorphism of $\mathfrak{g}$-modules:
\begin{equation}
\tau: \mathfrak{g}\otimes S(\mathfrak{g})\longrightarrow S(\mathfrak{g}),
\end{equation}
\begin{center}
$\mu\otimes P\mapsto \partial_{\mu}P.$
\end{center}

Under this homomorphism $\mu\otimes\Phi_r\mapsto\partial_{\mu}\Phi_r$ ($r=1,\ldots,\textrm{rk}\ \mathfrak{g}$) and since $\Phi_r$ is a central element we have $[g,\mu\otimes\Phi_r]=[g,\mu]\otimes\Phi_r$. From this it follows that $\partial_{\mu}\Phi_r$ lies in isotypic component corresponding to the adjoint representation of $\mathfrak{g}$ with $\mathfrak{g}$-action defined on $\partial_{\mu}\Phi_r$ as
$$g\cdot\partial_{\mu}\Phi_r=\partial_{[g,\mu]}\Phi_r.$$
Moreover, the following fact is true.

\begin{predl}\label{th:SgAdjoint} \cite{K}
(i) $Hom_{\mathfrak{g}}\left(\mathfrak{g},S\left(\mathfrak{g}\right)\right)$ is a free $S\left(\mathfrak{g}\right)^{\mathfrak{g}}$-module.\\
(ii) This module is generated by $\tau_r\in Hom_{\mathfrak{g}}\left(\mathfrak{g},S\left(\mathfrak{g}\right)\right)$:
\begin{equation}
\tau_r:\mathfrak{g}\simeq\mathfrak{g}\otimes\Phi_r\xrightarrow{\tau}S\left(\mathfrak{g}\right)
\end{equation}
for $r=1,\ldots,n=\text{rk}\ \mathfrak{g}$.
\end{predl}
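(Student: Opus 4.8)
The plan is to derive both statements from Kostant's structure theorem for $S(\mathfrak{g})$ recorded above, together with his criterion describing regular elements by the differentials of the basic invariants. For part (i): by that theorem $S(\mathfrak{g})=S(\mathfrak{g})^{\mathfrak{g}}\otimes H$ both as a graded $S(\mathfrak{g})^{\mathfrak{g}}$-module and as a $\mathfrak{g}$-module, the $\mathfrak{g}$-action being concentrated in the factor $H$. Applying $\operatorname{Hom}_{\mathfrak{g}}(\mathfrak{g},-)$ and pulling the flat, locally finite algebra $S(\mathfrak{g})^{\mathfrak{g}}$ out of the Hom, one obtains $\operatorname{Hom}_{\mathfrak{g}}(\mathfrak{g},S(\mathfrak{g}))\cong S(\mathfrak{g})^{\mathfrak{g}}\otimes_{\mathbb{C}}\operatorname{Hom}_{\mathfrak{g}}(\mathfrak{g},H)$, which is a free $S(\mathfrak{g})^{\mathfrak{g}}$-module whose rank is the multiplicity of the adjoint module $\mathfrak{g}$ in $H$. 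By Kostant's theorem that multiplicity equals $\dim\mathfrak{g}(0)=\dim\mathfrak{h}=\text{rk}\ \mathfrak{g}=n$, which proves (i).

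For part (ii) I would identify $\operatorname{Hom}_{\mathfrak{g}}(\mathfrak{g},S(\mathfrak{g}))$ with $(\mathfrak{g}\otimes\mathbb{C}[\mathfrak{g}])^{\mathfrak{g}}$, i.e. with $\mathfrak{g}$-equivariant polynomial maps $\psi\colon\mathfrak{g}\to\mathfrak{g}$ (using the invariant form), under which $\tau_r$ becomes the gradient map $x\mapsto\operatorname{grad}\Phi_r(x)$. Two facts of Kostant are invoked: first, $x\in\mathfrak{g}$ is regular if and only if $\operatorname{grad}\Phi_1(x),\dots,\operatorname{grad}\Phi_n(x)$ are linearly independent, in which case they form a basis of the self-centralizing $n$-dimensional subalgebra $\mathfrak{z}_{\mathfrak{g}}(x)$; second, the set of non-regular elements has codimension $\geq 2$ in $\mathfrak{g}$. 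Linear independence of $\tau_1,\dots,\tau_n$ over $S(\mathfrak{g})^{\mathfrak{g}}$ is then immediate: a relation $\sum_r P_r\tau_r=0$ evaluated at a regular $x$ gives $\sum_r P_r(x)\operatorname{grad}\Phi_r(x)=0$, hence $P_r(x)=0$ on the dense regular locus and so $P_r=0$.

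For the generation part I take an arbitrary equivariant $\psi$ as above. Since the connected centralizer $Z_G(x)^{\circ}$ of a regular $x$ fixes $x$, equivariance forces $\psi(x)$ into the fixed space $\mathfrak{g}^{Z_G(x)^{\circ}}=\mathfrak{z}_{\mathfrak{g}}(\mathfrak{z}_{\mathfrak{g}}(x))$, which by self-centrality is $\mathfrak{z}_{\mathfrak{g}}(x)$; thus for regular $x$ we may write $\psi(x)=\sum_r c_r(x)\operatorname{grad}\Phi_r(x)$ with the $c_r(x)$ uniquely determined as coordinates in the basis $\{\operatorname{grad}\Phi_r(x)\}$. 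Solving locally on the regular locus by Cramer's rule, applied to an invertible $n\times n$ submatrix of the matrix of gradients $\bigl[\operatorname{grad}\Phi_1(x)\ \cdots\ \operatorname{grad}\Phi_n(x)\bigr]$, shows each $c_r$ is a rational function regular on the regular locus, and uniqueness of the coefficients together with $G$-equivariance of $\psi$ and of the gradient maps makes each $c_r$ invariant. Because $\mathbb{C}[\mathfrak{g}]$ is a polynomial ring, hence normal, and the complement of the regular locus has codimension $\geq 2$, each $c_r$ extends to an element of $S(\mathfrak{g})^{\mathfrak{g}}$, whence $\psi=\sum_r c_r\tau_r$. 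Combined with the linear independence just shown, this yields that $\tau_1,\dots,\tau_n$ form a free $S(\mathfrak{g})^{\mathfrak{g}}$-basis.

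The only substantial ingredients are the two quoted theorems of Kostant — the differential criterion for regularity (with self-centrality of $\mathfrak{z}_{\mathfrak{g}}(x)$) and the codimension bound on the non-regular locus powering the Hartogs-type extension of the $c_r$; granting these, the argument is formal. The main point to be careful about is that one should \emph{not} phrase the solvability for the $c_r$ via a Gram determinant $\det(\langle\operatorname{grad}\Phi_i,\operatorname{grad}\Phi_j\rangle)$, since the invariant form degenerates on $\mathfrak{z}_{\mathfrak{g}}(x)$ at regular nilpotent $x$; it must instead be phrased via full rank of the matrix of gradients on the whole regular locus.
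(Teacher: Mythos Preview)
The paper does not give its own proof of this statement: it is quoted as a result of Kostant with the citation \cite{K} and used as a black box. So there is no paper proof to compare against.

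Your argument is correct and is essentially the standard route to this fact. Part~(i) is immediate from the Kostant decomposition $S(\mathfrak{g})=S(\mathfrak{g})^{\mathfrak{g}}\otimes H$ as you say. For part~(ii), the identification of $\operatorname{Hom}_{\mathfrak{g}}(\mathfrak{g},S(\mathfrak{g}))$ with $G$-equivariant polynomial maps $\mathfrak{g}\to\mathfrak{g}$, sending $\tau_r$ to $\operatorname{grad}\Phi_r$, is the right framework; the three Kostant inputs you invoke (the differential criterion for regularity, abelianness/self-centrality of $\mathfrak{z}_{\mathfrak{g}}(x)$ for regular $x$, and codimension $\geq 2$ of the non-regular locus) are exactly what is needed, and the Cramer-plus-Hartogs extension of the coefficients $c_r$ is clean. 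Your caveat about avoiding the Gram determinant $\det(\langle\operatorname{grad}\Phi_i,\operatorname{grad}\Phi_j\rangle)$ is well taken: that determinant vanishes on the nilpotent cone, so one must work with the rank of the $(\dim\mathfrak{g})\times n$ matrix of gradients instead.

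One small expository point: for the step $\psi(x)\in\mathfrak{z}_{\mathfrak{g}}(x)$ you do not actually need to pass through the group centralizer $Z_G(x)^\circ$. Since $\psi$ corresponds to an element of $(\mathfrak{g}\otimes S(\mathfrak{g}))^{\mathfrak{g}}$, infinitesimal invariance already gives $[x,\psi(x)]=0$ directly (differentiate the $G$-equivariance $\psi(\mathrm{Ad}(g)x)=\mathrm{Ad}(g)\psi(x)$ at $g=e$ along the stabilizer direction, or equivalently note that $\sum_i [y,\psi_i]\otimes p_i + \psi_i\otimes y\cdot p_i=0$ evaluated at $x$ with $y\in\mathfrak{z}_{\mathfrak{g}}(x)$ kills the second term). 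This avoids having to discuss connectedness of $Z_G(x)$.
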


Similar results hold for $U\left(\mathfrak{g}\right)$. Note that $U\left(\mathfrak{g}\right)$ is isomorphic to $S\left(\mathfrak{g}\right)$ as a filtered $\mathfrak{g}$-module via the symmetrization (PBW) map $S(\mathfrak{g})\to U(\mathfrak{g})$. For any $x\in S\left(\mathfrak{g}\right)$ we denote by $\hat{x}$ its image in $U\left(\mathfrak{g}\right)$ under the symmetrization map. Let $ZU(\mathfrak{g})=U(\mathfrak{g})^{\mathfrak{g}}$ be the center of the universal enveloping algebra $U(\mathfrak{g})$, then $U(\mathfrak{g})$ is a free $ZU(\mathfrak{g})$-module described as follows.

\begin{predl}\label{th:UgAdjoint}
\cite{K} (i) $Hom_{\mathfrak{g}}\left(\mathfrak{g},U\left(\mathfrak{g}\right)\right)$ is a free $ZU\left(\mathfrak{g}\right)$-module.\\
(ii) This module is generated by the symmetrizations of all $\tau_r$: $\widehat{\tau_r}\in Hom_{\mathfrak{g}}\left(\mathfrak{g},U\left(\mathfrak{g}\right)\right)$ for $r=1,\ldots,n=\textrm{rk}\ \mathfrak{g}$.
\end{predl}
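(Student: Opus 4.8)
The plan is to transfer the commutative statement of Theorem~\ref{th:SgAdjoint} to $U(\mathfrak{g})$ by the standard device of lifting freeness through an associated graded. Recall that the symmetrization map $x\mapsto\widehat{x}$ is an isomorphism of filtered $\mathfrak{g}$-modules $S(\mathfrak{g})\xrightarrow{\sim}U(\mathfrak{g})$ inducing the identity on $\mathrm{gr}\,U(\mathfrak{g})=S(\mathfrak{g})$, and that, being $\mathfrak{g}$-equivariant, it carries $S(\mathfrak{g})^{\mathfrak{g}}$ into $ZU(\mathfrak{g})$ and each $\tau_r\in(\mathfrak{g}^*\otimes S(\mathfrak{g}))^{\mathfrak{g}}=\mathrm{Hom}_{\mathfrak{g}}(\mathfrak{g},S(\mathfrak{g}))$ to $\widehat{\tau_r}\in\mathrm{Hom}_{\mathfrak{g}}(\mathfrak{g},U(\mathfrak{g}))$. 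I would put on $\mathrm{Hom}_{\mathfrak{g}}(\mathfrak{g},U(\mathfrak{g}))$ the filtration for which $\phi$ has degree $\le d$ exactly when $\phi(\mathfrak{g})\subseteq U_d(\mathfrak{g})$, and on $ZU(\mathfrak{g})$ the filtration induced from $U(\mathfrak{g})$; then $ZU(\mathfrak{g})$ acts on $\mathrm{Hom}_{\mathfrak{g}}(\mathfrak{g},U(\mathfrak{g}))$ by $(z\cdot\phi)(x)=z\,\phi(x)$, making it a filtered $ZU(\mathfrak{g})$-module. The abstract input is the elementary lemma that if $M$ is a filtered module over a filtered algebra $A$, both filtrations exhaustive and bounded below, and $\mathrm{gr}\,M$ is free over $\mathrm{gr}\,A$ on homogeneous generators $\bar e_1,\dots,\bar e_k$ of degrees $d_i$, then arbitrary lifts $e_i\in M_{d_i}$ of the $\bar e_i$ form a free $A$-basis of $M$; this is proved by the usual pair of inductions on filtration degree (generation by repeatedly subtracting off leading terms, freeness by looking at the top-degree part of a hypothetical relation and invoking freeness of $\mathrm{gr}\,M$).

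The remaining work is to compute the relevant associated graded objects, and this is where reductivity of $\mathfrak{g}$ enters. For any locally finite $\mathfrak{g}$-module with a $\mathfrak{g}$-stable exhaustive filtration, the Reynolds projector onto the trivial isotypic component preserves the filtration (each filtration piece is a $\mathfrak{g}$-submodule, hence decomposes into isotypic components), so taking $\mathfrak{g}$-invariants commutes with passing to $\mathrm{gr}$. Applied to $U(\mathfrak{g})$ with its PBW filtration this gives $\mathrm{gr}\,ZU(\mathfrak{g})=(\mathrm{gr}\,U(\mathfrak{g}))^{\mathfrak{g}}=S(\mathfrak{g})^{\mathfrak{g}}$ as a graded algebra; applied to $\mathfrak{g}^*\otimes U(\mathfrak{g})$ it gives $\mathrm{gr}\,\mathrm{Hom}_{\mathfrak{g}}(\mathfrak{g},U(\mathfrak{g}))=(\mathfrak{g}^*\otimes S(\mathfrak{g}))^{\mathfrak{g}}=\mathrm{Hom}_{\mathfrak{g}}(\mathfrak{g},S(\mathfrak{g}))$, and one checks that under these two identifications the $ZU(\mathfrak{g})$-action becomes the $S(\mathfrak{g})^{\mathfrak{g}}$-action, i.e. $\mathrm{gr}(z\cdot\phi)=(\mathrm{gr}\,z)(\mathrm{gr}\,\phi)$. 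Moreover, since $x\mapsto\widehat{x}$ induces the identity on $\mathrm{gr}$ and each $\tau_r$ is homogeneous of degree $m_r=\deg\Phi_r-1$, the image of $\widehat{\tau_r}$ in $\mathrm{gr}\,\mathrm{Hom}_{\mathfrak{g}}(\mathfrak{g},U(\mathfrak{g}))=\mathrm{Hom}_{\mathfrak{g}}(\mathfrak{g},S(\mathfrak{g}))$ is precisely $\tau_r$.

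Combining these, Theorem~\ref{th:SgAdjoint} says that $\mathrm{gr}\,\mathrm{Hom}_{\mathfrak{g}}(\mathfrak{g},U(\mathfrak{g}))=\mathrm{Hom}_{\mathfrak{g}}(\mathfrak{g},S(\mathfrak{g}))$ is free over $\mathrm{gr}\,ZU(\mathfrak{g})=S(\mathfrak{g})^{\mathfrak{g}}$ on the homogeneous generators $\tau_1,\dots,\tau_n$, which lift to $\widehat{\tau_1},\dots,\widehat{\tau_n}$; the lifting lemma then shows that $\widehat{\tau_1},\dots,\widehat{\tau_n}$ form a free $ZU(\mathfrak{g})$-basis of $\mathrm{Hom}_{\mathfrak{g}}(\mathfrak{g},U(\mathfrak{g}))$, proving both (i) and (ii) at once. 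I expect the only delicate point to be the bookkeeping around the associated-graded identifications — specifically verifying that the Reynolds projector is filtered and that $\mathrm{gr}$ intertwines postcomposition by $z$ with postcomposition by $\mathrm{gr}\,z$ — since the remainder is the routine filtered-to-graded bootstrap together with an appeal to the already-established commutative case.
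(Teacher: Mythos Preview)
Your argument is correct and is exactly the natural deduction from Theorem~\ref{th:SgAdjoint} via the PBW filtration. The paper itself does not give a proof of this statement: it simply cites Kostant and remarks, in the sentence preceding the theorem, that $U(\mathfrak{g})\simeq S(\mathfrak{g})$ as filtered $\mathfrak{g}$-modules via the symmetrization map --- your proposal fills in precisely the details that this hint leaves implicit.
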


%We have $U(\mathfrak{g})=ZU(\mathfrak{g})\otimes\tilde{H}$ as $ZU(\mathfrak{g})$-module, where $\tilde{H}=\bigoplus m_{\lambda}V_{\lambda}$ is the sum of all irreducible finite-dimensional representations $V_{\lambda}$ of Lie algebra $\mathfrak{g}$ with highest weight $\lambda$ taken with the multiplicity $m_{\lambda}=\dim V_{\lambda}(0)$.

We use the above general theorems to describe explicitly some natural subspace in the (quantum) shift of argument subalgebra.

\begin{lemma}\label{le:AmuGenerators}
The quantum shift of argument subalgebra $\mathcal{A}_\mu$ contains the center $ZU(\mathfrak{g})$ and the elements $\widehat{\partial_{\mu}\Phi_r}$ for $r=1,\ldots,n$.
\end{lemma}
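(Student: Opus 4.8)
The plan is to show that both $ZU(\mathfrak{g})$ and the elements $\widehat{\partial_\mu\Phi_r}$ lie in $\mathcal{A}_\mu$ by working at the level of associated graded algebras and using the uniqueness/rigidity of the quantization. Recall that by Theorem~\ref{th:quantization} we have $\mathrm{gr}\,\mathcal{A}_\mu = A_\mu$, and by definition $A_\mu$ is generated by all the derivatives $\partial_\mu^k\Phi_r$; in particular $A_\mu\supset S(\mathfrak{g})^{\mathfrak{g}}=\mathbb{C}[\Phi_1,\dots,\Phi_n]$ (taking $k=0$) and $A_\mu\ni\partial_\mu\Phi_r$ for each $r$. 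So the statement is really that these particular classical generators admit the ``obvious'' symmetrized lifts inside $\mathcal{A}_\mu$.

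For the center: since $ZU(\mathfrak{g})$ is a subalgebra of $U(\mathfrak{g})$ with $\mathrm{gr}\,ZU(\mathfrak{g})=S(\mathfrak{g})^{\mathfrak{g}}\subset A_\mu=\mathrm{gr}\,\mathcal{A}_\mu$, one expects $ZU(\mathfrak{g})\subset\mathcal{A}_\mu$ directly. The cleanest route is to invoke Theorem~\ref{th:quantization}(ii): $\mathcal{A}_\mu=ev_\mu(\widehat A)$ for the universal subalgebra $\widehat A\subset (U(\mathfrak{g})\otimes S(\mathfrak{g}))^{\mathfrak{g}}$, and $\widehat A$ certainly contains $ZU(\mathfrak{g})\otimes 1$ (central elements of the first factor are $\mathfrak{g}$-invariant and commute with everything in the construction of $\widehat A$), whose image under $ev_\mu$ is exactly $ZU(\mathfrak{g})$. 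Alternatively one can argue that $\mathcal{A}_\mu$ is a maximal commutative subalgebra and that $ZU(\mathfrak{g})$ commutes with it, hence is contained in it.

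For the elements $\widehat{\partial_\mu\Phi_r}$: I would use Theorems~\ref{th:SgAdjoint} and \ref{th:UgAdjoint}. The element $\partial_\mu\Phi_r$ lies in the image of the generator $\tau_r\in\mathrm{Hom}_{\mathfrak{g}}(\mathfrak{g},S(\mathfrak{g}))$ evaluated at $\mu$, and its symmetrization $\widehat{\partial_\mu\Phi_r}$ is the evaluation at $\mu$ of the generator $\widehat{\tau_r}\in\mathrm{Hom}_{\mathfrak{g}}(\mathfrak{g},U(\mathfrak{g}))$. Since $\mathrm{Hom}_{\mathfrak{g}}(\mathfrak{g},U(\mathfrak{g}))$ is a free $ZU(\mathfrak{g})$-module on the $\widehat{\tau_r}$, every $\mathfrak{g}$-equivariant lift of the classical generator $\partial_\mu\Phi_r$ of $A_\mu$ differs from $\widehat{\partial_\mu\Phi_r}$ by a lower-order term which is a $ZU(\mathfrak{g})$-combination of the $\widehat{\tau_s}(\mu)$'s together with something of strictly smaller degree; one then shows inductively on the PBW degree that the symmetrization $\widehat{\partial_\mu\Phi_r}$ itself (not merely some lift) is forced to lie in $\mathcal{A}_\mu$. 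Concretely: pick the element of $\mathcal{A}_\mu$ whose leading term is $\partial_\mu\Phi_r$, subtract $\widehat{\partial_\mu\Phi_r}$, and observe the difference has lower degree and is still $\mathfrak{g}$-equivariant of adjoint type and lies in (the $ZU(\mathfrak{g})$-span inside) $\mathcal{A}_\mu$; iterate.

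The main obstacle I expect is making the last induction precise: one needs that the correction terms at each step again belong to $\mathcal{A}_\mu$ and are again symmetrizations of adjoint-type elements, so that Theorem~\ref{th:UgAdjoint}(ii) keeps applying. This is where the freeness of $\mathrm{Hom}_{\mathfrak{g}}(\mathfrak{g},U(\mathfrak{g}))$ over $ZU(\mathfrak{g})$ and the fact that $ZU(\mathfrak{g})\subset\mathcal{A}_\mu$ (established first) do the work: any adjoint-isotypic element of $\mathcal{A}_\mu$ of degree $\le m_r$ is a $ZU(\mathfrak{g})$-combination of the $\widehat{\tau_s}(\mu)$ of degree $\le m_r$, and matching leading terms with the known classical description of the adjoint-isotypic part of $A_\mu$ pins down the combination. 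Everything else is the routine bookkeeping of filtered/graded arguments, which I will not spell out here.
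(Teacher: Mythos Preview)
Your argument for $ZU(\mathfrak{g})\subset\mathcal{A}_\mu$ via $\widehat{A}$ and $ev_\mu$ is fine and is exactly what the paper does.

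The inductive argument for $\widehat{\partial_\mu\Phi_r}$, however, has a genuine gap. You pick an element of $\mathcal{A}_\mu$ with leading term $\partial_\mu\Phi_r$, subtract $\widehat{\partial_\mu\Phi_r}$, and claim the difference ``is still $\mathfrak{g}$-equivariant of adjoint type and lies in $\mathcal{A}_\mu$''. Neither assertion is justified. The subalgebra $\mathcal{A}_\mu\subset U(\mathfrak{g})$ is \emph{not} a $\mathfrak{g}$-submodule (it depends on the choice of $\mu$), so an arbitrary lift in $\mathcal{A}_\mu$ of $\partial_\mu\Phi_r$ has no reason to sit in the adjoint isotypic component of $U(\mathfrak{g})$; its lower-order terms can be anything. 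And the difference cannot be asserted to lie in $\mathcal{A}_\mu$ without already knowing $\widehat{\partial_\mu\Phi_r}\in\mathcal{A}_\mu$, which is what you are proving. Your attempted resolution (``any adjoint-isotypic element of $\mathcal{A}_\mu$ \ldots'') presupposes the same non-existent $\mathfrak{g}$-module structure on $\mathcal{A}_\mu$, so the induction never gets started.

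The paper avoids this by running the whole argument at the level of the universal subalgebra $\widehat{A}\subset(U(\mathfrak{g})\otimes S(\mathfrak{g}))^{\mathfrak{g}}$, which \emph{is} $\mathfrak{g}$-invariant. One shows that $\mathrm{gr}\,\widehat{A}$ contains $(S(\mathfrak{g})\otimes 1)^{\mathfrak{g}}$ and $(S(\mathfrak{g})\otimes\mathfrak{g})^{\mathfrak{g}}$ (this is where Theorem~\ref{th:SgAdjoint} enters), and from $\mathfrak{g}$-invariance deduces that $\widehat{A}$ itself contains $(U(\mathfrak{g})\otimes 1)^{\mathfrak{g}}=ZU(\mathfrak{g})\otimes 1$ and $(U(\mathfrak{g})\otimes\mathfrak{g})^{\mathfrak{g}}=\mathrm{Hom}_{\mathfrak{g}}(\mathfrak{g},U(\mathfrak{g}))$. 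Only then does one apply $ev_\mu$; by Theorem~\ref{th:UgAdjoint} the image of $(U(\mathfrak{g})\otimes\mathfrak{g})^{\mathfrak{g}}$ under $ev_\mu$ is exactly $\bigoplus_r ZU(\mathfrak{g})\cdot\widehat{\partial_\mu\Phi_r}$. The moral: the filtered/isotypic bookkeeping you want to do works upstairs in $\widehat{A}$, where $\mathfrak{g}$ acts, not downstairs in $\mathcal{A}_\mu$, where it does not.
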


\begin{proof} The associated graded of $\widehat{A}$ in $S(\mathfrak{g}\oplus\mathfrak{g})$ can be regarded as a subalgebra in the algebra of polynomial functions in $x,y\in\mathfrak{g}$. According to \cite{R} it is generated by the coefficients at the powers of $t$ in $\Phi_r(x+ty)$ for all generators $\Phi_r\in S(\mathfrak{g})^{\mathfrak{g}}$. So by Theorem~\ref{th:SgAdjoint} $gr \widehat{A}$ contains $(S\left(\mathfrak{g}\right)\otimes1)^{\mathfrak{g}}$ and $(S\left(\mathfrak{g}\right)\otimes\mathfrak{g})^{\mathfrak{g}}$. Hence both $(U\left(\mathfrak{g}\right)\otimes1)^{\mathfrak{g}}$ and $(U\left(\mathfrak{g}\right)\otimes\mathfrak{g})^{\mathfrak{g}}$ belong to the universal shift of argument subalgebra $\widehat{A}\subset U(\mathfrak{g})\otimes S(\mathfrak{g})$. On the other hand by Theorem~\ref{th:UgAdjoint} we have surjective maps $$ev_{\mu}:[U(\mathfrak{g})\otimes1]^{\mathfrak{g}}\rightarrow ZU(\mathfrak{g}),$$ 
\begin{center}
and
\end{center}
$$ev_{\mu}:[U(\mathfrak{g})\otimes\mathfrak{g}]^{\mathfrak{g}}\rightarrow \bigoplus\limits_{r=1}^n ZU(\mathfrak{g})\widehat{\partial_{\mu}\Phi_r},$$ 
hence the assertion. \end{proof}

\begin{prop}\label{pr:Isotypic} (i) For any $\mathfrak{g}$-module homomorphism $\tau:\mathfrak{g}\to S\left(\mathfrak{g}\right)$ we have $\tau(\mu)\in A_\mu$. 

(ii) For any $\mathfrak{g}$-module homomorphism $\widehat{\tau}:\mathfrak{g}\to U\left(\mathfrak{g}\right)$ we have $\widehat{\tau}(\mu)\in \mathcal{A}_\mu$.
\end{prop}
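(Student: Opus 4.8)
The plan is to deduce Proposition~\ref{pr:Isotypic} from Lemma~\ref{le:AmuGenerators} together with the freeness statements in Theorems~\ref{th:SgAdjoint} and \ref{th:UgAdjoint}. The point is that Lemma~\ref{le:AmuGenerators} already places the specific generators $\partial_\mu\Phi_r$ (respectively $\widehat{\partial_\mu\Phi_r}$) inside $A_\mu$ (respectively $\mathcal{A}_\mu$), and these generators correspond, under the evaluation-at-$\mu$ map, to a $ZU(\mathfrak{g})$-module (respectively $S(\mathfrak{g})^{\mathfrak{g}}$-module) basis of $\operatorname{Hom}_{\mathfrak{g}}(\mathfrak{g},U(\mathfrak{g}))$ (respectively $\operatorname{Hom}_{\mathfrak{g}}(\mathfrak{g},S(\mathfrak{g}))$). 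So any $\mathfrak{g}$-module homomorphism $\tau\colon\mathfrak{g}\to S(\mathfrak{g})$ is an $S(\mathfrak{g})^{\mathfrak{g}}$-linear combination of the $\tau_r$, and evaluating at $\mu$ expresses $\tau(\mu)$ as a combination of the $\partial_\mu\Phi_r=\tau_r(\mu)$ with coefficients in $S(\mathfrak{g})^{\mathfrak{g}}$, all of which lie in $A_\mu$.

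In more detail, for part~(i): by Theorem~\ref{th:SgAdjoint}(i)--(ii) the module $\operatorname{Hom}_{\mathfrak{g}}(\mathfrak{g},S(\mathfrak{g}))$ is free over $S(\mathfrak{g})^{\mathfrak{g}}$ on the generators $\tau_1,\dots,\tau_n$, so we may write $\tau=\sum_{r=1}^n p_r\,\tau_r$ for some $p_r\in S(\mathfrak{g})^{\mathfrak{g}}$. Here $p_r\tau_r$ means the homomorphism $x\mapsto p_r\cdot\tau_r(x)$. Evaluating the identity of homomorphisms at the element $\mu\in\mathfrak{g}$ gives $\tau(\mu)=\sum_{r=1}^n p_r\cdot\tau_r(\mu)=\sum_{r=1}^n p_r\cdot\partial_\mu\Phi_r$. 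Since $A_\mu$ is a subalgebra containing all $\partial_\mu^k\Phi_r$ (in particular all $\partial_\mu\Phi_r$) and, by definition or by Lemma~\ref{le:AmuGenerators}, it contains $S(\mathfrak{g})^{\mathfrak{g}}$, each summand $p_r\cdot\partial_\mu\Phi_r$ lies in $A_\mu$, and hence so does $\tau(\mu)$.

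For part~(ii) the argument is the same with $S(\mathfrak{g})$ replaced by $U(\mathfrak{g})$ and $S(\mathfrak{g})^{\mathfrak{g}}$ by $ZU(\mathfrak{g})$: by Theorem~\ref{th:UgAdjoint} we write $\widehat\tau=\sum_{r=1}^n z_r\,\widehat{\tau_r}$ with $z_r\in ZU(\mathfrak{g})$, evaluate at $\mu$ to get $\widehat\tau(\mu)=\sum_{r=1}^n z_r\,\widehat{\partial_\mu\Phi_r}$, and invoke Lemma~\ref{le:AmuGenerators}, which gives both $\widehat{\partial_\mu\Phi_r}\in\mathcal{A}_\mu$ and $ZU(\mathfrak{g})\subset\mathcal{A}_\mu$; since $\mathcal{A}_\mu$ is a subalgebra, $\widehat\tau(\mu)\in\mathcal{A}_\mu$. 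The one point that needs a small remark is that the symmetrization map is only a map of filtered $\mathfrak{g}$-modules, not of algebras, so one must be careful that $\widehat{\tau_r}$ really is a $\mathfrak{g}$-module homomorphism $\mathfrak{g}\to U(\mathfrak{g})$ and that $\widehat{\tau_r}(\mu)=\widehat{\partial_\mu\Phi_r}$; both are immediate from the definitions since $\mu$ is a fixed element of $\mathfrak{g}$ and symmetrization is $\mathfrak{g}$-equivariant. The only genuine input beyond bookkeeping is Kostant's freeness theorems, which are quoted, so there is no real obstacle; the mild subtlety is simply making sure that "evaluation at $\mu$" of an $S(\mathfrak{g})^{\mathfrak{g}}$-linear (resp. $ZU(\mathfrak{g})$-linear) combination of homomorphisms is compatible with multiplication inside $A_\mu$ (resp. $\mathcal{A}_\mu$), which it is because $A_\mu\supseteq S(\mathfrak{g})^{\mathfrak{g}}$ and $\mathcal{A}_\mu\supseteq ZU(\mathfrak{g})$ are subalgebras.
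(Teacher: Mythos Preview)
Your proof is correct and follows essentially the same approach as the paper: use the freeness of $\operatorname{Hom}_{\mathfrak{g}}(\mathfrak{g},S(\mathfrak{g}))$ (resp.\ $\operatorname{Hom}_{\mathfrak{g}}(\mathfrak{g},U(\mathfrak{g}))$) over the invariants to write $\tau$ as a combination $\sum_r p_r\tau_r$, then evaluate at $\mu$ and use that $A_\mu$ (resp.\ $\mathcal{A}_\mu$) contains both the invariants and the $\tau_r(\mu)=\partial_\mu\Phi_r$ (resp.\ $\widehat{\tau_r}(\mu)$) by Lemma~\ref{le:AmuGenerators}. The paper's proof is just a terser version of exactly this argument.
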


\begin{proof} Clearly, for any $\mu\in\mathfrak{g}$ we have $\tau_r(\mu)\in A_\mu$ and by Lemma~\ref{le:AmuGenerators} we have $\widehat{\tau_r}(\mu)\in \mathcal{A}_\mu$. Since $S\left(\mathfrak{g}\right)^{\mathfrak{g}}$ is contained in $A_\mu$ and $ZU\left(\mathfrak{g}\right)$ is contained in $\mathcal{A}_\mu$, by Theorems~\ref{th:SgAdjoint}~and~\ref{th:UgAdjoint} we get the assertion.
\end{proof}

\subsection{Image of $\mathcal{B}^\mp$ in the centralizer algebra $U(\mathfrak{g}_{n})^{\mathfrak{g}_{n-1}}$.}

Recall the free generators $\sigma_2^{(2m)}$ and $s_{11}^{(2m+1)}$ of the Bethe subalgebra $\mathcal{B}^\mp\subset \textrm{Y}^{\mp}(2)$.

\begin{lemma}\label{le:ImageOfB}
(i) The images of the generators $\sigma_2^{(2m)}$ under $\varphi_n$ lie in the subalgebra generated by the centers of $U(\mathfrak{g}_n)$ and $U(\mathfrak{g}_{n-1})$.

(ii) The images of the generators $s_{11}^{(2m+1)}$ under $\varphi_n$ lie in isotypic component of $U(\mathfrak{g}_n)$ corresponding to the adjoint representation of $\mathfrak{g}_n$. More precisely, we have $\varphi_n(s_{11}^{(2m+1)})=\hat{\tau}(\mathcal{F}_{nn})$ for some $\mathfrak{g}_n$-homomorphism $\hat{\tau}:\mathfrak{g}_n\to U(\mathfrak{g}_n)$.
\end{lemma}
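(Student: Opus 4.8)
The plan is to analyze the image $\varphi_n(\sigma_k(u, F_{11}))$ for $\textrm{Y}^{\mp}(2)$, i.e. for $k=1,2$, by feeding the explicit formula for $\varphi_n$ from Theorem~\ref{th:centralizer}(i) into the expressions for $\sigma_1$ and $\sigma_2$ obtained in Subsection~1.5. Since $\varphi_n(S(u)) = \chi_n(u)\bigl(1-\tfrac{\mathcal F}{u+\frac{N\pm1}{2}}\bigr)^{-1}$, each matrix entry $\varphi_n(s_{ij}(u))$ is $\chi_n(u)$ times the $(i,j)$-entry of the resolvent $\bigl(1-\tfrac{\mathcal F}{u+\frac{N\pm1}{2}}\bigr)^{-1} = \sum_{p\ge 0}\mathcal F^p (u+\frac{N\pm1}{2})^{-p}$. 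For part~(i), $\sigma_2(u,F_{11})$ is (up to the scalar prefactors that appeared in Subsection~1.5) built from $s_{11}(u)s_{-1,-1}(v)$-type products evaluated along a shifted spectral line; after applying $\varphi_n$ the only ingredients are $\chi_n(u)$ — whose coefficients are central in $U(\mathfrak g_n)$ by construction — and the matrix entries of the resolvent of $\mathcal F$ restricted to the two-dimensional block indexed by $\pm n$. But $\mathcal F$ restricted to indices $|i|=n$ is precisely the image of the generators of $\mathfrak g_1 \cong \mathfrak{sl}_2$ (or its orthogonal analog) sitting inside $U(\mathfrak g_n)$ via the relevant ``last block''; more to the point, the coefficients of $\sigma_2$ are central in $\textrm{Y}^{\mp}(2)$ by Theorem~\ref{th:BetheSubalgebra}(i), and under $\varphi_n$ the center of $\textrm{Y}^{\mp}(2)=\textrm{Y}^{\mp}(M)$ for $M=2$ lands in the center of the centralizer algebra $U(\mathfrak g_n)^{\mathfrak g_{n-1}}$, which by Molev's description (the centralizer is a quotient of $\textrm{Y}^{\mp}(2)$ and its center is generated by $ZU(\mathfrak g_n)$ together with $ZU(\mathfrak g_{n-1})$) is exactly the subalgebra generated by the two centers. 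So (i) reduces to identifying $\varphi_n(ZY^{\mp}(2))$ with $ZU(\mathfrak g_n)\cdot ZU(\mathfrak g_{n-1})$, which is the content of the cited results in \cite{MO}, \cite{M}.

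For part~(ii), I would argue as follows. By Proposition~\ref{pr:EvaluationMap}(ii) we have the embedding $\nu: F_{ij}\mapsto s^{(1)}_{ij}$ of $U(\mathfrak g_1)$ into $\textrm{Y}^{\mp}(2)$, and $\varphi_n\circ\nu$ realizes the ``last-block'' copy of $\mathfrak g_1$ inside $U(\mathfrak g_n)$, namely $F_{nn}\mapsto \mathcal F_{nn}$. The key structural point is that $\varphi_n$ is a homomorphism of $\mathfrak g_1$-modules, where $\mathfrak g_1$ acts on $\textrm{Y}^{\mp}(2)$ by the adjoint action coming from $\nu$ and on $U(\mathfrak g_n)$ by the adjoint action via the last block. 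Now $s_{11}^{(2m+1)}$, together with the appropriate multiple of $s_{-1,-1}^{(2m+1)}$ (which by the symmetry relation~(\ref{scruch4}) equals $-s_{11}^{(2m+1)}$), is — as computed in Subsection~1.5 — proportional to the odd coefficient of $\tfrac12(s_{-1,-1}(u)-s_{11}(u)) = \sigma_1(u,F_{11})$; one checks that the three-dimensional span of $s_{11}^{(2m+1)}$, $s_{-1,1}^{(2m+1)}$, $s_{1,-1}^{(2m+1)}$ (or the analogous odd combinations) inside $\textrm{Y}^{\mp}(2)$, modulo the center, is an adjoint copy of $\mathfrak g_1\cong\mathfrak{sl}_2$: indeed the defining relations~(\ref{scruch3}) show that the $s^{(1)}_{ij}$ act on the higher $s^{(M)}_{kl}$ exactly as $\mathfrak g_1$ on its adjoint. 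Hence $s_{11}^{(2m+1)}$ is, modulo $ZY^{\mp}(2)$, of the form $\tau(F_{11})$ for some $\mathfrak g_1$-homomorphism $\tau:\mathfrak g_1\to \textrm{Y}^{\mp}(2)$ — more precisely the component of the weight-zero vector in an adjoint submodule. Applying $\varphi_n$ and using that it intertwines the $\mathfrak g_1$-actions, and that $\varphi_n$ maps $ZY^{\mp}(2)$ into the center of $U(\mathfrak g_n)^{\mathfrak g_{n-1}}$, we obtain $\varphi_n(s_{11}^{(2m+1)}) = \hat\tau(\mathcal F_{nn})$ for some homomorphism $\hat\tau$ whose target lies in the span of $ZU(\mathfrak g_n)\cdot ZU(\mathfrak g_{n-1})\cdot$(adjoint isotypic component); one then checks the central part can be absorbed, so in fact $\hat\tau$ maps into the adjoint isotypic component of $U(\mathfrak g_n)$, viewed as a $\mathfrak g_n$-homomorphism $\mathfrak g_n\to U(\mathfrak g_n)$ (using that the centralizer-algebra adjoint-$\mathfrak g_1$-submodule extends to the full adjoint $\mathfrak g_n$-action by Theorem~\ref{th:UgAdjoint}).

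The main obstacle I anticipate is the last sentence of part~(ii): upgrading a $\mathfrak g_1$-homomorphism (which is all one gets directly from the twisted-Yangian side, since $\nu$ only sees the last block) to a genuine $\mathfrak g_n$-homomorphism $\hat\tau:\mathfrak g_n\to U(\mathfrak g_n)$ landing in the adjoint isotypic component. This requires knowing that the image $\varphi_n(s_{11}^{(2m+1)})$ — which a priori lives only in $U(\mathfrak g_n)^{\mathfrak g_{n-1}}$ and is weight-zero for the last block — actually sits inside the adjoint-type isotypic component of $U(\mathfrak g_n)$ for the full $\mathfrak g_n$-action, with the right homogeneity degree $m=2m+1$. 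I would handle this by a PBW-degree count: the generator $s_{11}^{(2m+1)}$ has a well-defined leading term under the filtration, its image in $S(\mathfrak g_n)$ under $\mathrm{gr}\,\varphi_n$ can be computed from the classical limit of the resolvent formula, and one matches it against $\tau_r(\mathcal F_{nn})$ for the appropriate invariant $\Phi_r$; Theorem~\ref{th:SgAdjoint} then pins down which adjoint-isotypic generator it is, and the quantum statement follows by the freeness in Theorem~\ref{th:UgAdjoint} together with an induction on the filtration degree. A secondary subtlety is bookkeeping the scalar prefactors $\tfrac{2u-1}{6-4u}$ and the shifts $u\mapsto u+\tfrac{N\pm1}{2}$ so that the odd coefficients really do survive and are not killed or mixed with even ones; this is routine but must be done carefully to make the ``$2m+1$'' match.
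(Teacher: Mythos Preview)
Your argument for part~(i) is correct and is essentially the paper's: since $\sigma_2^{(2m)}\in ZY^{\mp}(2)$, its image under $\varphi_n$ lies in the center of $U(\mathfrak g_n)^{\mathfrak g_{n-1}}$, and the latter is generated by $ZU(\mathfrak g_n)$ and $ZU(\mathfrak g_{n-1})$.

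For part~(ii) you are overcomplicating matters, and the ``main obstacle'' you identify --- upgrading a $\mathfrak g_1$-equivariance statement to a $\mathfrak g_n$-equivariance statement --- is an artificial difficulty created by restricting attention to $\textrm{Y}^{\mp}(2)$ alone. The paper avoids this entirely by working inside the ambient $\textrm{Y}^{\mp}(N)$. Recall that $\textrm{Y}^{\mp}(2)$ sits in $\textrm{Y}^{\mp}(N)$ as the subalgebra generated by $s_{kl}^{(M)}$ with $|k|,|l|=n$, and that $\varphi_n:\textrm{Y}^{\mp}(N)\to U(\mathfrak g_n)$ sends $s_{ij}^{(1)}\mapsto F_{ij}$ for \emph{all} $|i|,|j|\le n$ (not just for the last block). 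Now expand the defining relation~(\ref{scruch3}) to first order in $u^{-1}$: for every fixed $M$,
\[
[s_{ij}^{(1)},s_{kl}^{(M)}]=\delta_{il}s_{kj}^{(M)}-\delta_{kj}s_{il}^{(M)}-\theta_{i,-l}\delta_{k,-i}s_{-l,j}^{(M)}+\theta_{k,-j}\delta_{-j,l}s_{i,-k}^{(M)},
\]
which says precisely that the span of $\{s_{kl}^{(M)}:|k|,|l|\le n\}$ is a copy of the adjoint representation of $\mathfrak g_n$ under the action of the $s_{ij}^{(1)}$. Since $\varphi_n$ is an algebra homomorphism taking $s_{ij}^{(1)}$ to $F_{ij}$, it intertwines this action with the adjoint action of $\mathfrak g_n$ on $U(\mathfrak g_n)$. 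Hence $\varphi_n(s_{kl}^{(M)})$ lies in the adjoint isotypic component for every $k,l,M$, and in particular $\varphi_n(s_{11}^{(2m+1)})=\hat\tau(\mathcal F_{nn})$ for the resulting $\mathfrak g_n$-map $\hat\tau$. No PBW induction, no classical-limit matching, and no delicate absorption of central terms is needed.

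Your proposed workaround via associated graded and Theorem~\ref{th:SgAdjoint} could probably be made to work, but the step ``the centralizer-algebra adjoint-$\mathfrak g_1$-submodule extends to the full adjoint $\mathfrak g_n$-action by Theorem~\ref{th:UgAdjoint}'' is not justified as stated: many $\mathfrak g_n$-irreducibles other than the adjoint contain adjoint $\mathfrak g_1$-copies upon restriction, so being a $\mathfrak g_1$-adjoint vector in $U(\mathfrak g_n)^{\mathfrak g_{n-1}}$ does not by itself pin down the $\mathfrak g_n$-isotypic type.
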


\begin{proof} (i) According to Molev and Olshanski \cite{MO} the centralizer subalgebra $U(\mathfrak{g}_n)^{\mathfrak{g}_{n-1}}$ is generated by $\varphi_n(\textrm{Y}^{\mp}(2))$ and the center of $U(\mathfrak{g}_n)$. Hence the elements $\varphi_n(\sigma_2^{(2m)})$ belong to the center of $U(\mathfrak{g}_n)^{\mathfrak{g}_{n-1}}$. On the other hand, the center of $U(\mathfrak{g}_n)^{\mathfrak{g}_{n-1}}$ is generated by the centers of $U(\mathfrak{g}_n)$ and $U(\mathfrak{g}_{n-1})$ (see e.g. \cite{Kn}).

(ii) From the commutation relations ($\ref{scruch3}$) of $\textrm{Y}^{\mp}(N)$ we obtain that
\begin{equation}\label{m1}
[s_{ij}^{(1)}, s_{kl}(u)]=\delta_{il}s_{kj}(u)-\delta_{kj}s_{il}(u)-\theta_{i,-l}\delta_{k,-i}s_{-l,j}(u)+\theta_{k,-j}\delta_{-j,l}s_{i,-k}(u)
\end{equation}
\begin{center}
and
\end{center}
\begin{equation}\label{m2}
[s_{ij}^{(1)}, s_{kl}^{(M)}]=\delta_{il}s_{kj}^{(M)}-\delta_{kj}s_{il}^{(M)}-\theta_{i,-l}\delta_{k,-i}s_{-l,j}^{(M)}+\theta_{k,-j}\delta_{-j,l}s_{i,-k}^{(M)}
\end{equation}
for fixed $M\in\mathbb{Z}_{\geq0}$.

%Recall that $\mathcal{B}^{\mp}$ is generated by the center of $\textrm{Y}^{\mp}(2)$ and $s_{11}^{(2m+1)}$ for $m\in\mathbb{Z}_{\geq0}$. 

Note that $\varphi_n(s_{ij}^{(1)})=F_{ij}$ hence by (\ref{m2}), the image of $s_{11}^{(2m+1)}$ lies in isotypic component of the adjoint representation of $\mathfrak{g}$.
\end{proof}

We consider the limit shift of argument subalgebra $\lim\limits_{\varepsilon\to0}\mathcal{A}_{\mu(\varepsilon)}$ where $\mu(\varepsilon)=F_{nn}+F_{n-1,n-1}\varepsilon+\ldots+F_{11}\varepsilon^{n-1}$. 

\begin{prop}\label{pr:BinA} The image of $\mathcal{B}^{\mp}$ in $U(\mathfrak{g}_k)^{\mathfrak{g}_{k-1}}$ lies in the subalgebra $\lim\limits_{\varepsilon\to0}\mathcal{A}_{\mu(\varepsilon)}\subset U(\mathfrak{g}_n)$.
\end{prop}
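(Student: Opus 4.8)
The plan is to reduce the statement, via Theorem~\ref{th:quantShuvalov} (Shuvalov-type inductive description of the quantum limit subalgebra) and Proposition~\ref{pr:Isotypic}, to showing that the images $\varphi_k(\sigma_2^{(2m)})$ and $\varphi_k(s_{11}^{(2m+1)})$ are each of the ``right shape'' to be captured at the $k$-th step of the inductive construction. Concretely, with $\mu(\varepsilon)=F_{nn}+F_{n-1,n-1}\varepsilon+\ldots+F_{11}\varepsilon^{n-1}$ one has $\mu_j=F_{n-j,n-j}$, so $\mathfrak{z}_{j}=\mathfrak{z}_{\mathfrak{g}_n}(F_{nn})\cap\cdots\cap\mathfrak{z}_{\mathfrak{g}_n}(F_{n-j,n-j})$; one checks directly that $\mathfrak{z}_{j}=\mathfrak{g}_{n-j-1}\oplus\mathfrak{h}'$ where $\mathfrak{h}'$ is the span of the $F_{ii}$ with $i>n-j-1$ (i.e. the commuting $\mathfrak{sl}_2$-triples have been ``peeled off'' down to level $n-j-1$). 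In particular $\mathfrak{z}_{n-k}$ has $\mathfrak{g}_{k-1}$ as a direct summand, and the step of the induction producing $\mathcal{A}_{n-k+1}$ from $\mathcal{A}_{n-k}$ adjoins the $\mu_{n-k}=F_{kk}$-derivatives of the invariants of $\mathfrak{z}_{n-k-1}=\mathfrak{g}_{k}\oplus(\text{torus})$. So it suffices to see that $\varphi_k(\mathcal{B}^\mp)$ lies in the subalgebra of $U(\mathfrak{z}_{n-k-1})$ generated by $Z U(\mathfrak{g}_{k-1})$, $Z U(\mathfrak{g}_{k})$, the torus, and the $F_{kk}$-derivatives of $ZU(\mathfrak{g}_k)$-generators.

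The two parts of Lemma~\ref{le:ImageOfB} do exactly this at level $k$ (they are stated for $n$ but apply verbatim to any $k$, with $\mathfrak{g}_k\supset\mathfrak{g}_{k-1}$ in place of $\mathfrak{g}_n\supset\mathfrak{g}_{n-1}$). Part (i) gives $\varphi_k(\sigma_2^{(2m)})\in ZU(\mathfrak{g}_k)\cdot ZU(\mathfrak{g}_{k-1})$; the central elements of $U(\mathfrak{g}_k)$ and $U(\mathfrak{g}_{k-1})$ both lie in $\lim_{\varepsilon\to0}\mathcal{A}_{\mu(\varepsilon)}$ — the former because $ZU(\mathfrak{g}_n)\supset\varphi_n(ZY^\mp(N))$ is in $\mathcal{A}_0$ and is carried along by the induction, the latter by applying the same observation one level down inside $\mathfrak{z}_{n-k-1}$ (whose invariants enter at step $n-k$). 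Part (ii) writes $\varphi_k(s_{11}^{(2m+1)})=\widehat{\tau}(\mathcal{F}_{kk})$ for a $\mathfrak{g}_k$-module homomorphism $\widehat\tau:\mathfrak{g}_k\to U(\mathfrak{g}_k)$; since $\mathcal{F}_{kk}=F_{kk}=\mu_{n-k}$ is (up to the chosen normalization) the regular element along which one differentiates at step $n-k$, Proposition~\ref{pr:Isotypic}(ii) — applied to $\mathfrak{g}_k$ sitting inside $\mathfrak{z}_{n-k-1}$, for the shift-of-argument subalgebra $\mathcal{A}_{F_{kk}}\subset U(\mathfrak{g}_k)$ which is a building block of the inductive construction — gives $\widehat\tau(F_{kk})\in\mathcal{A}_{F_{kk}}\subset\mathcal{A}_{n-k+1}\subset\lim_{\varepsilon\to0}\mathcal{A}_{\mu(\varepsilon)}$.

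Since $\mathcal{B}^\mp$ is generated by the $\sigma_2^{(2m)}$ and the $s_{11}^{(2m+1)}$ (the Proposition in Subsection~1.5), combining the two cases shows $\varphi_k(\mathcal{B}^\mp)\subset\lim_{\varepsilon\to0}\mathcal{A}_{\mu(\varepsilon)}$, which is the claim. The main obstacle I anticipate is the bookkeeping in the first paragraph: one must verify carefully that the successive centralizers $\mathfrak{z}_j$ for this particular $\mu(\varepsilon)$ are exactly $\mathfrak{g}_{n-j-1}\oplus(\text{diagonal torus in the complementary }\mathfrak{sl}_2\text{'s})$, so that the image of $\varphi_k$ really does land in a factor $U(\mathfrak{g}_k)$ appearing in the Shuvalov tower, and that the normalization shift ``$u\mp\tfrac12$'' in $\xi$ and the ``$u+\tfrac{N\pm1}{2}$'' in $\varphi_n$ do not disturb the identification of $F_{kk}$ with the relevant $\mu_j$ up to scalar (which is harmless, as $A_\mu$ is dilation-invariant). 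Everything else is a matter of invoking Lemma~\ref{le:ImageOfB} and Proposition~\ref{pr:Isotypic} level by level.
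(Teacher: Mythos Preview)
Your proposal is correct and follows essentially the same approach as the paper: invoke Theorem~\ref{th:quantShuvalov} to see that the limit subalgebra contains each $\mathcal{A}_{F_{kk}}\subset U(\mathfrak{g}_k)$, then use Lemma~\ref{le:ImageOfB} together with Proposition~\ref{pr:Isotypic}(ii) to place the generators $\varphi_k(\sigma_2^{(2m)})$ and $\varphi_k(s_{11}^{(2m+1)})$ inside $\mathcal{A}_{F_{kk}}$. The paper's version is more terse and does not spell out the structure of the centralizers $\mathfrak{z}_j$; your extra bookkeeping is correct but unnecessary, since all you need is that $\mathcal{A}_{F_{kk}}$ (which already contains $ZU(\mathfrak{g}_k)$ by Lemma~\ref{le:AmuGenerators}) appears as a building block of the Shuvalov tower, and your worry about the normalization shifts in $\varphi_k$ is indeed harmless for the reason you give.
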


\begin{proof} By Theorem~\ref{th:quantShuvalov} the subalgebra $\lim\limits_{\varepsilon\to0}\mathcal{A}_{\mu(\varepsilon)}\subset U(\mathfrak{g}_n)$ is generated by the subalgebras $\mathcal{A}_{F_{kk}}$ in $U(\mathfrak{g}_k)$. By Lemma~\ref{le:ImageOfB} and Proposition~\ref{pr:Isotypic} the images of the generators of $\mathcal{B}^{\mp}$ in $U(\mathfrak{g}_k)^{\mathfrak{g}_{k-1}}$ belong to $\mathcal{A}_{F_{kk}}$ in $U(\mathfrak{g}_k)$.
\end{proof}

So for proving Theorem~A it remains to show that the subalgebra $\mathcal{A}^\mp$ in $U(\mathfrak{g}_n)$ generated by the centers of all $U(\mathfrak{g}_k)$ and by the images of $\mathcal{B}^{\mp}$ in all $U(\mathfrak{g}_k)^{\mathfrak{g}_{k-1}}$ has the same size as $\lim\limits_{\varepsilon\to0}\mathcal{A}_{\mu(\varepsilon)}$, i.e. has algebraically independent generators of the same degrees as $\mathcal{A}_\mu$ with generic $\mu$ has.

\begin{rem} \emph{In \cite{MY}, Example~5.6, Molev and Yakimova describe the subalgebra $\lim\limits_{\varepsilon\to0}\mathcal{A}_{\mu(\varepsilon)}$ for $\mathfrak{g}_n=\mathfrak{sp}_{2n}$ as the subalgebra generated by the centers of $U(\mathfrak{g}_k)$ and by $\hat{\tau}(F_{kk})$ for all the homomorphisms $\hat{\tau}:\mathfrak{g}_k\to U(\mathfrak{g}_k)$ as above. In the next subsection we show that $\mathcal{A}^-$ is generated by the same elements. Moreover, we give a similar description for the case $\mathfrak{g}_n=\mathfrak{o}_{2n+1}$.} 
\end{rem}

\subsection{Generators of the limit subalgebra}

Let $F^{(k)}$ ($k=1,...,n$) be the submatrix of $F$ (see (\ref{F-matrix})) lying in the intersection of rows and columns with $|i|,|j|\leq k$. Alternatively, in symplectic case one can treat $F^{(k)}$ as $F$-matrix for $\mathfrak{sp}_{2k}$ contained in $\mathfrak{sp}_{2n}$ as the span of $F_{ij}$ with $i,j=-k,...,-1,1,...,k$, and in odd orthogonal case as $F$-matrix for $\mathfrak{o}_{2k+1}$ contained in $\mathfrak{o}_{2n+1}$ as the span of $F_{ij}$ with $i,j=-k,...,-1,0,1,...,k$. the Poisson center of $S(\mathfrak{g}_k)$ is generated by the traces of even powers of $F^{(k)}$, i.e. by the elements $\Phi_m^{(k)}:=\textrm{Tr}(F^{(k)})^{2m}$ with $m=1,\ldots,k$. Similarly, we define $\mathcal{F}^{(k)}$ ($k=1,...,n$) be the submatrix of $\mathcal{F}$ lying in the intersection of rows and columns with $|i|,|j|\leq k$. The generators of the center of $U(\mathfrak{g}_k)\subset U(\mathfrak{g}_n)$ is generated by the elements $S_m^{(k)}:=\textrm{Tr}\ (\mathcal{F}^{(k)})^{2m}$ with $m=1,\ldots,k$. Clearly we have $\textrm{gr}\ S_m^{(k)}=\Phi_m^{(k)}$.

\begin{prop}\label{alg_ind_lemma}
The elements $\varphi_k(s_{11}^{(2m-1)}),S_m^{(k)}$ in $U(\mathfrak{g}_{k})^{\mathfrak{g}_{k-1}}$ for $k=1,\ldots,n$ and $m=1,\ldots,k$, are algebraically independent elements of $U(\mathfrak{g}_{n})$. 

%(ii) $ZU(\mathfrak{sp}_{2n})$-submodule ($ZU(\mathfrak{o}_{2n+1})$-submodule) spanned by the generators of commutative subalgebra $\mathcal{A}_{n}$ coincides with the direct sum of isotypic components corresponding to the trivial and the adjoint representations in $U(\mathfrak{sp}_{2n})$ ($U(\mathfrak{o}_{2n+1})$) regarded as $ZU(\mathfrak{sp}_{2n})$-module ($ZU(\mathfrak{o}_{2n+1})$-module).
\end{prop}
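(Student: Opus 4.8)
The plan is to pass to associated graded algebras, identify the symbols explicitly, and then reduce algebraic independence to the non‑degeneracy of an explicit Jacobian at a generic point of $\mathfrak g_n^*$.

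Since algebraic independence of elements of $U(\mathfrak g_n)$ follows from algebraic independence of their principal symbols in $S(\mathfrak g_n)=\mathrm{gr}\,U(\mathfrak g_n)$ (take the top filtered component of a hypothetical relation), it suffices to treat the symbols. One has $\mathrm{gr}\,S_m^{(k)}=\Phi_m^{(k)}=\mathrm{Tr}\,(F^{(k)})^{2m}$. For $\varphi_k(s_{11}^{(2m-1)})$ I would use Theorem~\ref{th:centralizer}(i): under the relevant embedding $\textrm{Y}^{\mp}(2)\hookrightarrow\textrm{Y}^{\mp}(N)$ the generator $s_{11}$ becomes $s_{kk}$, so $\varphi_k(s_{11}(u))=\chi_k(u)\,\bigl[(1-\mathcal F^{(k)}/(u+c))^{-1}\bigr]_{kk}$ for a constant $c$. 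Using (\ref{def:chi}) together with $\chi_k(u)|_{\lambda=0}=1$ (hence every $\chi_{k,r}$ lies in the augmentation ideal of $ZU(\mathfrak g_k)$, so its symbol lies in the ideal generated by $\Phi_1^{(k)},\dots,\Phi_k^{(k)}$), one reads off that $\varphi_k(s_{11}^{(2m-1)})$ has filtration degree $2m-1$ with
\[
\mathrm{gr}\,\varphi_k(s_{11}^{(2m-1)})=g_{k,m}+\sum_{i=1}^{m-1}\bar\chi_{k,2i}\,g_{k,m-i},\qquad g_{k,m}:=\bigl((F^{(k)})^{2m-1}\bigr)_{kk},
\]
where the $\bar\chi_{k,2i}\in S(\mathfrak g_k)^{\mathfrak g_k}=\mathbb C[\Phi_1^{(k)},\dots,\Phi_k^{(k)}]$ are of positive degree. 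Solving triangularly in $m$, the subalgebra of $S(\mathfrak g_n)$ generated by all $\Phi_m^{(k)}$ and all $\mathrm{gr}\,\varphi_k(s_{11}^{(2m-1)})$ equals the one generated by all $\Phi_m^{(k)}$ and all $g_{k,m}$, and $g_{k,m}$ is a nonzero multiple of $\partial_{F_{kk}}\Phi_m^{(k)}$. So it remains to prove that the $n(n+1)$ elements $\{\Phi_m^{(k)}\}\cup\{g_{k,m}\}$, $1\le m\le k\le n$, are algebraically independent in $S(\mathfrak g_n)$.

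For this I would show their differentials are linearly independent at a generic $X\in\mathfrak g_n^*\cong\mathfrak g_n$. Choose $X$ so that every truncation $Y_k:=X^{(k)}\in\mathfrak g_k$ is regular semisimple and $\mathfrak t_k:=\mathfrak z_{\mathfrak g_k}(Y_k)$ satisfies $\mathfrak t_k\cap\mathfrak g_{k-1}=0$; each of these is a nonempty open condition (the first because $X\mapsto X^{(k)}$ is onto, the second because the Cartan subalgebras of $\mathfrak g_k$ meeting $\mathfrak g_{k-1}$ nontrivially form a proper subvariety for dimension reasons). A function depending only on $X^{(k)}$ has gradient in $\mathfrak g_k\hookrightarrow\mathfrak g_n$; explicitly $\nabla\Phi_m^{(k)}$ is proportional to $Y_k^{2m-1}\in\mathfrak t_k$ and $\nabla g_{k,m}$ is proportional to $\sum_{a+b=2m-2}Y_k^{b}F_{kk}Y_k^{a}\in\mathfrak g_k$. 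In an eigenbasis of $Y_k$ (eigenvalues nonzero and pairwise distinct up to sign) the powers $Y_k,Y_k^3,\dots,Y_k^{2k-1}$ are linearly independent by a Vandermonde determinant, so $\{\nabla\Phi_m^{(k)}:m\le k\}$ is a basis of $\mathfrak t_k$; with $\mathfrak t_k\cap\mathfrak g_{k-1}=0$ this gives $\sum_k\mathfrak t_k=\bigoplus_k\mathfrak t_k$ of dimension $\binom{n+1}{2}$. It then remains to prove that the classes of the vectors $\nabla g_{k,m}$ modulo $\bigoplus_j\mathfrak t_j$ are linearly independent. Projecting a hypothetical relation successively onto $\mathfrak g_n/\mathfrak g_{n-1},\ \mathfrak g_{n-1}/\mathfrak g_{n-2},\dots$ and using $\mathfrak t_k\cap\mathfrak g_{k-1}=0$ at each step, this reduces, level by level, to the assertion that for generic $Y\in\mathfrak g_k$ the $k$ vectors $\sum_{a+b=2m-2}Y^{b}F_{kk}Y^{a}$ $(m=1,\dots,k)$ are linearly independent modulo $\mathfrak z_{\mathfrak g_k}(Y)+\mathfrak g_{k-1}$.

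This last statement is the heart of the argument and the main obstacle — the only place where genericity really enters. It is an explicit finite‑dimensional linear‑algebra fact: in an eigenbasis of $Y$ the $E_{pq}$‑coefficient of $\sum_{a+b=2m-2}Y^{b}F_{kk}Y^{a}$ equals $(F_{kk})_{pq}\,h_{2m-2}(\lambda_p,\lambda_q)$, where $h_d$ is the complete homogeneous symmetric polynomial, $\lambda_\bullet$ are the eigenvalues of $Y$, and the matrix $F_{kk}=E_{kk}-E_{-k,-k}$ has rank two; the required independence modulo $\mathfrak z_{\mathfrak g_k}(Y)+\mathfrak g_{k-1}$ then comes down to the non‑vanishing of a product of Vandermonde‑type minors in the $\lambda_p$ and the entries of the diagonalizing matrix. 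For $\mathfrak g_n=\mathfrak{sp}_{2n}$ one can shortcut this, since by Molev–Yakimova (\cite{MY}, Example~5.6) the algebra $\lim_{\varepsilon\to0}\mathcal A_{\mu(\varepsilon)}$ is generated by exactly these elements and is freely polynomial of transcendence degree $n(n+1)$ (Theorems~\ref{th:quantShuvalov} and its graded counterpart), whence they are algebraically independent; the orthogonal case $\mathfrak g_n=\mathfrak o_{2n+1}$ is settled by the same minor computation, the extra zero eigenvalue of $Y$ not affecting these minors.
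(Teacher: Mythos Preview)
Your reduction to the associated graded and to the elements $\Phi_m^{(k)}$ and $g_{k,m}=\bigl((F^{(k)})^{2m-1}\bigr)_{kk}$ is exactly the paper's. The divergence is in the choice of point at which to test linear independence of the differentials. The paper evaluates at the \emph{principal nilpotent} $\tilde e\in\mathfrak g_n$, whose truncations $\tilde e^{(k)}$ are again principal nilpotent in $\mathfrak g_k$. At such a point one obtains an explicit upper-triangular structure: after introducing a total order on $\{dF_{ij}\}$, a direct monomial count shows that $d\Phi_m^{(k)}|_{\tilde e}$ and $dg_{k,m}|_{\tilde e}$ have pairwise distinct leading terms $dF_{k,\,k-2m+1}$ and $dF_{k,\,k-2m+2}$. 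The nilpotent choice is what makes this tractable: since $\tilde e^{(k)}$ is essentially a single Jordan block, the only monomials in $\mathrm{Tr}(F^{(k)})^{2m}$ or $((F^{(k)})^{2m-1})_{kk}$ contributing to a given $dF_{k,j}$ at $\tilde e$ are cyclic products along the subdiagonal, and the leading $dF_{k,j}$ is read off immediately. The argument is then uniform in types B and C.

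Your route through a generic regular semisimple $X$ with $\mathfrak t_k\cap\mathfrak g_{k-1}=0$ is a genuinely different idea, but you leave the decisive step open. You correctly isolate the obstacle---independence of the $\nabla g_{k,m}$ modulo $\mathfrak z_{\mathfrak g_k}(Y)+\mathfrak g_{k-1}$---but then only assert that it ``comes down to'' the nonvanishing of certain Vandermonde-type minors in the eigenvalues of $Y$ and the entries of the diagonalizing matrix, without writing down those minors or checking they are nonzero. (The earlier claim that $\mathfrak t_k\cap\mathfrak g_{k-1}=0$ is a generic condition ``for dimension reasons'' is also not justified.) Your fallback to \cite{MY} for $\mathfrak{sp}_{2n}$ is logically acceptable---MY together with the freeness of the limit subalgebra from Shuvalov/\cite{KR} does force algebraic independence---but for $\mathfrak o_{2n+1}$ there is no such external result, and your final sentence is an assertion, not a proof. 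The paper's nilpotent approach sidesteps all of this: at $\tilde e$ the invariants $\bar\chi_{k,r}$ vanish, the combinatorics is one-dimensional, and no genericity argument is needed.
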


\begin{proof} The filtration on $\mathrm{Y}(2)$ given by $\deg(t_{ij}^{(m)})=m$ induces filtrations on both $\mathrm{Y}^-(2)$ and $\mathrm{Y}^+(2)$. The associated graded of $\mathrm{Y}^\mp(2)$ with respect to this filtration is a commutative algebra. The homomorphisms $\varphi_k:\mathrm{Y}^\mp(2)\to U(\mathfrak{g}_k)$ are compatible with this filtration on $\mathrm{Y}^\mp(2)$ and the PBW filtration on $U(\mathfrak{g}_k)$. Therefore to prove the Lemma it is sufficient to prove that $\textrm{gr}\ \varphi_k(s_{11}^{(2m-1)})$ and $\textrm{gr}\ \varphi_k(\sigma_2^{(2m)})$ under $\textrm{gr}\ \varphi_k$ are algebraically independent elements of $S(\mathfrak{g}_{n})=\textrm{gr}\ U(\mathfrak{g}_n)$. Throughout the proof of this Proposition we omit ``$\textrm{gr}$'' in the formulas and suppose that everything is in the associated graded algebra.

Consider the symplectic case first (the orthogonal case will be similar). 
%Recall that coefficients $c_2$, $c_4$, ... of Sklyanin determinant (\ref{sklyanin_determinant_sp}) $\textrm{sdet}\;S(u):=1+c_1u^{-1}+c_2u^{-2}+...$ generate the center of twisted Yangian $\mathrm{Y}^-(2)$. This series can be written in the following form:
%\begin{equation}\label{sklyanin_determinant_sp_2}
%\mathrm{sdet}\;S(u)=\frac{2u+1}{2u-1}\left(s_{11}(-u)s_{11}(u-1)+s_{1,-1}(-u)s_{-1,1}(u-1)\right).
%\end{equation}

%From this formula we can see that the highest degree component of $\sigma_{2}^{(2m)}$ is equal to
%\begin{equation}\label{center_Y-2_high_components}
%-\sum\limits_{i=0}^{2m}\left(-1\right)^i\left(s_{11}^{(i)}s_{11}^{(2m-i)}+s_{1,-1}^{(i)}s_{-1,1}^{(2m-i)}\right).
%\end{equation}

From the definition of the homomorphism $\varphi_k$ in Theorem~\ref{th:centralizer} (i) we conclude that $\varphi_k(s_{ij}^{(m)})$ is \begin{equation}\label{images-of sij}\varphi_k(s_{ij}^{(m)})=\left[(\mathcal{F}^{(k)})^m\right]_{ij} + \sum\limits_{r=1}^{m}\chi_{k,r}\left[(\mathcal{F}^{(k)})^{m-r}\right]_{ij}
\end{equation}
where $\left[(\mathcal{F}^{(k)})^m\right]_{ij}$ stands for the $(i,j)$-th entry of the matrix $(\mathcal{F}^{(k)})^m$ and $\chi_{k,r}$ are some central elements of degree $r$. 

So we just need to prove that the following elements of $S(\mathfrak{sp}_{2n})$ are algebraically independent:
\begin{equation}
x_{km}:=\Phi_m^{(k)},\;y_{km}=\left[(F^{(k)})^m\right]_{kk},
\end{equation}
where $k=1,\ldots,n$ and $m=1,\ldots,k$.

%\begin{lemma}
%The elements $x_{km}$ and $y_{km}$ with $m=1,\ldots,k$ and $k=1,\ldots,n$ are algebraically independent. 
%\end{lemma}

Elements of $S(\mathfrak{g})$ are algebraically independent if their differentials at some point of $\xi\in\mathfrak{g}^*$ are linearly independent. We take the following principal nilpotent element of $\mathfrak{sp}_{2n}$ as such $\xi$. 

\begin{equation}
\tilde{e}:=\sum\limits_{i=1}^{n-1}1\otimes E_{i,i+1}+\sum\limits_{i=-n}^{-2}\left(-1\right)\otimes E_{i,i+1}+E_{-1,1}=\\\left(\begin{array}{cccccccccccc} 0 & 0 & 0 & ... & 0 & 0 & 0 & & ... & & & 0\\ 1 & 0 & 0 & ... & 0 & 0 & 0 & & ... & & & 0\\
0 & 1 & 0 & ... & 0 & 0 & 0 & & ... & & & 0\\ 0 & 0 & 1 & ... & 0 & 0 & 0 & & ... & & & 0\\ 
\vdots & \vdots & \vdots & \ddots & \vdots & \vdots & & & \ddots & & & \vdots\\ 0 & & ... & & 1 & 0 & 0 & & ... & & & 0\\
0 & & ... & & 0 & 1 & 0 & & 
... & & & 0\\ 0 & & ... & & 0 & 0 & -1 & 0 & & ... & & 0\\
0 & & ... & & 0 & 0 & 0 & -1 & 0 & ... & & 0\\ 0 & & ... & & 0 & 0 & 0 & 0 & -1 & 0 & ... & 0\\
\vdots & & & & \vdots & \vdots & \vdots & \vdots & \vdots & \ddots & & \vdots\\ 
0 & & & & & ... & & & & 0 & -1 & 0\\
\end{array}\right)
\end{equation}

Our goal is to show that the differentials of $x_{km}$, $y_{km}$ ($m=1,...,k$, $k=1,...,n$) at the principal nilpotent element are linearly independent. For this, we introduce a total order on set $\tilde{U}=\{dF_{ij}|j=i,...,1,-1,...,-i\;\textrm{and}\;i=n,...,1\}$:
$$dF_{ij}\succ dF_{i'j'}\;\;\textrm{if}\;i>i'\;\textrm{or}\;i=i'\;\textrm{and}\;j<j'.$$

We extend this order to a partial order on the set of all differentials by simply setting each $dF_{ij}$ not from $\tilde{U}$ to be less than any differential of set $\tilde{U}$.

\begin{lemma}
The differentials of the elements $x_{km}$, $y_{km}$ at $\tilde{e}$ have the following form (here $\sim$ denotes proportionality):
$$dx_{nn}|_{F=\tilde{e}}\sim dF_{n,-n}+(\textrm{linear\;combination\;of\;terms\;lower\;than\;}dF_{n,-n})$$
$$dy_{nn}|_{F=\tilde{e}}\sim dF_{n,-n+1}+(\textrm{linear\;combination\;of\;terms\;lower\;than\;}dF_{n,-n+1})$$
$$dx_{n,n-1}|_{F=\tilde{e}}\sim dF_{n,-n+2}+(\textrm{linear\;combination\;of\;terms\;lower\;than\;}dF_{n,-n+2})$$
\begin{equation}\label{upper_triangular_matrix_of_differentials}
dy_{n,n-1}|_{F=\tilde{e}}\sim dF_{n,-n+3}+(\textrm{linear\;combination\;of\;terms\;lower\;than\;}dF_{n,-n+3})
\end{equation}
$$...$$
$$dx_{n1}|_{F=\tilde{e}}\sim dF_{n,n-1}+(\textrm{linear\;combination\;of\;terms\;lower\;than\;}dF_{n,n-1})$$
$$dy_{n1}|_{F=\tilde{e}}\sim dF_{n,n}+(\textrm{linear\;combination\;of\;terms\;lower\;than\;}dF_{n,n})$$
%$$d\Phi_{n}^{(n)}|_{F=\tilde{e}}\sim dF_{n,-n}+(\textrm{linear\;combination\;of\;terms\;lower\;than\;}dF_{n,-n})$$
%$$dy_{nn}|_{F=\tilde{e}}\sim dF_{n,-n+1}+(\textrm{linear\;combination\;of\;terms\;lower\;than\;}dF_{n,-n+1})$$
%$$dx_{km}|_{F=\tilde{e}}\sim dF_{k,k-2m}+(\textrm{linear\;combination\;of\;terms\;lower\;than\;}dF_{k,k-2m})$$
%\begin{equation}\label{upper_triangular_matrix_of_differentials}
%dy_{km}|_{F=\tilde{e}}\sim dF_{k,k-2m+1}+(\textrm{linear\;combination\;of\;terms\;lower\;than\;}dF_{k,k-2m+1})
%\end{equation}
%$$...$$
%$$d\Phi_{1}^{(n)}|_{F=\tilde{e}}\sim dF_{n,n-1}+(\textrm{linear\;combination\;of\;terms\;lower\;than\;}dF_{n,n-1})$$
%$$dy_{n1}|_{F=\tilde{e}}\sim dF_{n,n}+(\textrm{linear\;combination\;of\;terms\;lower\;than\;}dF_{n,n})$$
\end{lemma}

\begin{proof}
%Consider the highest degree components of these elements. By the previously given remarks we derive that the highest degree components of $x_{km}$ and $y_{km}$ (here we use that $\mathrm{Y}^-(2)$ is embedded as $<s_{\pm k,\pm k}(u),s_{\pm k,\mp k}(u)>$ when we consider restriction of $\varphi_k$) are respectively
%\begin{equation}\label{x_high_comp}
%-\sum_{i=0}^{2m}(-1)^i\left(\left[(F^{(k)})^i\right]_{kk}\left[(F^{(k)})^{2m-i}\right]_{kk}+\left[(F^{(k)})^i\right]_{k,-k}\left[(F^{(k)})^{2m-i}\right]_{-k,k}\right)
%\end{equation}
%\begin{center}
%and
%\end{center}
%\begin{equation}\label{y_high_comp}
%\left[(F^{(k)})^{2m-1}\right]_{kk}.
%\end{equation} 

%Already we can notice that in sum (\ref{x_high_comp}) only term
%\begin{equation}
%2\left[(F^{(k)})^{2m}\right]_{kk}+\left[(F^{(k)})^{2m}\right]_{k,-k}+\left[(F^{(k)})^{2m}\right]_{-k,k}
%\end{equation}
%will have a nonzero contribution to differential calculated at $\tilde{e}$.

Note that $dx_{nn}|_{F=\tilde{e}}$ is the only differential among all $dx_{km}|_{F=\tilde{e}}$ and $dy_{km}|_{F=\tilde{e}}$ which has a nonzero coefficient at $dF_{n,-n}$. Indeed, the only monomial with a nonzero contribution to $dF_{n,-n}$ is 
$$F_{n,-n}\cdot F_{-n,-n+1}\cdot...\cdot F_{-2,-1}\cdot F_{-1,1}\cdot F_{1,2}\cdot...\cdot F_{n-1,n}.$$
In $S(\mathfrak{sp}_{2n})$ it gives a contribution of $2n(-1)^{n-1}dF_{n,-n}$ to $dx_{nn}|_{F=\tilde{e}}$. Therefore, $dF_{n,-n}$ appears with a nonzero coefficient only in $dx_{nn}|_{F=\tilde{e}}$.

Now we look at all $dF_{n,-n+2i}$ ($i\geq1$) with $-n+2i<0$. Among all $dx_{km}|_{F=\tilde{e}}$ and $dy_{km}|_{F=\tilde{e}}$ with nonzero coefficients at $dF_{n,-n+2i}$, the one having the smallest value of $k+m$ is $dx_{n,n-i}|_{F=\tilde{e}}$. Clearly, the one with the smallest $k+m$ must have $k=n$, otherwise, $F_{n,-n+2i}$ would not be an entry of $F^{(k)}$. Then $m$ has to be at least $n-i$ since a nonzero contribution to $dF_{n,-n+2i}$ arises only from monomials of degree not less than $2n-2i$. Note that there is only one suitable monomial, namely:
$$F_{n,-n+2i}\cdot F_{-n+2i,-n+2i+1}\cdot...\cdot F_{-2,-1}\cdot F_{-1,1}\cdot F_{1,2}\cdot...\cdot F_{n-1,n}.$$
Such monomial occurs several number of times and overall contributes $(2n-2i)(-1)^{n-1-2i}dF_{n,-n+2i}=(2n-2i)(-1)^{n-1}dF_{n,-n+2i}$ to $dx_{n,n-i}|_{F=\tilde{e}}$. Hence the coefficient at $dF_{n,-n+2i}$ is nonzero.

In the same manner we treat the cases of $dF_{n,n-2i+1}$ ($i\geq1$) with $n-2i+1>0$. Again, we note that, among all $dx_{km}|_{F=\tilde{e}}$ and $dy_{km}|_{F=\tilde{e}}$ having nonzero coefficients at $dF_{n,n-2i+1}$, the one with the smallest value of $k+m$ is $dx_{n,i}|_{F=\tilde{e}}$: indeed, it is necessary to have $k=n$ and $m\geq i$ for $dF_{n,n-2i+1}$ to appear in $dx_{km}$. The only monomial in $x_{n,i}$ which contributes to $dF_{n,n-2i+1}$ is
$$F_{n,n-2i+1}\cdot F_{n-2i+1,n-2i+2}\cdot...\cdot F_{n-1,n}.$$
Each occurrence of this monomial gives a term equal to $(-1)^{2i}dF_{n,n-2i+1}$ in the differential at $F=\tilde{e}$.

Similarly, we can prove that, among all $dx_{km}|_{F=\tilde{e}}$ and $dy_{km}|_{F=\tilde{e}}$ with nonzero coefficients at $dF_{n,-n+1}$, the one having the smallest value of $k+m$ is $dy_{nn}$. First we notice that $k$ and $m$ have to be equal to $n$. Then we state that nonzero contributions can be only obtained from the following monomial:
$$F_{n,-n+1}\cdot F_{-n+1,-n+2}\cdot...\cdot F_{-2,-1}\cdot F_{-1,1}\cdot F_{1,2}\cdot...\cdot F_{n-1,n}.$$

The proofs of the two following statements are identical to the already given ones:
\begin{itemize}
\item Let $-n+2i+1<0$. Then $dy_{n,n-i}$ has the smallest value of $k+m$ among all $dx_{km}|_{F=\tilde{e}}$ and $dy_{km}|_{F=\tilde{e}}$ with nonzero coefficients at $dF_{n,-n+2i+1}$.
\item Let $n-2i+2>0$. Then $dy_{ni}$ has the smallest value of $k+m$ among all $dx_{km}|_{F=\tilde{e}}$ and $dy_{km}|_{F=\tilde{e}}$ with nonzero coefficients at $dF_{n,n-2i+2}$.
\end{itemize}
\end{proof}

In case of $\mathfrak{o}_{2n+1}$ the principal nilpotent element is
\begin{equation}
\tilde{e}=\sum\limits_{i=-1}^{n-1}1\otimes E_{i,i+1}+\sum\limits_{i=-n}^{-2}\left(-1\right)\otimes E_{i,i+1}.
\end{equation}
Doing similar steps as in the symplectic case we arrive at:
$$dx_{km}|_{F=\tilde{e}}\sim dF_{k,k-2m+1}+(\textrm{linear\;combination\;of\;terms\;lower\;than\;}dF_{k,k-2m+1})$$
\begin{equation}\label{upper_triangular_matrix_of_differentials}
dy_{km}|_{F=\tilde{e}}\sim dF_{k,k-2m+2}+(\textrm{linear\;combination\;of\;terms\;lower\;than\;}dF_{k,k-2m+2})
\end{equation}

Again by combining the above equations for $k=n,n-1,\ldots,1$ we prove the linear independence of all $dx_{km}|_{F=\tilde{e}}$ and $dy_{km}|_{F=\tilde{e}}$, hence algebraic independence of the set $x_{km},y_{km}$ with $k=1,\ldots,n$ and $m=1,\ldots,k$.
\end{proof}

Now we can proceed to Theorem A of this paper.

\textit{Proof of the Theorem A:} The Poincar\'e series for the sublagebra $\lim\limits_{\varepsilon\to0}\mathcal{A}_{\mu(\varepsilon)}$ coincides with that for $\mathcal{A}_\mu$ with generic $\mu\in\mathfrak{h}^{reg}$ and is equal to:
\begin{equation}
P_{\mathcal{A}_{\mu(\varepsilon)}}(x)=\prod\limits_{k=1}^n\frac{1}{(1-x^{2k-1})^{n-k+1}(1-x^{2k})^{n-k+1}}.
\end{equation}
On the other hand the elements $\varphi_k(s_{11}^{(2m-1)}),S_{k}^{(m)}$ are algebraically independent by Proposition~\ref{alg_ind_lemma} hence generate a commutative subalgebra $\mathcal{A}^{\mp}$  with the same Poincar\'e series. By Proposition~\ref{pr:BinA} we have $$\varphi_k(s_{11}^{(2m-1)})\ ,\ \varphi_k(\sigma_2^{(2m)})\in\lim\limits_{\varepsilon\to0}\mathcal{A}_{\mu(\varepsilon)},$$ and by Theorem~\ref{th:quantShuvalov} we have $S_k^{(m)}\in\lim\limits_{\varepsilon\to0}\mathcal{A}_{\mu(\varepsilon)}$, so Theorem~A follows.  $\Box$

\section{Proof of theorem B.}

\subsection{Asymptotics of the eigenbasis.}

Let $\alpha_1,\beta_1,\ldots,\alpha_k,\beta_k$ be fixed numbers such that $\alpha_i-\beta_i$ are nonnegative integers. Consider the following representation of $\textrm{Y}(2)$:
\begin{equation}
L(z_1,...,z_k)=L(z_1+\alpha_1,z_1+\beta_1)\otimes...\otimes L(z_k+\alpha_k,z_k+\beta_k).
\end{equation}
where $z_i\in\mathbb{C}$ are free parameters.

$L(z_1,...,z_k)$ is naturally a $\mathrm{Y}^-(2)$-module, so the commutative subalgebra $\mathcal{B}^-$ acts on $L(z_1,...,z_k)$. We can make the space $L(z_1,...,z_k)$ independent on the $z_i$'s by identifying it with the tensor product $L=V_{\alpha_1-\beta_1}\otimes...\otimes V_{\alpha_k-\beta_k}$ as the $U(\mathfrak{sl}_2)^{\otimes k}$-module. Then the image of $\mathcal{B}^-$ in $\text{End}(L)$ depends on $z_i$, we denote it by $\mathcal{B}^-(z_1,...,z_k)$. Let us describe the limit of $\mathcal{B}^-(z_1,...,z_k)$ as $z_i\to\infty$: 

%The following lemma shows that, in the limit $z_i\to\infty$, this basis goes to the tensor product of the weight bases in each factor. 

\begin{lemma}\label{le:assymptotic_sp}
Suppose that $z_i=tu_i$ ($i=1,...,k$) and $u_i\ne u_j$. Then the limit of the commutative subalgebra $lim_{t\rightarrow\infty}\mathcal{B}^-(tu_1,...,tu_k)$ is generated by $h^{(i)}=1^{\otimes(i-1)}\otimes h\otimes1^{\otimes(k-i)}\in U(\mathfrak{sl}_2)^{\otimes k}$, for all $i=1,\ldots,k$.
\end{lemma}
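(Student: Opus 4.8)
The plan is to compute the image of the generators $s_{11}^{(2m+1)}$ of $\mathcal{B}^-$ in $\mathrm{End}(L)$ explicitly in terms of the parameters $z_i$, track the leading-order behavior as $z_i=tu_i\to\infty$, and show that the top-degree terms generate precisely the algebra $\langle h^{(1)},\ldots,h^{(k)}\rangle$.

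First I would use formula (\ref{s_t_relation_2}) together with the coproduct (\ref{coproduct}) and the evaluation homomorphism (\ref{yangian_action}) to write $s_{ij}(u)$ acting on $L(z_1,\ldots,z_k)$ as a product of $2k$ matrices of the form $1+E_{ab}(u-z_i)^{-1}$ (and $1+E_{ab}(-u-z_i)^{-1}$), expanded over all tensor legs. Passing to the variable $z_i = tu_i$ and rescaling, the entries $s_{11}(u)$ become, after identifying $L(z_1,\ldots,z_k)$ with the fixed $U(\mathfrak{sl}_2)^{\otimes k}$-module $L$, rational functions of $u$ whose coefficients are polynomials in $t^{-1}$ with matrix coefficients in $U(\mathfrak{sl}_2)^{\otimes k}$. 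The key computation is to isolate, for each fixed leg $i$, the term of lowest order in $(u - tu_i)^{-1}$: that term is $1 + t^{-1}(u/t - u_i)^{-1}\,h^{(i)}/2 + \ldots$, the diagonal $\mathfrak{sl}_2$-Cartan contribution, because $t_{11}(u)$ acts on $L(tu_i+\alpha_i, tu_i+\beta_i)$ with leading Cartan part $E_{11}$ on the $i$-th leg. All off-diagonal contributions $E_{1,-1}, E_{-1,1}$ are suppressed by an extra factor of $t^{-1}$ relative to the Cartan part when one separates poles at distinct points $tu_i$ (here the hypothesis $u_i \ne u_j$ is essential — it guarantees the poles do not collide and so the partial-fraction decomposition is clean).

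Next, I would argue that $\sigma_1(u, F_{11}) = \tfrac12(s_{-1,-1}(u) - s_{11}(u))$, evaluated on $L$, has a well-defined limit of its coefficients after the appropriate rescaling, and that this limit, as a function of $u$, is a product (or sum, depending on normalization) over legs of the local Cartan factors; extracting the coefficients at $u^{-1}, u^{-3}, \ldots$ and taking $t \to \infty$ yields symmetric-function-type combinations of $u_1 h^{(1)}, \ldots, u_k h^{(k)}$ together with the central contributions from $\sigma_2$. Since the $u_i$ are pairwise distinct, these combinations separate the individual $h^{(i)}$, so the limit algebra contains all $h^{(i)}$; conversely everything in the limit is a polynomial in the $h^{(i)}$ and the (scalar) center, giving the reverse inclusion. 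One should also check that the limit of a family of commutative subalgebras in the Grassmannian is taken coefficientwise exactly as in Section~2.1, so that $\lim_{t\to\infty}\mathcal{B}^-(tu_1,\ldots,tu_k)$ is well-defined and equals what the generators produce.

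The main obstacle I expect is bookkeeping the asymptotics carefully enough to be sure that no off-diagonal term survives in the limit and that the diagonal terms do not degenerate: one must show both that the leading term of $\sigma_1$ after rescaling is nonzero (so the limit subalgebra has the full expected dimension $k$ beyond the center) and that the sub-leading terms, which a priori could contribute $E_{1,-1}E_{-1,1}$-type Casimir pieces, are genuinely lower order. This is a matter of controlling the partial-fraction expansion of a product of $2k$ rational matrix factors, and the distinctness $u_i \ne u_j$ is what makes it go through; the orthogonal analogue (with the extra one-dimensional factor $W(\delta)$) would be handled the same way but is not needed here.
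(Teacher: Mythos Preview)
Your overall strategy---compute the image of $s_{11}^{(2m+1)}$ on $L$, isolate the leading asymptotics as $t\to\infty$, observe that only Cartan pieces survive, and then invoke distinctness of the $u_i$ to recover each $h^{(i)}$---matches the paper's. But the execution you sketch diverges from the paper in one important way and is too vague at the crucial step.

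\textbf{Setup mismatch.} You propose to write the action as a product of factors $1+E_{ab}(u-z_i)^{-1}$ and $1+E_{ab}(-u-z_i)^{-1}$ and then do a partial-fraction analysis at the poles $u=\pm tu_i$. In the paper's conventions this is not what happens: the evaluation map (\ref{yangian_action}) is at $u=0$, not at $z_i$, and the $z_i$ enter through the shifted highest weights of the $\mathfrak{gl}_2$-modules $L(z_i+\alpha_i,z_i+\beta_i)$. Concretely $E_{11}$ acts on the $i$-th leg as $z_i+\tfrac{\alpha_i+\beta_i}{2}+\tfrac{h}{2}$ and $E_{-1,-1}$ as $z_i+\tfrac{\alpha_i+\beta_i}{2}-\tfrac{h}{2}$, so the only pole in $u$ is at $0$ and the $z_i$ appear \emph{polynomially} in the coefficients $s_{11}^{(r)}$. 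The paper therefore does not do partial fractions at all; it does polynomial degree-counting in the $z_i$.

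\textbf{What the paper actually computes.} After expanding $\Delta^{k-1}(s_{11}(u))$ as a double sum over index strings $I,J\in\{-1,1\}^{k-1}$, the paper shows that the image of $s_{11}^{(2m+1)}$ has $z$-degree at most $2m$, and that degree $2m$ is attained only by the single term $I=(1,\ldots,1)$, $J=(-1,\ldots,-1)$, which contributes
\[
\sum_{i=1}^k e_m\bigl(z_1^2,\ldots,\widehat{z_i^2},\ldots,z_k^2\bigr)\,h^{(i)}.
\]
Off-diagonal choices of $I,J$ force at least two non-Cartan matrix units, each costing a power of $u^{-1}$ without producing a $z$, so they land in degree $\le 2m-1$; this is the precise version of your ``off-diagonal contributions are suppressed.'' After rescaling by $t^{-2m}$ and sending $t\to\infty$, the limit generators are exactly $\sum_i e_m(u_1^2,\ldots,\widehat{u_i^2},\ldots,u_k^2)\,h^{(i)}$ for $m=0,\ldots,k-1$.

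\textbf{The invertibility step.} Your ``symmetric-function-type combinations \ldots\ separate the individual $h^{(i)}$'' needs to become the concrete statement that the $k\times k$ matrix $\bigl(e_j(u_1^2,\ldots,\widehat{u_i^2},\ldots,u_k^2)\bigr)_{i,j}$ is nonsingular. The paper argues this by observing that its determinant is an alternating polynomial in $u_1^2,\ldots,u_k^2$ whose top monomial has coefficient $1$, hence it equals the Vandermonde $\prod_{i<j}(u_i^2-u_j^2)$. (Note this actually uses $u_i^2\ne u_j^2$, i.e.\ $u_i\ne\pm u_j$, slightly more than the stated hypothesis.)

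In short: keep your outline, but replace the pole/partial-fraction picture by the paper's polynomial-degree bookkeeping, and make the leading-term formula and the Vandermonde determinant explicit.
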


\begin{proof} First, we compute the image of $s_{11}(u)$ in $U(\mathfrak{gl}_2)^{\otimes k}$:
$$\Delta^{k-1}(s_{11}(u))=\Delta^{k-1}\left(t_{11}(u)t_{-1,-1}(-u)-t_{1,-1}(u)t_{-1,1}(-u)\right)=$$ $$=\Delta^{k-1}\left(t_{11}(u)\right)\Delta^{k-1}\left(t_{-1,-1}(-u)\right)-\Delta^{k-1}\left(t_{1,-1}(u)\right)\Delta^{k-1}\left(t_{-1,1}(-u)\right)=$$ $$=\sum\limits_{\substack{i_1,i_2,...,i_{k-1}\in\{-1,1\}\\j_1,j_2,...,j_{k-1}\in\{-1,1\}}}t_{1,i_1}(u)t_{-1,j_1}(-u)\otimes t_{i_1,i_2}(u)t_{j_1,j_2}(-u)\otimes...\otimes t_{i_{k-1},1}(u)t_{j_{k-1},-1}(-u)-$$ $$-\sum\limits_{\substack{i_1,i_2,...,i_{k-1}\in\{-1,1\}\\j_1,j_2,...,j_{k-1}\in\{-1,1\}}}t_{1,i_1}(u)t_{-1,j_1}(-u)\otimes t_{i_1,i_2}(u)t_{j_1,j_2}(-u)\otimes...\otimes t_{i_{k-1},-1}(u)t_{j_{k-1},1}(-u)\mapsto$$ \begin{equation}\label{term1}\sum\limits_{\substack{i_1,i_2,...,i_{k-1}\in\{-1,1\}\\j_1,j_2,...,j_{k-1}\in\{-1,1\}}}\left(\delta_{1,i_1}+E_{1,i_1}u^{-1}\right)\left(\delta_{-1,j_1}-E_{-1,j_1}u^{-1}\right)\otimes...\otimes\left(\delta_{i_{k-1},1}+E_{i_{k-1},1}u^{-1}\right)\left(\delta_{j_{k-1},-1}-E_{j_{k-1},-1}u^{-1}\right)-\end{equation} \begin{equation}\label{term2}\sum\limits_{\substack{i_1,i_2,...,i_{k-1}\in\{-1,1\}\\j_1,j_2,...,j_{k-1}\in\{-1,1\}}}\left(\delta_{1,i_1}+E_{1,i_1}u^{-1}\right)\left(\delta_{-1,j_1}-E_{-1,j_1}u^{-1}\right)\otimes...\otimes\left(\delta_{i_{k-1},-1}+E_{i_{k-1},-1}u^{-1}\right)\left(\delta_{j_{k-1},1}-E_{j_{k-1},1}u^{-1}\right).\end{equation}

Note that $E_{11}$ and $E_{-1,-1}$ act on $L\left(z_i+\alpha_i,z_i+\beta_i\right)$ as $z_i+\frac{\alpha_i+\beta_i}{2}+\frac{h}{2}$ and $z_i+\frac{\alpha_i+\beta_i}{2}-\frac{h}{2}$ respectively, where $h$ is the standard generator of $\mathfrak{sl}_2$ which acts as $\textrm{diag}(\alpha_i-\beta_i,\alpha_i-\beta_i-2,...,-\alpha_i+\beta_i+2,-\alpha_i+\beta_i)$. The element $E_{1,-1}$ acts simply as the $\mathfrak{sl}_2$ generator $e$ and $E_{-1,1}$ acts as $f$.

We want to find the image of each $s_{11}^{(2m+1)}$ ($m\in\mathbb{Z}_{\geq0}$) regarded now as an element of $\mathbb{C}\left[z_1,...,z_k\right]\otimes U(\mathfrak{sl}_2)^{\otimes k}$, i.e. express it in terms of the standard generators $e^{(i)}:=1^{\otimes(i-1)}\otimes e\otimes 1^{\otimes(k-i)}$, $h^{(i)}:=1^{\otimes(i-1)}\otimes h\otimes 1^{\otimes(k-i)}$, $f^{(i)}:=1^{\otimes(i-1)}\otimes f\otimes 1^{\otimes(k-i)}$ and the parameters $z_i$. The limit $t\to\infty$ depends on the leading term of this expression, i.e. on the highest degree component in the variables $z_i$.

The image of $s_{11}^{(2m+1)}$ ($m\in\mathbb{Z}_{\geq0}$) in $\mathbb{C}\left[z_1,...,z_k\right]\otimes U(\mathfrak{sl}_2)^{\otimes k}$ cannot have monomials of the degree greater than $2m$. Indeed, the expression of this image in the generators of $\mathbb{C}\left[z_1,...,z_k\right]\otimes U(\mathfrak{sl}_2)^{\otimes k}$ is the sum of two terms given by~(\ref{term1})~and~(\ref{term2}). Consider the term~(\ref{term1}). Suppose that at least one of the $i_1,i_2,...i_{k-1}$ is equal to -1 or at least one of $j_1,j_2,...,j_{k-1}$ is equal to 1. Then among the pairs $(1,i_1)$, $(i_1,i_2)$, ..., $(i_{k-1},1)$, $(-1,j_1)$, $(j_1,j_2)$, ..., $(j_{k-1},-1)$ there are at least two consisting of different elements. In such case the factors of the summand corresponding to the choice of $I=(i_1,i_2,...,i_{k-1})$ and $J=(j_1,j_2,...,j_{k-1})$ will contribute to the degree of $u^{-1}$ but not to the degree in $z_i$. Since every $u^{-1}$ increases the degree of the polynomial component by at most 1 it follows that under our assumption we cannot get a polynomial of the degree more than $2m-1$. Now suppose we have $I=(1,1,...,1)$ and $J=(-1,-1,...,-1)$. The corresponding summand is:
\begin{equation}\label{good_choice}
\left(1+hu^{-1}+\left(z_1^2-h^2/4\right)u^{-2}\right)\otimes\left(1+hu^{-1}+\left(z_2^2-h^2/4\right)u^{-2}\right)\otimes...\otimes\left(1+hu^{-1}+\left(z_k^2-h^2/4\right)u^{-2}\right)
\end{equation}
Clearly, if we need to get $u^{-2m-1}$ in one of the tensor terms we have to choose $hu^{-1}$ which bounds the maximal possible degree of the coefficient with $2m$.

In the term~(\ref{term2}) we have at least two pairs among $(1,i_1)$, $(i_1,i_2)$, ..., $(i_{k-1},-1)$, $(-1,j_1)$, $(j_1,j_2)$, ..., $(j_{k-1},1)$ consisting of different elements, independently of the choice of $I=(i_1,i_2,...,i_{k-1})$ and $J=(j_1,j_2,...,j_{k-1})$. So the above argument guarantees that all possible monomials in~(\ref{term2}) have the degree not exceeding $2m-1$.

On the other hand, the degree of $2m$ in $z_i$ can be obtained only in the case described above (\ref{good_choice}) and the corresponding term can be explicitly written as:
\begin{equation}\label{spectrum_entries}
\sum_{i=1}^ke_m(z_1^2,...,\widehat{z_i^2},...,z_k^2)\otimes h^{(i)},
\end{equation}
where $e_m(z_1^2,...,\widehat{z_i^2},...,z_k^2)$ denotes elementary symmetric polynomial of degree $m$ in variables $z_j^2$ with $j\neq i$. Here $h^{(i)}$ as usual stands for $1^{\otimes(i-1)}\otimes h\otimes1^{\otimes(n-i)}\in U(\mathfrak{sl}_2)^{\otimes k}$.

Let us slightly modify the non-trivial generators $s_{11}^{(2m+1)}$ ($m\in\mathbb{Z}_{\geq0}$) of $\mathcal{B}^-$:
\begin{equation}
s_{11}^{(2m+1)}\rightsquigarrow \widetilde{s_{11}^{(2m+1)}}(t):=t^{-2m}s_{11}^{(2m+1)}
\end{equation}
for $t\in\mathbb{C}$ and recall that $z_i=tu_i$.

The limits of the images of the new generators $\widetilde{s_{11}^{(2m+1)}}(t)$ ($m\in\mathbb{Z}_{\geq0}$) as $t\to\infty$ are:
\begin{equation}\label{asympt_spectrum}
\lim\limits_{t\to\infty}\widetilde{s_{11}^{(2m+1)}}(t)=\sum_{i=1}^ke_m(u_1^2,...,\widehat{u_i^2},...,u_k^2)\otimes h^{(i)}
\end{equation}

Due to the definition of the $\mathrm{Y}^-(2)$-module $L$ the elements $s_{11}^{(2m+1)}$ with $m>k-1$ act by zero on $L$, so $s_{11}^{(2m+1)}(t)$ with $m>k-1$ act by zero as well. From (\ref{asympt_spectrum}) it follows that the $t\to\infty$ limit of the subalgebra generated by the images of $s_{11}^{(2m+1)}(t)$ with $m\le k-1$  is generated by $\{h^{(i)}|i=1,...,k\}$. Indeed, we just need to show that the following matrix is non-degenerate:
\begin{equation}\label{vandermonde_kind_of_matrix}
\left(e_j(u_1^2,...,\widehat{u_i^2},...,u_k^2)\right)_{i,j=1}^k.
\end{equation}
First we notice that its determinant is a skew-symmetric polynomial in $(u_1^2,u_2^2,...,u_k^2)$. Then we look at the coefficient before highest monomial $u_1^{2k-2}\cdot u_2^{2k-4}\cdot....\cdot u_{k-1}^2$. Such monomial in the above determinant appears just once, namely, as the product of the diagonal elements. Therefore, the determinant of our matrix coincides with the Vandermond polynomial which proves the non-degeneracy of the matrix. Hence the limit $\lim\limits_{t\rightarrow\infty}\mathcal{B}^{-}(tu_1,...,tu_k)$ is the subalgebra generated by $h^{(i)}\in U(\mathfrak{sl}_2)^{\otimes k}$ ($i=1,...,k$). 
\end{proof}

Now let us turn to the case of $\mathfrak{o}_{2n+1}$. Consider the following $\mathrm{Y}^+(2)$-module:
\begin{equation}
L'(z_1,...,z_k,\delta)=L(z_1,...,z_k)\otimes W(\delta),
\end{equation}
where $W(\delta)$ ($\delta\in\mathbb{C}$) is one-dimensional representation of $\mathrm{Y}^+(2)$ introduced earlier in Section 1. Then asymptotically the spectrum of $L'(z_1,...,z_n,\delta)$ is simple due to the following lemma.

\begin{lemma}\label{le:assymptotic_o}
Suppose that $z_i=tu_i$ ($i=1,...,k$) and $u_i\ne u_j$. Then the limit of the commutative subalgebra $lim_{t\rightarrow\infty}\mathcal{B}^+(tu_1,...,tu_k)$ is generated by $h^{(i)}+(\delta-\frac{1}{2})\cdot id\in U(\mathfrak{sl}_2)^{\otimes k}\otimes\mathbb{C}[\delta]$, for all $i=1,\ldots,k$.
\end{lemma}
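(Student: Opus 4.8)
The plan is to mimic the computation carried out for $\textrm{Y}^-(2)$ in Lemma~\ref{le:assymptotic_sp}, adding the contribution of the extra one-dimensional tensor factor $W(\delta)$. First I would use the Hopf coideal structure: since $\textrm{Y}^+(2)$ is a left coideal subalgebra, the action of $s_{11}(u)$ on $L'(z_1,\dots,z_k,\delta)=L(z_1,\dots,z_k)\otimes W(\delta)$ is obtained by applying $\Delta^{k}$ rather than $\Delta^{k-1}$, i.e. the image is exactly the expression (\ref{term1})$-$(\ref{term2}) already computed for the first $k$ factors, tensored on the right with the scalar $s_{\bullet\bullet}(\pm u)$-action on $W(\delta)$ given by (\ref{1_dim_repr_+}). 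The key point is that $s_{11}(u)$ acts on $W(\delta)$ by the scalar $\frac{u+\delta}{u+1/2}$ and $s_{-1,-1}(u)$ by $\frac{u-\delta+1}{u+1/2}$, while $s_{1,-1},s_{-1,1}$ act by $0$, so in the sum over index sequences only the all-$+1$/all-$-1$ choice (the analogue of (\ref{good_choice})) survives in top degree, and the last tensor slot contributes just a scalar series in $u^{-1}$.

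Next I would isolate the top-degree-in-$z_i$ part. As in the symplectic case, any index sequence other than $I=(1,\dots,1)$, $J=(-1,\dots,-1)$ produces at least two ``off-diagonal'' pairs, each eating a power of $u^{-1}$ without contributing to the $z_i$-degree, so such terms have $z$-degree at most $2m-1$ in $s_{11}^{(2m+1)}$. For the surviving term, the first $k$ factors give, exactly as in (\ref{good_choice}), the product $\bigotimes_{j=1}^k\bigl(1+h^{(j)}u^{-1}+(z_j^2-h^2/4)u^{-2}\bigr)$, and the $W(\delta)$-factor contributes $s_{11}(u)|_{W(\delta)}=\frac{u+\delta}{u+1/2}=1+(\delta-\tfrac12)u^{-1}+O(u^{-2})$, whose coefficient of $u^{-1}$ is the constant $(\delta-\tfrac12)$ and which carries no $z_i$'s at all. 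Collecting the coefficient of $u^{-2m-1}$ in the rescaled generator $\widetilde{s_{11}^{(2m+1)}}(t)=t^{-2m}s_{11}^{(2m+1)}$ with $z_i=tu_i$ and letting $t\to\infty$, the $u^{-2m-1}$ must be distributed so that exactly one slot supplies a $u^{-1}$ factor (either one of the $h^{(j)}u^{-1}$ terms or the $(\delta-\tfrac12)u^{-1}$ term from $W(\delta)$) and the remaining $m$ slots each supply a $z^2 u^{-2}$: this yields
\begin{equation}\label{asympt_spectrum_o}
\lim_{t\to\infty}\widetilde{s_{11}^{(2m+1)}}(t)=\sum_{i=1}^k e_m(u_1^2,\dots,\widehat{u_i^2},\dots,u_k^2)\otimes\bigl(h^{(i)}+(\delta-\tfrac12)\cdot\mathrm{id}\bigr),
\end{equation}
where for the $W(\delta)$-slot we use $e_m(u_1^2,\dots,u_k^2)$, but one checks $e_m(u_1^2,\dots,u_k^2)=\sum_i u_i^2\,e_{m-1}(\dots\widehat{u_i^2}\dots)/\,?$ — more cleanly, re-index so that the scalar $(\delta-\tfrac12)$ is absorbed into each $h^{(i)}$ as in the statement, which is legitimate because $\sum_i e_m(\dots\widehat{u_i^2}\dots)$ together with the total elementary symmetric polynomial span the same space. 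I would present this absorption carefully to land exactly on the generators $h^{(i)}+(\delta-\tfrac12)\cdot\mathrm{id}$.

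Finally, I would conclude non-degeneracy exactly as before: the operators on the left-hand sides of (\ref{asympt_spectrum_o}) for $m=0,1,\dots,k-1$ span the subalgebra generated by $\{h^{(i)}+(\delta-\tfrac12)\cdot\mathrm{id}\}$ precisely because the matrix $\bigl(e_j(u_1^2,\dots,\widehat{u_i^2},\dots,u_k^2)\bigr)_{i,j}$ is non-degenerate (its determinant is the Vandermonde polynomial in the $u_i^2$, by the same leading-monomial argument used in the proof of Lemma~\ref{le:assymptotic_sp}), together with the observation that $s_{11}^{(2m+1)}$ acts by zero on $L'$ for $m>k-1$ so no further generators appear. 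The main obstacle I anticipate is purely bookkeeping: tracking how the scalar action of $W(\delta)$ on $s_{11}$ and $s_{-1,-1}$ enters the top-degree term and verifying that it shifts each $h^{(i)}$ by the same constant $(\delta-\tfrac12)$ rather than producing an independent new generator — in other words, checking that the $W(\delta)$-contribution is exactly a central shift and does not change the span. Everything else is a verbatim repetition of the symplectic argument.
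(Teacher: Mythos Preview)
Your approach is exactly the paper's: apply the coideal coproduct, use that $s_{1,-1},s_{-1,1}$ vanish on $W(\delta)$, and repeat the top-degree-in-$z$ analysis of Lemma~\ref{le:assymptotic_sp} with the extra scalar factor $s_{11}(u)|_{W(\delta)}=1+(\delta-\tfrac12)u^{-1}+\cdots$. One small correction to your setup: the image on $L\otimes W(\delta)$ is $(\text{term1})\otimes s_{11}(u)|_{W(\delta)}+(\text{term2, unsigned})\otimes s_{-1,-1}(u)|_{W(\delta)}$ rather than $(\text{term1}-\text{term2})$ tensored with a single scalar, though this is harmless since the second summand has $z$-degree $\le 2m-1$ anyway. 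Your bookkeeping worry about absorbing the $(\delta-\tfrac12)$ term is legitimate---the honest leading coefficient is $\sum_i e_m(\ldots\widehat{u_i^2}\ldots)\,h^{(i)}+e_m(u_1^2,\ldots,u_k^2)(\delta-\tfrac12)$, and the paper waves past this point as well; for the corollary that follows (where $\delta$ is specialized to a number) the resulting algebra still separates all weight vectors, so the discrepancy is immaterial.
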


\begin{proof} By using the formula for coproduct of $s_{ab}(u)$ ($|a|,|b|=1$):
\begin{equation}
\Delta\left(s_{ab}(u)\right)=\sum_{c,d\in\{-1,1\}}t_{ac}(u)t_{-b,-d}(-u)\otimes s_{cd}(u)
\end{equation}
we find that
$$\Delta^k\left(s_{11}(u)\right)=\sum\limits_{\substack{i_1,...,i_{k-1}\in\{-1,1\}\\j_1,...,j_{k-1}\in\{-1,1\}\\c,d\in\{-1,1\}}}t_{1,i_1}(u)t_{-1,-j_1}(-u)\otimes t_{i_1,i_2}(u)t_{j_1,j_2}(-u)\otimes...\otimes t_{i_{k-1},c}(u)t_{j_{k-1},-d}(-u)\otimes s_{cd}(u)=$$
$$=\sum\limits_{\substack{i_1,...,i_{k-1}\in\{-1,1\}\\j_1,...,j_{k-1}\in\{-1,1\}}}t_{1,i_1}(u)t_{-1,-j_1}(-u)\otimes t_{i_1,i_2}(u)t_{j_1,j_2}(-u)\otimes...\otimes t_{i_{k-1},1}(u)t_{j_{k-1},-1}(-u)\otimes s_{11}(u)+$$
$$+\sum\limits_{\substack{i_1,...,i_{k-1}\in\{-1,1\}\\j_1,...,j_{k-1}\in\{-1,1\}}}t_{1,i_1}(u)t_{-1,-j_1}(-u)\otimes t_{i_1,i_2}(u)t_{j_1,j_2}(-u)\otimes...\otimes t_{i_{k-1},-1}(u)t_{j_{k-1},1}(-u)\otimes s_{-1,-1}(u)$$
since $s_{1,-1}(u)$ and $s_{-1,1}(u)$ act as zeros on $W(\delta)$.

Therefore, by making similar observations as in the symplectic case we derive that the image of $s_{11}^{(2m+1)}$ in $U\left(\mathfrak{sl}_2\right)^{\otimes k}\otimes\mathbb{C}[\delta]$ has the following term with the highest degree polynomial coefficient:
\begin{equation}
\sum_{i=1}^ke_m(z_1^2,...,\widehat{z_i^2},...,z_k^2)\otimes\left(h^{(i)}+\left(\delta-\frac{1}{2}\right)\cdot id\right),
\end{equation}
where $e_m(z_1^2,...,\widehat{z_i^2},...,z_k^2)$ are the same symmetric polynomials as before. Hence we conclude that the subalgebra $\lim\limits_{t\rightarrow\infty}\mathcal{B}^+(tu_1,...,tu_k)$ coincides with the subalgebra generated by $\{h^{(i)}+\left(\delta-\frac{1}{2}\right)\cdot id|i=1,...,k\}$. 
\end{proof}

\begin{cor} The subalgebra $\mathcal{B}^-$ (resp. $\mathcal{B}^+$) acts on $L(z_1,\ldots,z_k)$ (resp. $L'(z_1,\ldots,z_k,1)\oplus L'(z_1,\ldots,z_k,0)$ or $L'(z_1,\ldots,z_k,\frac{1}{2})\oplus L'(z_1-1,\ldots,z_k,\frac{1}{2})$) with simple spectrum for $z_i=tu_i$ with $t$ being large enough and $u_i\ne u_j$. 
%the representation of $\mathrm{Y}^-(2)$ ($\mathrm{Y}^+(2)$) 
%provided by Theorem~\ref{th:GTsp} (Theorem~\ref{th:GTo}) with simple spectrum.
\end{cor}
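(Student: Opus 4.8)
The plan is to deduce the Corollary from Lemmas~\ref{le:assymptotic_sp} and~\ref{le:assymptotic_o} together with the elementary fact that ``simple spectrum'' is an open condition along the ray $z_i=tu_i$. Put $s:=t^{-1}$ and replace the non-central generators of $\mathcal{B}^{\mp}$ by the rescaled ones $\widetilde{s_{11}^{(2m+1)}}(t)=t^{-2m}s_{11}^{(2m+1)}$, substituting $z_i=tu_i$ afterwards. By the degree bounds established in the proofs of the two lemmas, the operator by which $\widetilde{s_{11}^{(2m+1)}}(t)$ acts on the fixed finite-dimensional space (that is, $V_{\alpha_1-\beta_1}\otimes\cdots\otimes V_{\alpha_k-\beta_k}$, resp. the one- or two-summand $\mathfrak{sl}_2$-module of the statement) is a polynomial in $s$, regular at $s=0$, whose value at $s=0$ is $\sum_i e_m(u_1^2,\dots,\widehat{u_i^2},\dots,u_k^2)\,h^{(i)}$ in the symplectic case and $\sum_i e_m(u_1^2,\dots,\widehat{u_i^2},\dots,u_k^2)\bigl(h^{(i)}+(\delta-\tfrac12)\mathrm{id}\bigr)$ in the orthogonal case. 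Fix generic constants $c_0,\dots,c_{k-1}$ and set $X(s):=\sum_{m=0}^{k-1}c_m\,\widetilde{s_{11}^{(2m+1)}}(t)$, which lies in $\mathcal{B}^{\mp}(tu_1,\dots,tu_k)$ for every $t$.

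First I would check that $X(0)$ has simple spectrum. Since the matrix~\eqref{vandermonde_kind_of_matrix} is non-degenerate, as the $c_m$ vary the vector $(d_1,\dots,d_k)$ with $d_i=\sum_m c_m e_m(u_1^2,\dots,\widehat{u_i^2},\dots,u_k^2)$ runs over all of $\mathbb{C}^k$; hence $X(0)$ is a generic element of the span of the $h^{(i)}$ (resp. of the $h^{(i)}+(\delta-\tfrac12)\mathrm{id}$), and on each irreducible summand its eigenvalue on a weight line $\mu=(\mu_1,\dots,\mu_k)$ equals $\sum_i d_i\mu_i$ up to an additive constant coming from the $\delta$-shift of that summand. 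Because every $\mathfrak{sl}_2$-weight space of a tensor product of $\mathfrak{sl}_2$-irreducibles is one-dimensional, for generic $d$ these numbers are pairwise distinct within each summand. In the two-summand orthogonal cases one must moreover see that the two blocks do not collide: a coincidence of eigenvalues across the blocks would force $\mu_i-\mu_i'$ to equal one and the same constant ($0$ when the two summands differ only in the first tensor factor, $\pm1$ when they differ only in the one-dimensional $W$-factor) for all $i$; this is impossible, since in at least one index the two summands involve the same $\mathfrak{sl}_2$-irreducible, all of whose weights have a fixed parity, while in the remaining index the $\mathfrak{sl}_2$-types differ by exactly one unit and hence are of opposite parity. (Alternatively, the two summands carry distinct central characters of $\mathrm{Y}^{\pm}(2)$, so the centre already separates them and it suffices to treat each irreducible summand on its own.) Thus $X(0)$ is diagonalizable with pairwise distinct eigenvalues.

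Finally, the discriminant of the characteristic polynomial of the operator $X(s)$ on our finite-dimensional module is a polynomial in $s$; by the previous paragraph it does not vanish at $s=0$, hence it is non-zero for all $s$ in a punctured neighbourhood of $0$, i.e. for all sufficiently large $t$. For such $t$ the operator $X(t)$ has pairwise distinct eigenvalues, and since $X(t)\in\mathcal{B}^{\mp}(tu_1,\dots,tu_k)$ and this algebra is commutative, every element of $\mathcal{B}^{\mp}(tu_1,\dots,tu_k)$ is diagonal in the eigenbasis of $X(t)$; therefore its joint eigenspaces are lines and the spectrum is simple. The only genuinely non-formal step is the separation of the two blocks in the orthogonal case, treated above via parity of $\mathfrak{sl}_2$-weights (or via central characters); the semicontinuity part is routine once the limit subalgebra has been identified by Lemmas~\ref{le:assymptotic_sp} and~\ref{le:assymptotic_o}.
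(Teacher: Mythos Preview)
Your argument is correct and follows the same underlying idea as the paper: show that the limit subalgebra at $t=\infty$ has simple spectrum, then use openness of this condition to deduce it for large finite $t$. The paper's proof is a two-line sketch that simply notes the limit subalgebra is generated by the $h^{(i)}$ (resp.\ $h^{(i)}+(\delta-\tfrac12)\mathrm{id}$), whose joint spectrum on a tensor product of $\mathfrak{sl}_2$-irreducibles is obviously simple, and then distinguishes the two orthogonal summands exactly as you do: in the integer case $h^{(1)}$ has weights of opposite parity on the two first factors, and in the half-integer case the $\pm\tfrac12$ shifts separate the blocks.

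What you do differently is make the semicontinuity step completely explicit by packaging everything into a single element $X(s)\in\mathcal{B}^\mp$ and invoking the non-vanishing of the discriminant of its characteristic polynomial at $s=0$. This is more careful than the paper, which leaves the ``simple spectrum is an open condition'' step entirely to the reader. Your parenthetical alternative via central characters is also a clean way to dispose of the two-block separation and is not mentioned in the paper. The one sentence combining both orthogonal cases (``in at least one index \ldots\ while in the remaining index \ldots'') is slightly awkward, since in the half-integer case \emph{all} tensor factors coincide and the obstruction is purely the parity $\mu_i-\mu_i'\in 2\mathbb{Z}$ versus the required shift $\pm1$; it would read better if you treated the two cases in separate sentences. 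But the mathematics is sound.
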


\begin{proof} The symplectic case immediately follows from Lemma \ref{le:assymptotic_sp}. 

In orthogonal case when $\lambda_i$ are integers by Lemma \ref{le:assymptotic_o} we notice that this subalgebra is generated by $h^{(i)}$ ($i=1,...,k$) and $h^{(1)}$ acts differently on the first components of $U$ and $U'$, therefore, the joint spectrum is simple. In the second case when $\lambda_i$ are half-integers we have subalgebra $\{h^{(i)}-\frac{1}{2}\cdot id|i=1,...,k\}$ acting on $U$ and subalgebra $\{h^{(i)}+\frac{1}{2}\cdot id|i=1,...,k\}$ acting on $U'$. Hence, the spectrum is simple.
\end{proof}

\subsection{Proof of the Theorem B.} Now we can index the spectrum of $\mathcal{A}^\mp$ in $V_\lambda$ by a combinatorial datum. 

We set $\alpha_1,\beta_1,\ldots,\alpha_k,\beta_k$ as in Theorems~\ref{th:GTsp}~and~\ref{th:GTo}. Note that the condition that $\mathcal{B}^\mp(z_1,\ldots,z_{k})$ has simple spectrum in $L$ is a Zariski-open condition on the parameters $z_i$. Since it is satisfied in the limit it is in fact satisfied for all $(z_1,\ldots,z_{k})$ belonging to some Zariski open dense subset $U_k\subset\mathbb{C}^{k}$. Moreover, according to \cite{KR}, Corollary~11.6, the spectrum of $\mathcal{A}_k=\varphi_k(\mathcal{B}^{\mp})$ on the multiplicity space $V_\lambda^\mu=L(\alpha_1,\beta_1)\otimes\ldots\otimes L(\alpha_k,\beta_k)$ is simple, hence the point $0=(0,\ldots,0)$ belongs to $U_k$. Since the real codimension of $U_k$ in $\mathbb{C}^{k}$ is $2$ we have a path $z(t)$ ($t\in[0,\infty)$) connecting $0$ with the limit described above, such that $z(t)\in U_k$ for all $t\in[0,\infty)$. For definiteness we can consider the complex line $\{z_l=(l-1)z|\ z\in\mathbb{C}\}$ in the space of parameters $\mathbb{C}^{k}$ and choose the path $z(t)$ going along the real half-line $[0,\infty)$ on this complex line and avoiding the ``bad'' points on it in the counter-clockwise direction. 

For any $t\in[0,\infty]$ the corresponding subalgebra $\mathcal{B}^\mp(z_1(t),\ldots,z_{k}(t))$ has simple spectrum in $L$ hence the corresponding eigenbasis is a finite cover of the segment $[0,\infty]$. So the parallel transport from $0$ to $\infty$ gives a bijection between the eigenbasis of $\mathcal{B}^\mp$ in $V_\lambda^\mu$ (at $z(0)=0$) and the eigenbasis at the limit $t\to\infty$ which is just the weight basis of $L$ as a $U(\mathfrak{sl}_2)^{\otimes k}$-module.

In the symplectic case this weight basis is just the product of the weight bases of the $\mathfrak{sl}_2$-modules $V_{\alpha_1-\beta_1},\ldots,V_{\alpha_k-\beta_k}$. It is naturally indexed by the collections of integers $\lambda_1',\ldots,\lambda_k'$ such that $\alpha_i\ge\lambda_i'\ge\beta_i$. This means that $$0\ge\lambda_1'\ge\max(\lambda_1,\mu_1)$$ $$\min(\lambda_1,\mu_1)\ge\lambda_2'\ge\max(\lambda_2,\mu_2)$$
$$\ldots$$ $$\min(\lambda_{k-1},\mu_{k-1})\ge\lambda_k'\ge\lambda_k.$$

Similarly, in the odd-dimensional orthogonal case the weight basis is indexed by the collections of $\sigma,\lambda_1',\ldots,\lambda_k'$ where $\sigma\in\{0,1\}$ indicates one of the two summands in the decomposition of the multiplicity space, $\lambda_i'\in\mathbb{Z}+\lambda_i$ indicates the $\mathfrak{sl}_2$-weight in the $i$-th tensor factor, and the following obvious inequalities hold: $$\min(0,-\sigma+\frac{1}{2})\ge\lambda_1'\ge\max(\lambda_1,\mu_1)$$ $$\min(\lambda_1,\mu_1)\ge\lambda_2'\ge\max(\lambda_2,\mu_2)$$
$$\ldots$$ $$\min(\lambda_{k-1},\mu_{k-1})\ge\lambda_k'\ge\lambda_k.$$

So in both cases of $\mathfrak{sp}_{2n}$ and $\mathfrak{o}_{2n+1}$ we get an indexing of the spectrum of $\mathcal{B}^\mp$ in the multiplicity space $V_\lambda^\mu$ by collections of numbers which form the intermediate row between $\lambda$ and $\mu$ in the (C or B type) Gelfand-Tsetlin table. Doing the above procedure for all $k=n,n-1,\ldots,1$ we get the indexing of the eigenbasis of $\mathcal{A}^\mp$ in $V_\lambda$ by the Gelfand-Tsetlin patterns of the corresponding type. $\Box$

\subsection{Speculation on definiteness of indexing and relation to crystals.} 

Note that the indexing of the eigenbasis of $\mathcal{A}^\mp$ in $V_\lambda$ depends on the choice of the path $z(t)$. We conjecture that it can be made independent of this choice if we put some reasonable restrictions on $z(t)$. Namely we can restrict to real values of the parameters $z_l$ such that $z_1>z_2>\ldots>z_{k}$. 

\begin{conj}\label{cj:SimpleSpec} Under the above assumptions on the parameters, the algebra $\mathcal{B}^-$ (resp. $\mathcal{B}^+$) acts on $L(z_1,\ldots,z_{k})$ (resp. $L'(z_1,\ldots,z_k,1)\oplus L'(z_1,\ldots,z_k,0)$ or $L'(z_1,\ldots,z_k,\frac{1}{2})\oplus L'(z_1-1,\ldots,z_k,\frac{1}{2})$) with simple spectrum.
\end{conj}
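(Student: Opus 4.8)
\medskip

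\noindent\emph{Proof strategy.} The natural plan is to combine a positive-definite contravariant form with an anti-involution of $\mathrm{Y}^{\mp}(2)$ to force diagonalizability for all real values of the parameters, and then to upgrade this to simple spectrum by proving that the image of $\mathcal{B}^{\mp}$ in the endomorphisms of the relevant module is a \emph{maximal} commutative subalgebra when $z_1>z_2>\dots>z_k$. First I would put a Hermitian structure on the module. On each tensor factor $L(z_i+\alpha_i,z_i+\beta_i)$, identified as a $\mathfrak{gl}_2$-module with the twist by a character of the irreducible $\mathfrak{sl}_2$-module $V_{\alpha_i-\beta_i}$, take the Shapovalov (contravariant) form for which $e^{\dagger}=f$ and $h^{\dagger}=h$; it is positive definite on the finite-dimensional irreducible. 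On the one-dimensional factor $W(\delta)$ take the standard form. Tensoring gives a positive-definite Hermitian form on $L(z_1,\dots,z_k)$ and on each $L'(z_1,\dots,z_k,\delta)$, hence on each direct sum appearing in the statement once the two summands are declared orthogonal.

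Second, I would produce an anti-linear anti-automorphism $\omega$ of $\mathrm{Y}(2)$, built from the transposition anti-automorphism $t_{ij}(u)\mapsto t_{ji}(u)$ composed with complex conjugation of scalars and of the spectral parameter, which restricts to $\mathrm{Y}^{\mp}(2)$ and, for \emph{real} evaluation parameters $z_i$, fixes the generators $s^{(2m+1)}_{11}$ and $\sigma_2^{(2m)}$, so that $\mathcal{B}^{\mp}$ is $\omega$-stable. The compatibility to verify is that $\omega$ implements the Hermitian adjoint for the form of the previous paragraph on the modules $L(z_1,\dots,z_k)$ and $L'(z_1,\dots,z_k,\delta)$; this is the twisted-Yangian analogue of the self-adjointness of the $\mathfrak{gl}_N$ XXX Bethe subalgebra. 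Granting it, all generators of $\mathcal{B}^{\mp}$ act by pairwise commuting normal operators, so the module decomposes into an orthogonal direct sum of joint eigenspaces for every real choice of the $z_i$, and all joint eigenvalues are real.

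Third, and this is the \textbf{main obstacle}, I would show that in the chamber $z_1>z_2>\dots>z_k$ the image of $\mathcal{B}^{\mp}$ is maximal commutative in $\mathrm{End}(L)$ (resp. in the endomorphisms of the orthogonal sum of $L'$'s): combined with the diagonalizability of step two, maximality forces every joint eigenspace to be a line, which is exactly simple spectrum. Maximality of $\mathcal{B}^{\mp}$ inside $\mathrm{Y}^{\mp}(2)$ is Theorem~\ref{th:BetheSubalgebra}(iii), but transporting it to a fixed module is delicate, since the non-maximal locus in $\mathbb{C}^k$ is a nonempty proper algebraic subset and the content of the conjecture is precisely that it misses the real chamber. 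I see two routes. \emph{(a) Deformation.} By Lemmas~\ref{le:assymptotic_sp} and~\ref{le:assymptotic_o} and the Corollary following them, along the ray $z_l=(l-1)tz$ with $z>0$ the image degenerates as $t\to\infty$ to $\langle h^{(1)},\dots,h^{(k)}\rangle$ (shifted by $\delta$), which is maximal commutative because its joint eigenspaces are spanned by single tensor products of $\mathfrak{sl}_2$-weight vectors; one must then argue that maximality is not lost as one travels back along the real ray, for which the reality and positivity from step two should be the essential input: a genuinely non-maximal real point would produce eigenvalue collisions of a type that a real-analytic monodromy argument in the spirit of \cite{KR} and \cite{R2} excludes. \emph{(b) Reality of a Schubert-type calculus.} Via the fusion procedure one may hope to identify the relevant spectra with spaces of quasi-exponentials subject to a symmetry constraint coming from the coideal $S(u)=T(u)T^t(-u)$ (equivalently, a piece of the $\mathrm{Y}(2)$ Bethe subalgebra on the doubled chain with parameters $z_1,\dots,z_k,-z_1,\dots,-z_k$); maximality on a module with real parameters would then be the symmetric analogue, for this setting, of the Mukhin--Tarasov--Varchenko theorem used in type $A$, and establishing this symmetric reality statement is, I expect, the genuine heart of the matter and the reason the assertion is stated here only as a conjecture.

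Finally, the orthogonal direct-sum cases need one extra remark. By step two each summand $L'(z_1,\dots,z_k,\delta)$ already carries a simple spectrum, and in each displayed decomposition the two summands are distinguished by the scalar by which the ``$\delta$-part'' of $h^{(i)}+(\delta-\tfrac12)\,\mathrm{id}$ acts, equivalently by a central character of $\mathrm{Y}^{+}(2)$; hence no joint eigenvalue of the first summand coincides with one of the second and the spectrum on the whole sum is simple, once the simple-spectrum claim for a single $L'$ (the case $k=n$, handled exactly as for $L$ above with the one-dimensional factor carried along) is in hand.
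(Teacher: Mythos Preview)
The statement you are addressing is labelled \emph{Conjecture}~\ref{cj:SimpleSpec} in the paper and is left unproved there; the authors only remark that it would make the indexing independent of the choice of path and then pass to a further conjecture about crystals. There is therefore no proof in the paper to compare against, and what you have written is, as you yourself acknowledge, a strategy rather than a proof.

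As a strategy, steps one and two are reasonable and in the spirit of the type~$A$ arguments you cite, though you have not actually checked that the natural anti-involution on $\mathrm{Y}(2)$ preserves $\mathrm{Y}^{\mp}(2)$, fixes $s_{11}^{(2m+1)}$, and implements the Hermitian adjoint for the tensor Shapovalov form after the evaluation; these are not automatic and need the explicit formulas~(\ref{S_series}) and~(\ref{scruch4}). Step three is, as you say, the genuine gap: neither route~(a) nor route~(b) is an argument. In~(a), the claim that ``maximality is not lost along the real ray'' is exactly the conjecture restated; in~(b), the symmetric MTV-type statement you invoke does not exist in the literature for twisted Yangians, and producing it is the whole problem.

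There is also a concrete error in your final paragraph. In the second direct sum $L'(z_1,\ldots,z_k,\tfrac12)\oplus L'(z_1-1,z_2,\ldots,z_k,\tfrac12)$ both summands have $\delta=\tfrac12$, so $(\delta-\tfrac12)\,\mathrm{id}=0$ on each and your ``$\delta$-part'' argument does not separate them. In the paper's Corollary the separation in the integer case comes instead from the fact that the first tensor factors $L(0,\beta_1)$ and $L(-1,\beta_1)$ are \emph{different} $\mathfrak{sl}_2$-modules with disjoint $h$-spectra; equivalently, one must allow $\alpha_1$ to change between the summands, not merely shift $z_1$ with $\alpha_1,\beta_1$ fixed. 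Your reduction ``once the simple-spectrum claim for a single $L'$ is in hand'' therefore does not cover this case as written.
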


The obvious consequence of Conjecture~\ref{cj:SimpleSpec} is that for any path $z(t)$ in the real sector $z_1>z_2>\ldots>z_{k-1}$ the indexing of the eigenbasis of $\mathcal{A}^\mp$ in $V_\lambda$ is the same, since this sector is simply-connected. 

The set of Gelfand-Tsetlin patterns of classical type (with the upper row fixed to be $\lambda$) carries a structure of the normal crystal $B_\lambda$ due to Littelmann \cite{L}. On the other hand, the set of eigenlines of a shift of argument subalgebra in the representation $V_\lambda$ has a structure of the normal crystal $B_\lambda$ as well, due to \cite{KR}. 

\begin{conj}
These crystal structures are the same.
\end{conj}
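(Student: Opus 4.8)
\textit{Proof strategy for the concluding conjecture.}
The two objects to be compared live on literally the same finite set, namely the set $\mathcal{P}_\lambda$ of B- or C-type Gelfand--Tsetlin patterns with top row $\lambda$ (Section~\ref{ss:GTpatterns}), and both structures make $\mathcal{P}_\lambda$ into a normal $\mathfrak{g}_n$-crystal of highest weight $\lambda$: the first by Littelmann \cite{L}, the second by \cite{KR} transported through the indexing of Theorem~B. Since a connected normal crystal of a fixed highest weight is unique up to a unique isomorphism, the conjecture is equivalent to the assertion that the \emph{identity map} of $\mathcal{P}_\lambda$ intertwines the two families of Kashiwara operators $\tilde e_i,\tilde f_i$. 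The plan is to prove this by induction on $n=\textrm{rk}\,\mathfrak{g}_n$, comparing the two structures row by row along the chain $\mathfrak{g}_n\supset\mathfrak{g}_{n-1}\supset\dots\supset\mathfrak{g}_1$ that is built into a Gelfand--Tsetlin pattern.

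The inductive step rests on a \textbf{branching compatibility}. Erasing the top two rows $(\lambda_{n\bullet},\lambda'_{n\bullet})$ of a pattern (and, in type B, the entry $\sigma_n$) defines a surjection $\mathcal{P}_\lambda\twoheadrightarrow\bigsqcup_\mu\mathcal{P}^{(n-1)}_\mu$ onto the union of the $\mathfrak{g}_{n-1}$-pattern sets, whose fibres are exactly the multiplicity-space data. One must show that for both crystal structures these fibres are the connected $\mathfrak{g}_{n-1}$-subcrystals $B_\mu$. For Littelmann's crystal this should follow from the row-local form of the operators in \cite{L}; for the \cite{KR} crystal it should follow from the facts that $\mathcal{A}^\mp=\lim_{\varepsilon\to0}\mathcal{A}_{\mu(\varepsilon)}$ contains the corresponding limit subalgebra $\mathcal{A}^\mp_{n-1}$ of $U(\mathfrak{g}_{n-1})$ and that the boundary stratum of the De Concini--Procesi space realizing this containment is precisely the one separating off $\mathfrak{g}_{n-1}$, so that by the monodromy construction of \cite{KR} the induced $\mathfrak{g}_{n-1}$-crystal on each summand of $V_\lambda=\bigoplus_\mu V_\lambda^\mu\otimes V_\mu$ is $B_\mu$. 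Granting this, the induction hypothesis matches the two $\mathfrak{g}_{n-1}$-crystal structures on $\mathcal{P}_\lambda$, and it remains to match the operators attached to the ``new'' simple roots of $\mathfrak{g}_n$, i.e. the one-step case.

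For the one-step case one uses Theorem~B directly. It identifies the eigenbasis of $\mathcal{A}^\mp$ inside a multiplicity space $V_\lambda^\mu\cong\bigotimes_j L(\alpha_{nj},\beta_{nj})\,(\otimes\,W)$ with the product of the $\mathfrak{sl}_2$-weight bases of the $V_{\alpha_{nj}-\beta_{nj}}$, via parallel transport along a path in the family of Bethe subalgebras $\mathcal{B}^\mp(z_1,\dots,z_k)$. The key input to be extracted from \cite{KR} is that, on this degenerate fibre, the $\mathfrak{g}_n$-crystal operators are given in the coordinates $\lambda'_{nj}$ (together with $\sigma_n$) by the usual signature rule for the tensor product of the $\mathfrak{sl}_2$-crystals $B_{\alpha_{nj}-\beta_{nj}}$ with the one-dimensional crystal coming from $W$. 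Once this is in hand, what remains is the purely combinatorial verification that Littelmann's lowering operators from \cite{L}, written in these same coordinates, obey the identical signature rule; this must be carried out separately for $\mathfrak{sp}_{2n}$ and for the integer and half-integer branches of $\mathfrak{o}_{2n+1}$, and it is here that the betweenness inequalities, the role of $\sigma_k$, and the constraint $\lambda'_{k1}\le-1$ enter.

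The principal obstacle is the analytic half of the one-step case: turning the schematic ``\cite{KR} implies'' into a precise statement that the monodromy crystal of the Bethe family of $\mathrm{Y}^\mp(2)$ degenerates, on the boundary fibre picked out by Theorem~B, to the standard tensor-product crystal. This amounts to controlling the eigenlines of $\mathcal{B}^\mp(z_1,\dots,z_k)$ near that boundary and identifying the resulting wall-crossing bijections with Kashiwara operators --- morally a $\mathrm{Y}^\mp(2)$-analogue of the comparison between cactus-group monodromy and the internal cactus action on crystals. A preliminary necessity is to settle Conjecture~\ref{cj:SimpleSpec}, or at least to fix one admissible path, so that the indexing of Theorem~B, hence the bijection being tested, is genuinely canonical. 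By comparison the combinatorial matching against Littelmann's rules is expected to be routine but laborious.
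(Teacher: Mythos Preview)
The paper does not prove this statement: it is stated as an open \emph{Conjecture} and no argument is offered. So there is no ``paper's own proof'' to compare against. What you have written is not a proof either, but an outline of a possible approach, and you are candid about this: you flag that the analytic half of the one-step case (identifying the monodromy crystal of the $\mathrm{Y}^\mp(2)$ Bethe family with the tensor-product crystal at the degenerate boundary) is unproven, that Conjecture~\ref{cj:SimpleSpec} is a prerequisite, and that the combinatorial matching against Littelmann's operators is still to be done.

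Your strategy is reasonable as a programme. The branching reduction to the one-step case is natural and should work as stated. The real content, as you say, is the $\mathrm{Y}^\mp(2)$ analogue of the cactus/monodromy comparison from \cite{KR}; nothing in the present paper supplies this, and it is genuinely new work. Until that is established (and until the indexing is shown to be path-independent, which is exactly Conjecture~\ref{cj:SimpleSpec}), the argument remains conditional. So the honest summary is: the paper leaves this open, and your proposal correctly identifies where the difficulty lies but does not resolve it.
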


\newpage
\addcontentsline{toc}{section}{References}
\bibliographystyle{halpha}
\bibliography{BC_case_bibliography}

\begin{thebibliography}{HKRW17}

\bibitem[GMR17]{ansatz}
A.~{Gerrard}, N.~{MacKay}, and V.~{Regelskis}.
\newblock {Nested algebraic Bethe ansatz for open spin chains with even twisted
  Yangian symmetry}.
\newblock {\em ArXiv e-prints}, October 2017, \arxiv{1710.08409}.

\bibitem[HKRW17]{KR}
I.~{Halacheva}, J.~{Kamnitzer}, L.~{Rybnikov}, and A.~{Weekes}.
\newblock {Crystals and monodromy of Bethe vectors}.
\newblock {\em ArXiv e-prints}, August 2017, \arxiv{1708.05105}.

\bibitem[Kno94]{Kn}
Friedrich Knop.
\newblock A {H}arish-{C}handra homomorphism for reductive group actions.
\newblock {\em Ann. of Math. (2)}, 140(2):253--288, 1994.

\bibitem[Kos63]{K}
Bertram Kostant.
\newblock Lie group representations on polynomial rings.
\newblock {\em Amer. J. Math.}, 85:327--404, 1963.

\bibitem[Lit98]{L}
P.~Littelmann.
\newblock Cones, crystals, and patterns.
\newblock {\em Transform. Groups}, 3(2):145--179, 1998.

\bibitem[MF79]{MF}
A.~S. Mishchenko and A.~T. Fomenko.
\newblock Integrability of {E}uler's equations on semisimple {L}ie algebras.
\newblock {\em Trudy Sem. Vektor. Tenzor. Anal.}, (19):3--94, 1979.

\bibitem[MNO96]{MNO}
A.~Molev, M.~Nazarov, and G.~Olshanski.
\newblock Yangians and classical {L}ie algebras.
\newblock {\em Uspekhi Mat. Nauk}, 51(2(308)):27--104, 1996.

\bibitem[MO00]{MO}
Alexander Molev and Grigori Olshanski.
\newblock Centralizer construction for twisted {Y}angians.
\newblock {\em Selecta Math. (N.S.)}, 6(3):269--317, 2000.

\bibitem[Mol06]{M}
A.~I. Molev.
\newblock Gelfand-{T}setlin bases for classical {L}ie algebras.
\newblock In {\em Handbook of algebra. {V}ol. 4}, volume~4 of {\em Handb.
  Algebr.}, pages 109--170. Elsevier/North-Holland, Amsterdam, 2006.

\bibitem[MY17]{MY}
A.~{Molev} and O.~{Yakimova}.
\newblock {Quantisation and nilpotent limits of Mishchenko-Fomenko
  subalgebras}.
\newblock {\em ArXiv e-prints}, November 2017, \arxiv{1711.03917}.

\bibitem[NO96]{NO}
Maxim Nazarov and Grigori Olshanski.
\newblock Bethe subalgebras in twisted {Y}angians.
\newblock {\em Comm. Math. Phys.}, 178(2):483--506, 1996.

\bibitem[Ryb05]{R2}
L.~G. Rybnikov.
\newblock Centralizers of some quadratic elements in {P}oisson-{L}ie algebras
  and a method for the translation of invariants.
\newblock {\em Uspekhi Mat. Nauk}, 60(2(362)):173--174, 2005.

\bibitem[Ryb06]{R}
L.~G. Rybnikov.
\newblock The shift of invariants method and the {G}audin model.
\newblock {\em Funktsional. Anal. i Prilozhen.}, 40(3):30--43, 96, 2006.

\bibitem[Shu02]{Sh}
V.~V. Shuvalov.
\newblock On the limits of {M}ishchenko-{F}omenko subalgebras in {P}oisson
  algebras of semisimple {L}ie algebras.
\newblock {\em Funktsional. Anal. i Prilozhen.}, 36(4):55--64, 2002.

\bibitem[Tar02]{T}
A.~A. Tarasov.
\newblock The maximality of some commutative subalgebras in {P}oisson algebras
  of semisimple {L}ie algebras.
\newblock {\em Uspekhi Mat. Nauk}, 57(5(347)):165--166, 2002.

\bibitem[Vin90]{V1}
E.~B. Vinberg.
\newblock Some commutative subalgebras of a universal enveloping algebra.
\newblock {\em Izv. Akad. Nauk SSSR Ser. Mat.}, 54(1):3--25, 221, 1990.

\end{thebibliography}

\footnotesize{
{\bf L.R.}: National Research University
Higher School of Economics, Russian Federation,\\
Department of Mathematics, 6 Usacheva st, Moscow 119048;\\
Institute for Information Transmission Problems of RAS;\\
{\tt leo.rybnikov@gmail.com}}

\footnotesize{
{\bf M.Z.}: National Research University
Higher School of Economics, Russian Federation,\\
Department of Mathematics, 6 Usacheva st, Moscow 119048;\\
{\tt zavalin.academic@gmail.com}}

\end{document}